\title{Almost split sequences for Kn\"{o}rr lattices}
\author{Andrew Poulton}
\date{}
\newtheorem{thm}{Theorem}[section]
\newtheorem{prop}[thm]{Proposition}
\newtheorem{cor}[thm]{Corollary}
\newtheorem{lem}[thm]{Lemma}
\newtheorem*{thm*}{Theorem}
\theoremstyle{definition}
\newtheorem{defn}{Definition}
\newtheorem{eg}[thm]{Example}
\theoremstyle{remark}
\newtheorem*{rmk}{Remark}
\renewcommand{\O}{\ensuremath{\mathcal{O}}}
\newcommand{\og}{\ensuremath{\mathcal{O}G}}
\renewcommand{\hom}{\ensuremath{\mbox{Hom}}}
\newcommand{\phom}{\ensuremath{\mbox{PHom}}}
\newcommand{\shom}{\ensuremath{\mbox{\underline{Hom}}}}
\newcommand{\catt}{\ensuremath{\mathcal{T}}}
\renewcommand{\mod}{\ensuremath{\mbox{\textbf{mod}}}}
\newcommand{\Mod}{\ensuremath{\mbox{\textbf{Mod}}}}
\newcommand{\modp}{\ensuremath{\mbox{\textbf{mod}}_{\mathcal{P}}}}
\newcommand{\stab}{\ensuremath{\underline{\mbox{\textbf{mod}}}}}
\newcommand{\slat}{\ensuremath{\underline{\mbox{\textbf{latt}}}}}
\newcommand{\im}{\ensuremath{\mbox{Im}}}
\newcommand{\cats}{\ensuremath{\mathcal{S}}}
\newcommand{\Z}{\ensuremath{\mathbb{Z}}}
\newcommand{\N}{\ensuremath{\mathbb{N}}}
\renewcommand{\L}{\ensuremath{\Lambda}}
\renewcommand{\ker}{\ensuremath{\mbox{Ker}}}
\newcommand{\End}{\ensuremath{\mbox{End}}}
\newcommand{\send}{\ensuremath{\underline{\mbox{End}}}}
\newcommand{\rad}{\ensuremath{\mbox{Rad}}}
\newcommand{\soc}{\ensuremath{\mbox{Soc}}}
\newcommand{\he}{\ensuremath{\mbox{Head}}}
\newcommand{\tr}{\ensuremath{\mbox{Trace}}}
\newcommand{\coker}{\ensuremath{\mbox{Coker}}}
\newcommand{\tor}{\ensuremath{\mbox{Tor}}}
\newcommand{\ok}{\ensuremath{\Omega_{kG}}}
\newcommand{\oo}{\ensuremath{\Omega_{\og}}}
\newcommand{\rk}{\ensuremath{\mbox{Rank}}}
\newcommand{\ol}{\ensuremath{\Omega_{\Lg}}}
\renewcommand{\r}{\ensuremath{\rightarrow}}
\renewcommand{\u}{\ensuremath{\!\uparrow}}
\renewcommand{\d}{\ensuremath{\!\!\downarrow}}
\mathchardef\md="2D
\newcommand{\OG}{\ensuremath{\mod\md\og}}
\newcommand{\Lg}{\ensuremath{\Lambda G}}
\newcommand{\sG}{\ensuremath{\stab\md\og}}
\newcommand{\sGl}{\ensuremath{\stab\md\Lg}}
\newcommand{\slG}{\ensuremath{\slat\md\og}}
\newcommand{\lslG}{\ensuremath{\slat\md\Lg}}
\newcommand{\lG}{\ensuremath{\underline{\mbox{\textbf{Mod}}}}\md\Lg}
\newcommand{\ldk}{\ensuremath{D^b(\mod\md\Lg)/K^b(P_{\Lg})}}
\newcommand{\ep}{\ensuremath{\varepsilon}}
\renewcommand{\L}{\ensuremath{\Lambda}}
\begin{document}
\maketitle
\begin{abstract}
Let $\O$ be a complete discrete valuation ring with maximal ideal $(\pi)$ and residue field $k=\O/\pi\O$, $G$ a finite group, and $\og$ the corresponding group algebra.

We give necessary and sufficient conditions for the middle term of an almost split sequence terminating in Kn\"{o}rr lattice to be indecomposable (Theorem \ref{mdterm}).  The main tool we use is an adjunction in the stable category of $\og$ (Theorem \ref{kag}), which is hopefully of independent interest.  As a second application of this adjunction, we give a characterisation of the stable endomorphism rings of Heller lattices of $kG$-modules when $\O$ is ramified.  \end{abstract}
\section{Introduction}
Let $\{K,\O,k\}$ be a $p$-modular system for a finite group $G$, so that $\O$ is a complete discrete valuation ring with maximal ideal $J(\O)=(\pi)$, residue field $k=\O/\pi\O$ characteristic $p>0$ and field of fractions $K$. Let $\nu:\O\r \N$ denote the $\pi$-adic valuation on $\O$.

The present paper is concerned with presenting two applications of a functor $R:\sG\r\slG$, where $\sG$ is the stable category of the Frobenius category of $\og$-modules with respect to the $\O$-split exact structure, and $\slG$ is the full triangulated subcategory of $\sG$ consisting of $\og$-lattices.  This functor is a right adjoint to the inclusion functor $\slG\r\sG$ (see Theorem \ref{kreg}).

The first application deals with almost split sequences for Kn\"{o}rr lattices.  Recall that a \emph{Kn\"{o}rr lattice} $M$ (\emph{virtually irreducible} lattice in \cite{knorr}) is a lattice satisfying 
\[\nu(\tr(\alpha))\geq\nu(\tr(\rk_\O(M)))\]
for all $\alpha\in\End_{\og}(M)$ (with equality if and only if $\alpha$ is an automorphism), where $\tr:\End_{\O}(M)\r\O$ is the ordinary trace map.  Examples of Kn\"{o}rr lattices include irreducible lattices and absolutely indecomposable lattices of rank coprime to $p$. Further, R. Kn\"{o}rr showed that all Kn\"{o}rr lattices are absolutely indecomposable.  Recall that Carlson and Jones \cite{C-J} define the \emph{exponent} $\exp(M)$ of an $\og$-lattice $M$ to be $\pi^a$ if $\pi^aId_M$ factors through a projective module, but $\pi^{a-1}Id_M$ does not.  It is shown in \cite{C-J} that Kn\"{o}rr lattices are the absolutely indecomposable lattices $M$ such that $\pi^{a-1}Id_M$ generates the socle of $\send(M)$, the endomorphism algebra of $M$ as an object in $\slG$.

If $M$ is a Kn\"{o}rr lattice of exponent $\pi^a$ and $\Delta$ is the connected component of the Auslander-Reiten quiver of $\og$ containing $M$, then recent papers by Geline and Mazza \cite{gm} and Kawata \cite{kaw4} give sufficient conditions for $M$ to lie on the end of $\Delta$.  To be precise, \cite[Theorem 1.1]{gm} shows that $M$ is at the end of $\Delta$ provided $M/\pi M$ is indecomposable and $a>1$, whereas \cite[Theorem 3.1]{kaw4} implies that $M$ is at the end of $\Delta$ provided $\exp(M)=\exp(\O)$ where $\O$ is the trivial lattice.  We provide here necessary and sufficient conditions, in a similar vein those above.  Namely, we prove:
\begin{thm} \label{comeon}
Let $M$ and $\Delta$ be as above, and suppose $M$ has exponent $\pi^a$.  Then $M$ lies at the end of $\Delta$ if and only if $M/\pi^{a-1}M$ is indecomposable.
\end{thm}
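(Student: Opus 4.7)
The plan is to use the adjunction $R: \sG \rr \slG$ of Theorem \ref{kreg} to translate the question about the middle term $E$ of the almost split sequence
\[0\rr \tau M\rr E\rr M\rr 0\]
in $\LOG$ into a question about the $\og$-module $M/\pi^{a-1}M$, which is a non-lattice object of $\sG$. Throughout we use the standard fact that $M$ lies at the end of $\Delta$ if and only if $E$ is indecomposable.

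The starting observation is that in the triangulated category $\sG$ the cone of the endomorphism $\pi^{a-1}\colon M\rr M$ is isomorphic to $M/\pi^{a-1}M$, since the map is $\O$-injective on the lattice $M$, yielding a distinguished triangle
\[M\xrightarrow{\pi^{a-1}} M\rr M/\pi^{a-1}M\rr \Omega^{-1}M.\]
By the Kn\"orr property, $\pi^{a-1}\mathrm{Id}_M$ generates the simple socle of $\send(M)$, and via Auslander--Reiten duality this socle element corresponds (up to a scalar) to the connecting morphism of the almost split sequence ending in $M$. Consequently the triangle above should be a rotation of the AR-triangle
\[\tau M \rr E \rr M \xrightarrow{\delta} \Omega^{-1}\tau M\]
of $M$ in $\slG$.

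Since $R$ is triangulated (being a right adjoint to a triangulated fully faithful inclusion) and satisfies $R\circ i \cong \mathrm{Id}_{\slG}$, applying $R$ to the $\sG$-triangle yields a $\slG$-triangle
\[M\xrightarrow{\pi^{a-1}} M\rr R(M/\pi^{a-1}M)\rr \Omega^{-1}M.\]
The central claim is then that $R(M/\pi^{a-1}M)$ is stably isomorphic to $E$. Granting this, additivity of $R$ combined with Krull--Schmidt produces a bijection between indecomposable summands of $M/\pi^{a-1}M$ in $\sG$ and non-projective indecomposable summands of $E$ in $\slG$, and the theorem follows at once.

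The main obstacle is the central claim, whose proof reduces to identifying the connecting morphism $M\rr \Omega^{-1}M$ of the triangle produced above with the AR-class of $M$, so that the rotated cone is literally $E$. This matching uses the Kn\"orr characterisation of the socle of $\send(M)$ together with the explicit form of Auslander--Reiten duality inside $\slG$, and is where the adjunction machinery of Theorem \ref{kreg} does the genuine work. A minor secondary point is the boundary case $a=1$, in which $M/\pi^{a-1}M=0$ and the $\sG$-triangle degenerates: here the claim is that $M$ always lies at the end of $\Delta$, which recovers Kawata's result \cite{kaw4}, and can be folded into the argument by observing that the cone of $\mathrm{Id}_M$ is zero.
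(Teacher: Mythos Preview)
Your identification of the middle term via the triangle $M\xrightarrow{\pi^{a-1}}M\to M_{a-1}\to$ is essentially the paper's route (Jones' pullback, Lemma~\ref{j-lad}), though you are off by a shift: comparing rotations of the AR-triangle gives $R(M_{a-1})\simeq\Omega^{-1}E$, not $E$. That is harmless. The real gap is the sentence ``additivity of $R$ combined with Krull--Schmidt produces a bijection between indecomposable summands''. Additivity only gives the easy direction (if $M_{a-1}$ decomposes then so does $R(M_{a-1})$, once one checks via Lemma~\ref{kerlem} that no summand lies in $\ker R$). The hard direction---that $R$ preserves the indecomposability of $M_{a-1}$---is not formal: $R$ is not fully faithful on non-lattices, since by adjunction $\send_{\slG}(RX)\cong\shom_{\sG}(RX,X)$ is typically strictly larger than $\send_{\sG}(X)$, and indeed Proposition~\ref{tooeasy} exhibits simple modules $S$ with $RS$ decomposable. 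This direction is exactly the content of Proposition~\ref{aindec}, and its proof is not soft: it uses the splitting $R(M_{a-1})_{a-1}\cong M_{a-1}\oplus\oo^{-1}(M)_{a-1}$ (Lemma~\ref{j-lad}), the matrix form of induced endomorphisms (Lemma~\ref{msplaced2}), and the two-dimensionality of the socle of the stable endomorphism ring coming from its symmetry.

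Your treatment of $a=1$ is also incomplete. The triangle does give $R(M_0)\simeq 0$, so $E$ is projective in the module category; but projective does not mean indecomposable. One must still show $E$ is an \emph{indecomposable} projective. The paper does this in Proposition~\ref{sky}, which rests on Proposition~\ref{tooeasy}: a Kn\"orr lattice of exponent $\pi$ is forced to have simple head, so its projective cover---which is then the right-almost-split map---has indecomposable source.
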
  
The case when $a=1$ follows from Proposition \ref{tooeasy}, which shows how restrictive the existence of a Kn\"{o}rr lattice of exponent $\pi$ in a given block is.  This extends a result of Jones, Kawata and Michler \cite[Theorem 2.1]{jkm}.   

The second application is to stable endomorphism rings of Heller lattices of $kG$-modules.  We remind the reader that if $X$ a $kG$-module, then the Heller lattice of $X$ is the kernel $\oo X$ of an $\og$-projective cover $P(X)\r X$.  We show the following:
\begin{thm} \label{blahblat}
Suppose $(p)\subseteq (\pi^2)$, and let $X$ be a $kG$-module.  Then $\send(\oo M)$ is isomorphic to the trivial extension algebra of $\send(X)$.
\end{thm}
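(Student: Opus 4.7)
The plan is to exploit the adjunction $R : \sG \to \slG$ of Theorem~\ref{kag} in concert with the canonical short exact sequence
\[
0 \to X \to \oo X / \pi\oo X \to \ok(X) \to 0 \qquad (\star)
\]
in $\mod\md kG$, obtained by applying the snake lemma to multiplication by $\pi$ on the $\og$-projective cover sequence $0 \to \oo X \to P(X) \to X \to 0$. Since each term of $(\star)$ is a $kG$-module, the sequence is $\O$-split and therefore a distinguished triangle in $\sG$.

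The first step is to apply $\hom_{\sG}(\oo X, -)$ to this triangle and to combine the identification $\hom_{\sG}(L, Y) \cong \shom_{kG}(L/\pi L, Y)$, valid for any lattice $L$ and any $kG$-module $Y$, with the adjunction $\hom_{\sG}(\oo X, Y) \cong \hom_{\slG}(\oo X, R(Y))$. I expect the identification $R(\oo X/\pi\oo X) \cong \oo X$ (coming from the natural surjection $\oo X \twoheadrightarrow \oo X/\pi\oo X$ exhibiting the unit of the adjunction at $\oo X/\pi\oo X$), so that the middle term of the triangle recovers $\send_{\og}(\oo X)$. Combining with Auslander--Reiten (Tate) duality for the symmetric algebra $kG$ to identify the remaining terms, the long exact sequence should collapse to
\[
0 \to D\send_{kG}(X) \to \send_{\og}(\oo X) \to \send_{kG}(X) \to 0,
\]
in which the right-hand map sends $\phi \in \send_{\og}(\oo X)$ to the $kG$-endomorphism of $\ok(X)$ obtained by reducing $\phi$ modulo $\pi$ and passing to the quotient in $(\star)$, identified with $\send_{kG}(X)$ via the auto-equivalence $\ok$. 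This map is a $k$-algebra homomorphism, and its kernel is a square-zero ideal by a direct chase using the fact that $\pi\cdot\mathrm{id}_{\oo X}$ is zero in $\send_{\og}(\oo X)$.

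To upgrade this to an isomorphism with the trivial extension $T(\send_{kG}(X))$, I need to produce a $k$-algebra splitting, and this is precisely where the hypothesis $(p)\subseteq(\pi^2)$ becomes indispensable. I would construct the section by lifting each $\alpha\in\End_{kG}(X)$ to a map $\tilde\alpha : P(X)\to P(X)$ using projectivity of $P(X)$ and restricting to $\oo X$; two such lifts differ by a map $P(X) \to \oo X$, and the resulting ambiguity in $\send_{\og}(\oo X)$ is controlled by an obstruction class living in an $\O/\pi\O$-module that vanishes precisely when $p\in\pi^2\O$. The main obstacle is this final step: verifying that the ramification hypothesis is exactly what guarantees a canonical multiplicative section, together with the bimodule identification of the kernel with $D\send_{kG}(X)$ via the Auslander--Reiten duality above. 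Without the condition $(p)\subseteq(\pi^2)$ the extension can fail to split, so the delicacy lies in locating the precise place where a stronger ramification would be needed.
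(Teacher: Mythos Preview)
Your central claim, that $R(\overline{\oo X})\cong\oo X$, is false, and this breaks the argument.  By Proposition~\ref{rk} (which uses the ramification hypothesis), $\overline{\oo X}\cong X\oplus\ok X$ as $kG$-modules, so that $R(\overline{\oo X})\simeq RX\oplus R(\ok X)\simeq RX\oplus\Omega RX\simeq\oo^{-1}(\oo X)\oplus\oo X$ in $\slG$.  Thus $R(\overline{\oo X})$ has \emph{two} indecomposable summands, not one, and the surjection $\oo X\twoheadrightarrow\overline{\oo X}$ corresponds under the adjunction only to the inclusion of one of them.  Equivalently, using the $(-\otimes k)$-adjunction directly, the middle term of your long exact sequence is
\[
\shom_{\sG}(\oo X,\overline{\oo X})\;\cong\;\send_{kG}(\overline{\oo X})\;\cong\;\send_{kG}(X\oplus\ok X),
\]
which is strictly larger than $\send_{\og}(\oo X)$ in general (for $G=C_p$, $p\geq 3$, $X=k$ it is four-dimensional while $\send_{\og}(\oo k)\cong k[\varepsilon]/(\varepsilon^2)$ is two-dimensional).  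The outer terms of the triangle suffer the same inflation: $\shom_{\sG}(\oo X,X)\cong\shom_{kG}(X\oplus\ok X,X)$ is not $D\send_{kG}(X)$, and similarly on the right.  So the short exact sequence you write down does not arise from $(\star)$ in the way you describe.

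The paper does not produce that extension sequence abstractly; instead it works inside $\send_{kG}(\overline{RX})=\send_{kG}(X\oplus\ok^{-1}X)$ and identifies the image of the injective reduction map $\send_{\og}(RX)\hookrightarrow\send_{kG}(\overline{RX})$ explicitly.  Lemma~\ref{misplaced2} shows that $R(\varphi)\otimes k$ has diagonal entries $\varphi$ and $\ok^{-1}\varphi$, giving a $\dim_k\send(X)$-dimensional subspace of the image; a construction due to Kawata produces, for each $\mu\in\shom(\ok^{-1}X,X)$, an endomorphism $\hat\mu$ of $RX$ with $\hat\mu\otimes k=\left(\begin{smallmatrix}0&\mu\\0&0\end{smallmatrix}\right)$, and this is where the ramification hypothesis is actually used.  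A dimension count then shows these two families span $\send_{\og}(RX)$, and a direct matrix computation verifies that the multiplication agrees with that of $T\send_{kG}(X)$ after identifying $\shom(\ok^{-1}X,X)\cong D\send(X)$ via Auslander--Reiten duality.  Your idea of lifting $\alpha\in\End_{kG}(X)$ through $P(X)$ is essentially the content of Lemma~\ref{misplaced2}, but the complementary Kawata piece has no analogue in your outline, and it is precisely this piece that accounts for the missing half of the dimension.
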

Previously, Kawata \cite[Theorem 2.9]{kaw2} has shown that $\oo X$ is indecomposable when $X$ is non-projective indecomposable and $(p)\subseteq {\pi^3}$.  It follows from Theorem \ref{blahblat} and Proposition \ref{hirsuite} that Heller lattices of non-projective indecomposable $kG$ modules are decomposable when $(p)\subseteq (\pi^2)$.

It turns out that the existence of the functor $R$ requires the fact that $\O$ is noetherian of finite global dimension (i.e. $\O$ is \emph {regular}), and thus we obtain the following result, which is hopefully of independent interest:

\begin{thm} \label{kreg}
Let $\L$ be a regular ring, and let $\slat\md\L G$ denote the full subcategory of $\stab\md \L G$ consisting of $\L G$-modules that are projective as $\L$-modules.  Then the inclusion $\slat\md\L G\r\stab\md \L G$ admits a right adjoint $R:\stab\md \L G\r\slat\md \L G$. 
\end{thm}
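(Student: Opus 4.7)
Set $d := \mathrm{gldim}(\L)$, which is finite by regularity of $\L$.  Given $M \in \stab\md\L G$, choose a finitely generated $\L G$-projective resolution $\cdots \r P_1 \r P_0 \r M \r 0$ and let $N$ denote its $d$-th $\L G$-syzygy.  Each $P_i$, being $\L G$-projective, is $\L$-projective, so the truncated sequence
\[0 \r N \r P_{d-1} \r \cdots \r P_0 \r M \r 0\]
exhibits $M$ as having a length-$d$ resolution by $\L$-projectives; since $\mathrm{gldim}(\L)=d$, standard dimension-shifting forces $N$ to be $\L$-projective.  Thus $N$ is a lattice.

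Next observe that the syzygy functor $\Omega$ on $\stab\md\L G$ restricts to an auto-equivalence of $\slat\md\L G$: for a lattice $X$ the $\L$-split sequence $0 \r \Omega X \r \L G \otimes_\L X \r X \r 0$ has $\L G \otimes_\L X$ and $X$ both lattices, so $\Omega X$ is a $\L$-summand of a lattice and hence itself a lattice, and the analogous argument handles $\Omega^{-1}$.  Define $R(M) := \Omega^{-d} N$, which is thus a lattice.  To construct the counit $\ep_M : R(M) \r M$ in $\stab\md\L G$, compare $P_\bullet \r M$ with a $\L$-split resolution $B_\bullet \r M$ by relatively $\L$-projective modules --- for instance the standard bar resolution --- whose $d$-th syzygy represents $\Omega^d M$ in $\stab\md\L G$.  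Projectivity of the $P_i$ permits a lift of $\mathrm{id}_M$ to a chain map $P_\bullet \r B_\bullet$; its induced map on $d$-th syzygies is a $\L G$-morphism $N \r \Omega^d M$, and shifting by $\Omega^{-d}$ produces $\ep_M$.

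Standard homotopy-lifting arguments show that $R(M)$ is well-defined up to canonical isomorphism in $\slat\md\L G$, that $R$ extends to a functor, and that the unit $L \r R(L)$ is an isomorphism for $L$ a lattice (when $L$ is a lattice the $\L G$-projective cover is automatically $\L$-split, so the $\L G$-syzygy of $L$ agrees with $\Omega L$ in $\stab\md\L G$, and iterating yields $R(L) \cong L$).  The main obstacle is verifying the universal property of $\ep_M$: for every lattice $L$, post-composition with $\ep_M$ must induce a bijection $\hom_{\slat\md\L G}(L, R(M)) \r \hom_{\stab\md\L G}(L, M)$.  Transporting through the auto-equivalence $\Omega^d$, this becomes the assertion that the comparison $N \r \Omega^d M$ induces an isomorphism $\hom_{\stab\md\L G}(L'', N) \r \hom_{\stab\md\L G}(L'', \Omega^d M)$ for every lattice $L''$; equivalently, the cofiber of $N \r \Omega^d M$ in $\stab\md\L G$ is right-orthogonal to $\slat\md\L G$.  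I expect this to be the technical crux of the proof: the cofiber is controlled by the mapping cone of $P_\bullet \r B_\bullet$, an acyclic complex of $\L G$-modules, and a careful analysis should show that any lattice-valued obstruction to the factorization passes through a relatively $\L$-projective module and hence vanishes in $\stab\md\L G$.  Regularity of $\L$ enters essentially both in producing the lattice $N$ after $d$ steps and in the vanishing arguments needed to close this final step.
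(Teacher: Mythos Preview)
Your construction of $R$ as $\Omega^{-d}$ applied to the $d$-th projective syzygy is exactly the paper's, and your identification of the ``technical crux'' is accurate. However, you explicitly leave that crux unproven: the phrases ``I expect'' and ``a careful analysis should show'' are not a proof, and the heuristic about the mapping cone being acyclic does not by itself explain why lattice-valued maps into the cofiber vanish stably. This is the only real content of the theorem, so as written the proposal is a sketch rather than a proof.

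The paper closes this gap with a short, direct lemma (Lemma~3.1) that avoids any cofiber or orthogonality language: for a lattice $M$ and arbitrary $N$, the Heller operator induces an isomorphism
\[
\hom_{\modp\md\Lg}(M,N)\;\xrightarrow{\ \sim\ }\;\hom_{\modp\md\Lg}(\ol M,\ol N).
\]
The point you are missing is the construction of the inverse. Because $M$ is a lattice, the sequence $0\r\ol M\r P\r M\r 0$ is $\L$-split; hence any $f:\ol M\r\ol N$ extends to a map $P\r Q$ (here one uses that the projective $Q$ is \emph{weakly injective}, so $\L$-split monos into it extend), which then induces a well-defined class $M\r N$ modulo projectives. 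A short diagram chase checks injectivity. Iterating this isomorphism $d$ times and then applying the auto-equivalence $\Omega^{-d}$ yields exactly the adjunction bijection $\shom(L,RX)\cong\shom(L,X)$ for lattices $L$, which is your desired universal property of $\ep_X$. The paper also frames $R$ conceptually as the composite $\sGl\xrightarrow{F}\ldk\simeq\lslG$ through the singularity category, but the verification of the adjunction is done entirely via the elementary lemma above, not via that equivalence.
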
 



Throughout, an $\og$-module will mean a right $\og$-module and homomorphisms will act on the left, so that the composition $A\stackrel{\alpha}{\r}B\stackrel{\beta}{\r} C$ will be written $\beta \alpha$. 

\section*{Acknowledgements}
The author would like to thank his supervisor Jeremy Rickard for many patient and helpful discussions, especially with respect to Section 3.  The author is grateful for the financial support of the Engineering and Physical Sciences Research Council.  

\section{Stable categories over commutative rings}
We recall here the construction given in \cite{BIK} of the stable category of a group algebra over a commutative ring, and refer the reader to that paper for full details.  Let $G$ denote a finite group, $\L$ a commutative ring (with unity) and let $\Lg$ denote the corresponding group algebra.  When $\L$ is not a field, it is generally not the case that $\Lg$ is self-injective, and hence the usual way of constructing the stable category `modulo projectives' does not yield a triangulated category.  This is corrected by considering those modules that are projective relative to the trivial subgroup.  

An $\Lg$-module $M$ is said to be \emph{weakly injective} if $M$ is a summand of an induced module $N\otimes_{\L}\Lg$ (where $N$ is some $\L$-module).  This is equivalent to every $\L$-split monomorphism from $M$ splits over $\Lg$, and that every $\L$-split epimorphism to $M$ splits over $\Lg$.  It is shown in \cite{BIK} that for every $\Lg$-module $M$ there are natural $\L$-split monomorphisms $\iota_M:M\r M\otimes_{\L}\Lg$ and natural $\L$-split epimorphisms $\theta_M:M\otimes_{\L} \Lg \r M$.  In particular, the module category $\Mod\md\Lg$ is not only an \emph{exact category} with respect to the $\L$-split structure in the sense of Quillen \cite{qui}, but is in fact a \emph{Frobenius category}.  Happel \cite{hap} shows how to construct a triangulated `stable' category from a Frobenius category.

\begin{defn}
The \emph{stable category} $\lG$ of $\Lg$ has as its objects $\Lg$-modules, and the morphism spaces are the quotients 
 \[\hom_{\lG}(M,N):=\shom(M,N)=\hom_{\Lg}(M,N)/\phom_{\Lg}(M,N),\]
where $\phom_{\Lg}(M,N)$ is the space spanned by those morphisms factoring through a weakly injective module.  It is shown in \cite{BIK} that $\phom_{\Lg}(M,N)$ can be identified with the image $\hom_{\L}(M,N)_1^G$ of the \emph{trace map} $\mbox{Tr}_G:\hom_{\L}(M,N)\r \hom_{\Lg}(M,N)$ defined by $\mbox{Tr}_G(f)(m)=\sum_{g\in G}g(f(g^{-1}m))$ for $m\in M$.
\end{defn}
  The translation functor in $\lG$ will be denoted by $\Omega$, so that $\Omega(M)$ is isomorphic (in $\lG$) to the kernel of $\theta_M$.  If $M$ and $N$ are isomorphic in $\lG$, we denote this by $M\simeq N$.  Similarly, if $f$ and $f'$ are maps in $\Mod\md\Lg$ that are equivalent in $\lG$, we express this by $f\simeq f'$.  Given a ring $\Gamma$, we write $\Omega_{\Gamma}$ for the Heller operator on the category of $\Gamma$-modules.  We denote by $\lslG$ the full subcategory of $\lG$ consisting of finitely presented modules.

If $\Gamma$ is an arbitrary ring, we let $\modp\md\Gamma$ denote category of $\Gamma$-modules modulo projectives.  This has $\Gamma$-modules as objects, but now two morphisms $f,f'$ are identified in $\modp\md\Gamma$ if $f-f'$ factors through a projective $\Gamma$-module (provided the difference $f-f'$ makes sense).  The key reason for introducing this category, as will become apparent, is that the the Heller operator $\Omega_\Gamma$ defines an additive endofunctor on $\modp\md\Gamma$.  Further, while $\modp\md\Lg$ is not generally triangulated (just additive), since projective modules are also weakly injective, there is a canonical (additive) functor $\Psi:\modp\md\Lg\r\lG$.  

\section{An adjunction} \label{adj}
Now let $\L$ be a commutative noetherian ring with finite global dimension $n$ (so that $\L$ is \emph{regular}).  Let $\lslG$ denote the full triangulated subcategory of $\sGl$ consisting of $\Lg$ modules that are projective as $\L$-modules.  Objects in $\lslG$ will be referred to as \emph{$\Lg$-lattices}.  Since projective resolutions of $\Lg$-lattices are $\L$-split, it follows that each non-projective $\Lg$-lattice has infinite projective dimension.  Further, given an arbitrary $\Lg$-module $M$, the module $\ol^{n}M$ is an $\Lg$-lattice, since the exact sequence defining $\ol^{n}M$ is a projective resolution of $M$ as an $\L$-module.  In particular, all weakly injective modules have finite projective dimension.  

\begin{lem} \label{oboi}
Let $M$ be a lattice, and $N$ an arbitrary $\Lg$-module.  Then the map 
\[\Sigma:\hom_{\modp\md\Lg}(M,N)\tilde{\r} \hom_{\modp\md\Lg}(\ol M,\ol N)\]
induced by the Heller operator is an isomorphism (of $\L$-modules).
\end{lem}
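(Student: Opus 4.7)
The plan is to construct an explicit inverse to $\Sigma$ by lifting maps through chosen projective presentations and descending along the projection to $N$. Fix projective presentations $0\r \Omega_{\Lg}M\stackrel{\iota_M}{\r} P\stackrel{p}{\r} M\r 0$ and $0\r \Omega_{\Lg}N\stackrel{\iota_N}{\r} Q\stackrel{q}{\r} N\r 0$.

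The whole argument rests on the following vanishing: for any $\Lg$-projective module $Q$ one has $\ext^1_{\Lg}(M,Q)=0$. Indeed, any $\Lg$-projective $Q$ is a summand of some $\Lg^{n}\cong \L^n\otimes_{\L}\Lg$, hence is weakly injective; on the other hand, because $M$ is $\L$-projective, \emph{every} $\Lg$-extension of $M$ by $Q$ is automatically $\L$-split, and therefore splits over $\Lg$ by the characterisation of weak injectivity recalled in Section 2. Applying $\hom_{\Lg}(-,Q)$ to the presentation of $M$ and using this vanishing gives surjectivity of $\hom_{\Lg}(P,Q)\twoheadrightarrow\hom_{\Lg}(\Omega_\Lg M,Q)$.

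With this in hand, the inverse $\Sigma^{-1}$ is constructed as follows. Given $g:\Omega_{\Lg}M\r\Omega_{\Lg}N$, lift the composite $\iota_N g:\Omega_{\Lg}M\r Q$ to a map $\tilde g:P\r Q$. The composite $q\tilde g:P\r N$ vanishes on $\Omega_{\Lg}M$ because $\iota_N g$ has image in $\ker q$, and so descends to a unique map $f:M\r N$ with $fp=q\tilde g$. Set $\Sigma^{-1}[g]=[f]$. One checks, directly from the definition of $\Omega_{\Lg}f$ via a lift $P\r Q$ of $f$, that the lift $\tilde g$ is itself such a lift, so $\Omega_\Lg f=g$ on the nose; this yields $\Sigma\circ\Sigma^{-1}=\mathrm{id}$ and also $\Sigma^{-1}\circ\Sigma=\mathrm{id}$ by choosing $\tilde g$ to be an original lift when starting from $f$.

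The only real work is verifying that $[f]$ does not depend on the choices involved (the projective presentations, the lift $\tilde g$, and the representative $g$ of its class in $\modp\md\Lg$), and I expect this to be the main obstacle. Two lifts $\tilde g,\tilde g'$ differ by a map $P\r Q$ vanishing on $\Omega_\Lg M$, hence by a map factoring through $p:P\twoheadrightarrow M$; composing with $q$ gives a map $M\r N$ which factors through the projective $Q$, so contributes zero in $\modp\md\Lg$. If $g$ is altered by a map $\Omega_{\Lg}M\stackrel{\alpha}{\r} P'\stackrel{\beta}{\r}\Omega_{\Lg}N$ through a $\Lg$-projective $P'$, one uses the vanishing of Step 1 again to extend $\alpha$ across $\iota_M$ to $P\r P'$, after which the resulting contribution to $f$ manifestly factors through $P'$. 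Independence of the presentations is routine by a standard comparison argument. Naturality of $\Sigma^{-1}$ in both variables is then automatic, and the construction is clearly $\L$-linear, completing the proof.
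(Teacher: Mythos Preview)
Your proof is correct and follows essentially the same route as the paper's: both construct an explicit inverse to $\Sigma$ by lifting a map $g:\ol M\to\ol N$ through the projective $Q$ to a map $P\to Q$ and then descending along the cokernels, with the checks of well-definedness being virtually identical. The only cosmetic difference is that the paper justifies the existence of the lift $P\to Q$ by saying directly that the presentation of $M$ is $\L$-split and $Q$ is weakly injective, whereas you package the same observation as the vanishing $\ext^1_{\Lg}(M,Q)=0$; and where the paper argues surjectivity-plus-injectivity of $\Sigma$, you verify $\Sigma\Sigma^{-1}=\mathrm{id}$ and $\Sigma^{-1}\Sigma=\mathrm{id}$ and check independence of the representative of $[g]$ separately.
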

\begin{proof}
Consider the following commutative diagram where $P$ and $Q$ are projective and the rows are exact:
\[
\xymatrix{
0\ar[r] & \ol M \ar[r]^{\alpha_M} \ar[d]^{\ol \psi} & P \ar[r]^{\pi_M}\ar[d] &  M \ar[r]\ar[d]^{\psi} & 0 \\
0\ar[r] & \ol N \ar[r]^{\alpha_N} &  Q \ar[r]^{\pi_N} &  N \ar[r] & 0.
}
\]
Here, the map $P\r Q$ is any map lifting $\psi$, and then by definition $\ol\psi$ is any map making the diagram commute.  Since $\ol$ is an endofunctor on $\modp\md\Lg$, the mapping $\psi\mapsto \ol \psi$ is well defined, so that a different lifting of $\psi$ results in a morphism equivalent to $\ol \psi $ in $\modp\md\Lg$.

We define an inverse to $\Sigma$ as follows.  As $M$ is a lattice, the top row splits over $\L$, and since $Q$ is projective (and thus weakly injective), any map $f:\ol M \r \ol N$ lifts to a map $f':P\r Q$ such that $\alpha_N\circ f=f'\circ \alpha_M$.  Then $f'$ induces a map $f'':M\r N$  such that $f''\circ\pi_M=\pi_N\circ f''$.  The assignment $\Sigma^{-1}:f\mapsto f''$ will be the inverse in question (it is clearly $\L$-linear).  We claim this is a well-defined map.  Indeed, suppose $\alpha, \beta:P\r Q$ are distinct lifts of $f$ and suppose they induce $a,b: M\r N$ respectively.  Then $(\alpha-\beta)\alpha_M=0$, so $\alpha-\beta=\gamma\pi_M$ for some $\gamma:M\r Q$.  Thus $(a-b)\pi_M=\pi_N(\alpha-\beta)=\pi_N\cdot\gamma\cdot\pi_M$.  It follows that $a-b=\pi_N\gamma$ is projective.  Thus different lifts of $f$ induce equivalent morphisms $M\r N$ in $\modp\md\Lg$, and $\Sigma^{-1}$ is well defined.  We observe immediately that $\Sigma$ is surjective.

Finally, if $\ol\psi$ is projective, we have $\ol\psi=\theta\circ \alpha_M$ for some $\theta:P\r\ol N$.  Then $\alpha_N\circ\theta:P\r Q$ lifts $\ol\psi$, and it follows (since $\Sigma^{-1}$ is well defined) that $\psi$ is therefore projective.  Hence $\ol$ is injective.
\end{proof}

Let $D^b(\mod\md\Lg)$ denote the bounded derived category of $\mod\md\Lg$, and $K^{-,b}(P_{\Lg})$ the homotopy category of bounded above complexes of finitely generated projective modules with bounded homology.  Recall that there is an equivalence $K^{-,b}(P_{\Lg})\stackrel{\sim}{\r} D^b(\mod\md\Lg)$, and via this equivalence we identify the thick subcategory of perfect complexes, $K^b(P_{\Lg})$, of $K^{-,b}(P_{\Lg})$ with a thick subcategory of $D^b(\mod\md\Lg)$.  Thus the Verdier quotient $\ldk$ exists.

We have an obvious functor.  Let $F':\mod\md\Lg\r\ldk$ be the composition of the natural functor $\mod\md\Lg\r D^b(\Lg)$ and the quotient functor.  The kernel of this functor contains precisely the $\Lg$-modules of finite projective dimension (which includes the weakly injective modules), so $F'$ factors through the stable category, yielding a functor $F:\sGl\r\ldk$.  Observe that $F$ is exact, since triangles $X\r Y\r Z \r \Omega^{-1}X$ in $\sGl$ come from pushout diagrams
\[
\xymatrix{
0 \ar[r] & X \ar[r]\ar[d] & X\otimes \Lg \ar[r]\ar[d] & \Omega^{-1}X\ar[r] \ar@{=}[d] & 0\\
0 \ar[r] & Y \ar[r] & Z \ar[r] & \Omega^{-1}X \ar[r] & 0.
}
\] 

Rickard \cite[Theorem 2.1]{rik} has shown that $F$ is an equivalence when $\L$ is a field.

\begin{lem} \label{brap}
$F$ is essentially surjective.
\end{lem}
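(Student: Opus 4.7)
The plan is to show that every object $X$ of $\ldk$ is isomorphic to $F(M)$ for some $\Lg$-module $M$.

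First I would use the equivalence $K^{-,b}(P_{\Lg}) \stackrel{\sim}{\rr} D^b(\mod\md\Lg)$ recorded above to represent $X$ by a bounded above complex $P^\bullet$ of finitely generated projective $\Lg$-modules with bounded homology. After a shift I may assume $P^i=0$ for $i>0$ and $H^i(P^\bullet)=0$ for $i<-r$ for some $r\geq 0$.

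The crucial step is a brutal truncation. For any integer $N\geq r$, there is a short exact sequence of complexes
\[0 \r \sigma^{\leq -N-1} P^\bullet \r P^\bullet \r \sigma^{\geq -N} P^\bullet \r 0.\]
The quotient $\sigma^{\geq -N} P^\bullet$ is a bounded complex of finitely generated projectives, hence lies in $K^b(P_{\Lg})$ and becomes zero in $\ldk$. The associated triangle therefore gives an isomorphism $\sigma^{\leq -N-1} P^\bullet \cong X$ in $\ldk$. By the choice of $N$, the cohomology of $\sigma^{\leq -N-1} P^\bullet$ is concentrated in the single degree $-N-1$, where it equals the cokernel $M:=P^{-N-1}/\im(d^{-N-2})$. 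Canonical truncation then produces a quasi-isomorphism $\sigma^{\leq -N-1} P^\bullet \stackrel{\sim}{\rr} M[N+1]$, so $X \cong M[N+1]$ in $\ldk$.

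It remains to identify $M[N+1]$ with $F$ applied to some $\Lg$-module. Regularity of $\L$ is indispensable here: each weakly injective $\Lg$-module is a summand of some $V \otimes_{\L} \Lg$, and since $V$ has finite projective dimension as an $\L$-module, $V \otimes_{\L} \Lg$ has finite projective dimension as a $\Lg$-module; in particular, weakly injectives vanish in $\ldk$. Applying $F$ to the short exact sequence $0 \r \Omega M \r M\otimes_{\L} \Lg \r M \r 0$ in $\sGl$ therefore yields an isomorphism $F(M)\cong F(\Omega M)[1]$ in $\ldk$, and iterating gives $F(\Omega^{-k}M) \cong F(M)[k]$ for every $k\in\Z$. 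Hence $X \cong F(\Omega^{-(N+1)}M)$, which establishes essential surjectivity. The subtleties to watch for will be the appeal to regularity of $\L$ (needed to kill weakly injectives in the Verdier quotient) and the bookkeeping of the cohomological shifts.
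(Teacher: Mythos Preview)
Your argument is correct and follows essentially the same route as the paper: represent $X$ by a bounded-above complex of projectives, brutally truncate so that the remaining complex has homology concentrated in a single degree, and then use that $F$ intertwines $\Omega^{-1}$ with $[1]$ to absorb the shift. The one point worth noting is that the paper truncates at the specific depth $r-n$ (where $n$ is the global dimension of $\L$) so that the resulting module $M$ is automatically a $\Lg$-\emph{lattice}; this stronger conclusion is what is actually invoked later to show $FL$ is essentially surjective, so you may want to sharpen your choice of $N$ accordingly.
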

\begin{proof}
We will show something stronger, that every object of $\ldk$ is isomorphic to $FM$ for some $\Lg$-lattice $M$.  The idea behind the proof follows \cite[Theorem 2.1]{rik}.
Let $X=X^*$ be an object of $D^b(\Lg)$. We have that, up to isomorphism in $D^b(\mod\md\Lg)$, 
\[X^*= \cdots \r P^r\r P^{r+1} \r\cdots\r P^s\r0,\]
where the $P^i$ are projective, and $H^i(X)=0$ for $i< r<0$.  The mapping cone of the natural map from $X$ to
\[\tilde{X}= \cdots P^{r-n-1}\r P^{r-n} \r 0\r \cdots\]
is a bounded complex of projectives, and hence $X\cong\tilde{X}$ in $\ldk$.  Since the cokernel $M$ of $P^{r-n-1}\r P^{r-n}$ is the $n$th kernel in a projective resolution of $\coker([P^{r-2}\r P^{r-1}])$, it follows that $M$ is a lattice.  Since $\tilde{X}$ has homology concentrated in degree $r-n$ isomorphic to $M$, it follows that $\tilde{X}[n-r]$ is isomorphic in $\ldk$ to $F(M)$.  Since $F$ is exact, we conclude that $X\cong \tilde{X} \cong F(\Omega^{r-n}(M))$.
 
\end{proof}


The following lemma can be essentially found in \cite{rik}.
\begin{lem} \label{bit}
Let $F:\cats\r\catt$ be a full, exact functor between triangulated categories such that for any non-zero $X\in \cats$, we have $FX\neq 0$.  Then $F$ is faithful.
\end{lem}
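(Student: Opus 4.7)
The plan is to reduce faithfulness to showing that $Ff=0$ forces $f=0$ for any morphism $f:X\r Y$ in $\cats$. First I would embed $f$ in a distinguished triangle $X\xrightarrow{f} Y\xrightarrow{g} Z\xrightarrow{h} X[1]$ and apply the exact functor $F$, obtaining the distinguished triangle $FX\xrightarrow{0} FY\xrightarrow{Fg} FZ\xrightarrow{Fh} FX[1]$ in $\catt$. Rotating yields
\[FY\xrightarrow{Fg} FZ\xrightarrow{Fh} FX[1]\xrightarrow{-F(f)[1]=0} FY[1],\]
a triangle whose connecting morphism vanishes; such a triangle splits, producing an isomorphism $\phi_0:FY\oplus FX[1]\xrightarrow{\sim} FZ$ whose restriction to the first summand is $Fg$. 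The restriction $\sigma:=\phi_0|_{FX[1]}$ is then a section of $Fh$.

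Next, I would invoke fullness to lift $\sigma$ to some $s:X[1]\r Z$ in $\cats$ and set $\phi:=(g,s):Y\oplus X[1]\r Z$, so that $F\phi=\phi_0$ is an isomorphism. The hypothesis that $F$ reflects the zero object promotes $F$ to a conservative functor: by exactness, $\mathrm{Cone}(F\phi)\cong F\,\mathrm{Cone}(\phi)$, and $F\phi$ iso forces $F\,\mathrm{Cone}(\phi)=0$, hence $\mathrm{Cone}(\phi)=0$ by hypothesis, hence $\phi$ is iso in $\cats$. The triangle identity $gf=0$ then reads $\phi\circ i_Y\circ f=0$, where $i_Y:Y\r Y\oplus X[1]$ is the canonical inclusion; invertibility of $\phi$ combined with the split-monomorphism character of $i_Y$ forces $f=0$.

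The delicate step is the splitting. The triangulated-category argument gives a formal direct sum decomposition purely in $\catt$, but it takes fullness to realise the resulting section $\sigma$ by an honest morphism $s$ in $\cats$, and then the reflection-of-zero hypothesis to upgrade the candidate map $\phi$ to a genuine isomorphism in $\cats$. Once this isomorphism is in hand, the conclusion $f=0$ is a purely formal manipulation inside the original triangle.
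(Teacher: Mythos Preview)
Your proof is correct and follows essentially the same approach as the paper: embed $f$ in a triangle, use that $F$ sends it to a split triangle, invoke fullness to lift a piece of the splitting, and then use reflection of zero objects on cones to upgrade to an isomorphism in $\cats$, forcing $g$ to be split mono and hence $f=0$. The only cosmetic difference is that the paper lifts a retraction $\beta:Z\to Y$ of $F\gamma$ and checks that $\beta\gamma$ becomes an isomorphism (since $F(\beta\gamma)=Id_{FY}$ has zero cone), whereas you lift a section of $Fh$ and assemble the full isomorphism $\phi=(g,s)$; both routes yield the same split-monomorphism conclusion by the same conservativity argument.
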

\begin{proof}
Let $X \stackrel{\alpha}{\r} Y$ be such that $F\alpha=0$.  We embed $\alpha$ in a triangle
\[X\stackrel{\alpha}{\r} Y \stackrel{\gamma}{\r} Z \r,\]
and obtain a triangle
\[FX\stackrel{0}{\r} FY \stackrel{F\gamma}{\r} FZ \r.\]
Since $FZ=\mbox{cone}(0)$, we have $FZ\cong FY\oplus FX[1]$, and hence a map $F\beta: FZ \r FY$ (the image under $F$ of $\beta:Z\r Y$, which exists as $F$ is full) such that $F\beta\circ F\gamma=Id_{FY}$.  Then $\beta\circ \gamma:Y\r Y$ has its mapping cone sent to $0$ by $F$, so by hypothesis $\mbox{cone}(\beta\circ\gamma)=0$. Then $\beta\circ\gamma$ is an isomorphism, so $\gamma$ is a split monomorphism, and hence $\alpha=0$.
\end{proof}

Let $L:\lslG\r \sGl$ denote the inclusion functor.  The following proposition is a generalization of \cite[Theorem 2.1]{rik}.

\begin{prop} \label{eq}
$FL:\lslG\r\ldk$ is an equivalence of triangulated categories.
\end{prop}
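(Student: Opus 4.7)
The plan is to establish essential surjectivity, fullness, and faithfulness of $FL$ in turn. Essential surjectivity is already in hand: the argument given for Lemma \ref{brap} shows that every object of $\ldk$ is isomorphic to $F(M)$ for some $\Lg$-lattice $M$, and this is exactly $FL(M)$. Since $L$ is the inclusion of a triangulated subcategory and $F$ is exact, $FL$ is exact, so by Lemma \ref{bit} faithfulness will follow as soon as we have verified both that $FL$ is full and that $FL(M)\cong 0$ forces $M\simeq 0$ in $\lslG$.

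The reflection-of-zero statement is the easy half. If $M\in\lslG$ satisfies $FL(M)\cong 0$ in $\ldk$, then, viewed as a stalk complex, $M$ is isomorphic in $D^b(\mod\md\Lg)$ to a perfect complex, and so has finite $\Lg$-projective dimension $d$. Consequently $\ol^d M$ is $\Lg$-projective, and because $\Omega$ is an auto-equivalence of $\sGl$ restricting to one of $\lslG$, this forces $M\simeq 0$ in $\lslG$.

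The main obstacle is fullness. Given lattices $M,N$ and $\phi\in\hom_{\ldk}(FM,FN)$, I would represent $\phi$ by a roof $FM\xleftarrow{s} X \xrightarrow{t} FN$ in $D^b(\mod\md\Lg)$ with $\mbox{cone}(s)\in K^b(P_{\Lg})$. By Lemma \ref{brap} we may assume, up to isomorphism, that $X=F(M')$ for some $\Lg$-lattice $M'$. Because $M$, $M'$ and $N$ are now stalk complexes in degree $0$, the maps $s:M'\r M$ and $t:M'\r N$ are ordinary module homomorphisms, and the perfectness of $\mbox{cone}(s)$ translates into $\ker s$ and $\coker s$ both having finite $\Lg$-projective dimension.

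The idea from this point is to invert $s$ inside $\lslG$ and declare $ts^{-1}$ to be the desired lift of $\phi$. Since $\ker s$ and $\coker s$ have finite projective dimension, their sufficiently high syzygies are $\Lg$-projective, and so after passing deep enough into syzygies --- where everything in sight is a lattice and Lemma \ref{oboi} identifies $\hom$'s in $\modp\md\Lg$ at successive levels --- a diagram chase forces $s$ to become an isomorphism in $\lslG$. The technical heart will be checking that the resulting lift $ts^{-1}$ is independent of the choice of roof and hence descends to a well-defined morphism in $\lslG$ mapping to $\phi$; I expect this to follow by the standard calculus-of-fractions manipulations in the Verdier quotient, once Lemma \ref{oboi} has been used to normalise the roofs into comparable form.
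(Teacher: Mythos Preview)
Your overall architecture matches the paper's exactly: essential surjectivity via Lemma \ref{brap}, reflection of zero via the infinite projective dimension of non-projective lattices, and then faithfulness via Lemma \ref{bit} once fullness is known. The paper in fact simply asserts that $FL$ is full without argument, so your attempt to supply a proof goes beyond what is written there.

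That attempt, however, contains a genuine error. You claim that if $s:M'\r M$ is a module map between lattices with perfect cone, then $\ker s$ and $\coker s$ have finite $\Lg$-projective dimension. This is false. Take $\Lambda=k$ of characteristic $2$, $G=C_2\times C_2$, $M'=M=k\oplus kG$, and $s=\left(\begin{smallmatrix} 1 & \epsilon \\ \iota & 0\end{smallmatrix}\right)$, where $\epsilon:kG\r k$ is augmentation and $\iota:k\r kG$ is inclusion of the socle. Row and column operations reduce $s$ to $\left(\begin{smallmatrix}1 & 0\\ 0 & -\iota\epsilon\end{smallmatrix}\right)$, so $\mbox{cone}(s)$ is isomorphic to the two-term complex $[kG\r kG]$ and is visibly perfect; yet $\ker s$ is the augmentation ideal $\cong\ok k$ and $\coker s\cong kG/\soc(kG)\cong\ok^{-1}k$, both of infinite $kG$-projective dimension. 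Thus your proposed mechanism for inverting $s$ in $\lslG$ (pass to high syzygies until $\ker s$ and $\coker s$ become projective) does not work, even though in this example $s$ \emph{is} a stable isomorphism. There is also a subtler gap one step earlier: Lemma \ref{brap} produces an isomorphism $X\cong F(M')$ only in $\ldk$, not in $D^b(\mod\md\Lg)$, so without further calculus-of-fractions manipulation you cannot assume the roof maps $s,t$ are honest module homomorphisms. A correct route to fullness instead exploits that lattices satisfy $\ext^{>0}_{\Lg}(M,P)=0$ for $P$ projective (via Frobenius reciprocity $\hom_{\Lg}(-,\Lg)\cong\hom_\Lambda(-,\Lambda)$ and the $\Lambda$-projectivity of $M$), which is what underlies the Buchweitz/Rickard identification of $\hom_{\ldk}(FM,FN)$ with $\shom(M,N)$.
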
  
\begin{proof}
First note that $FL$ is full, 
and since every non-projective lattice has infinite projective dimension, for no non-projective lattice $X$ is $FL(X)$ weakly injective.  By Lemma \ref{bit}, $FL$ is fully faithful.  Finally, $FL$ is essentially surjective by Lemma \ref{brap}.
\end{proof}


From this equivalence we recover a functor \[R:\sGl \stackrel{F}{\r} \ldk \simeq \lslG.\]

\begin{thm}\label{kag}
$R$ is a right adjoint to $L$.
\end{thm}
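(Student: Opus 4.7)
The plan is to verify the adjunction by directly exhibiting a natural hom-set bijection
\[\hom_{\sGl}(LM, N) \cong \hom_{\lslG}(M, RN)\]
for $M \in \lslG$ and $N \in \sGl$. Fixing a quasi-inverse $G : \ldk \to \lslG$ of the equivalence $FL$ of Proposition \ref{eq}, we take $R = GF$. Since $FL$ is fully faithful and $FLG \cong 1_{\ldk}$, we automatically get
\[\hom_{\lslG}(M, RN) \cong \hom_{\ldk}(FLM, FLRN) \cong \hom_{\ldk}(FLM, FN),\]
so it suffices to show that $F$ itself induces a natural isomorphism $F_{M,N} : \hom_{\sGl}(LM, N) \xrightarrow{\sim} \hom_{\ldk}(FLM, FN)$ whenever $M$ is a lattice.

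The key step is to apply the Heller operator $\Omega$ iteratively to push everything into $\lslG$. Since $\L$ has finite global dimension $n$, the $n$-th syzygy $\ol^n N$ of any $\Lg$-module $N$ is a lattice. Because $\Omega$ is a self-equivalence of $\sGl$ and $F$ is a triangulated functor (and therefore commutes with the shift up to natural isomorphism), there is a commutative square
\[\xymatrix{ \hom_{\sGl}(M, N) \ar[r]^-{F} \ar[d]_{\Omega^n}^{\cong} & \hom_{\ldk}(FM, FN) \ar[d]^{\cong} \\ \hom_{\sGl}(\ol^n M, \ol^n N) \ar[r]^-{F} & \hom_{\ldk}(F\ol^n M, F\ol^n N) }\]
with bijective vertical arrows. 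Now $\ol^n M$ and $\ol^n N$ are both lattices, and since $\lslG$ is a full subcategory of $\sGl$, the bottom arrow is precisely the action on morphisms of the equivalence $FL$ of Proposition \ref{eq}, hence a bijection. Therefore the top $F$ is bijective as well.

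Naturality of $F_{M,N}$ in $M$ and $N$ is automatic from functoriality, and composing with the natural isomorphisms of the first paragraph gives naturality of the full adjunction bijection.

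The main technical obstacle is pinning down the commutative square above: one must make precise the natural isomorphism $F\circ \Omega \cong [-1]\circ F$ inherent in $F$ being a triangulated functor and check that iterating and conjugating by it produces a genuinely commutative diagram of abelian groups. Once that bookkeeping is dispensed with, the remainder of the argument is a formal unwinding of the equivalence $FL$.
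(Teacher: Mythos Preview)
Your reduction to showing that $F$ induces a bijection $\hom_{\sGl}(LM, N) \to \hom_{\ldk}(FLM, FN)$ for $M$ a lattice is a clean reformulation, and the instinct to shift by $n$ so as to land among lattices is the right one. But the commutative square you write down conflates two genuinely different operations: the translation functor $\Omega$ of the triangulated category $\sGl$ (computed via the $\L$-split exact structure, i.e.\ via weakly injective covers) and the projective Heller operator $\ol$ (computed via honest projective covers). For a lattice $M$ these agree, because a projective cover $P\to M$ is automatically $\L$-split; for an arbitrary $N$ they do not. Concretely, take $\L=\O$ a discrete valuation ring and $N$ a non-projective $kG$-module, say $N=k$. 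Then $\Omega N$ in $\sG$ is the $kG$-syzygy $\ok N$, again a $kG$-module, whereas $\ol N=\oo N$ is an $\og$-lattice. These are not isomorphic in $\sG$: for any lattice $L$ one has $\hom_{\og}(k,L)=0$ and hence $\shom(k,L)=0$, while $\shom(k,\ok k)\neq 0$. More generally, the $\L$-split sequence defining $\Omega N$ shows that $\Omega N$ is isomorphic as an $\L$-module to a direct sum of copies of $N$, so $\Omega^n N$ is a lattice only when $N$ already is.

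Consequently your left vertical arrow cannot simultaneously be the autoequivalence $\Omega^n$ of $\sGl$ and land in $\hom_{\sGl}(\ol^n M,\ol^n N)$. If you take it to be $\Omega^n$, the bottom-left corner is $\hom_{\sGl}(\Omega^n M,\Omega^n N)$ with $\Omega^n N$ not a lattice, and the equivalence $FL$ does not apply to the bottom arrow. If instead you take the left arrow to be induced by the projective Heller operator $\ol^n$, you must prove that \emph{this} map is bijective---and that is exactly the content of Lemma~\ref{oboi}, which works in $\modp\md\Lg$ (modules modulo projectives, where $\ol$ is functorial) rather than in $\sGl$, and whose proof genuinely uses that $M$ is a lattice to build the inverse. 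The paper's argument is essentially your strategy made rigorous by supplying this missing bridge between $\ol$ and $\Omega$.
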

\begin{proof} 
Let $M$ be a lattice, $X$ an $\Lg$-module, and define \[\psi=\psi_{M,X}:=\hom_{\sGl}(LM,X)\r\hom_{\lslG}(M,RX)\]
by $\psi(f)=\Omega^{-n}\ol^n f$.  We first check that $\psi$ is well defined, and note that naturality follows easily.  Since $\Omega$ is a self-equivalence of $\sGl$, we need only check that $\ol^n f$ is well-defined for $f\in\hom_{\sGl}(LM,X)$.  In $D^b(\Lg)$, $f$ induces the following well-defined map
\[
\xymatrix{
\cdots\ar[r]  & P_i \ar[r]\ar[d]^{f'_i} & \cdots \ar[r] & P_0\ar[r] \ar[d]^{f'_0}  & 0 \\
\cdots\ar[r]  &  Q_{i} \ar[r] &  \cdots \ar[r] & Q_0 \ar[r]  & 0,
}
\]
where the $P_i$ form a relative projective resolution of $M$, and the $Q_i$ a projective resolution of $X$, and the $f'_i$ are the components of the chain map lifting $f$.  We have induced maps $\ol^{i}f:\ol^iM\r\ol^iX$ for each $i$.  To check that $\psi$ is well-defined, we therefore need only show that if $f$ is zero (i.e. projective), then $\ol^nf$ is projective.  However, this is implied by Lemma \ref{oboi} since $M$ is a lattice.

Now we show that $\psi_{M,X}$ is an isomorphism for all $M,X$. Note that $f$ induces the following commutative diagram
\[
\xymatrix{
0\ar[r] & \ol^n M \ar[r] \ar[d]^{\ol^n f} & P_{n-1} \ar[r] \ar[d] & \ol^{n-1} M \ar[r]\ar[d]^{\ol^{n-1}f} & 0 \\
0\ar[r] & \ol^{n}  X \ar[r]               & Q_{n-1} \ar[r]        & \ol^{n-1} X \ar[r]                   & 0,
}
\]
and so Lemma \ref{oboi} shows that $\ol^{n-1}f$ is projective if (and only if) $\ol^n f$ is.  By the same argument, we find that all the $\ol^{i}f$ are not projective, and hence $f$ is not projective.  and $\psi$ is injective.


Let $\alpha\in\hom_{\lslG}(M,RX)=\hom_{\lslG}(RM,RX)$.  Then we have a commutative diagram  
\[
\xymatrix{
0\ar[r] & \ol^n M \ar[r] \ar[d]^{\Omega^n\alpha} & \ol^n M\u \ar[r] \ar[d]&\cdots\ar[r] & \Omega^{-n}\ol^n M\u \ar[r]\ar[d] & \Omega^{-n}\ol^n M \ar[r]\ar[d]^\alpha & 0 \\
0\ar[r] & \ol^n X \ar[r] & \ol^n X\u \ar[r] &\cdots \ar[r] & \Omega^{-n}\ol^n X \u \ar[r] & \Omega^{-n}\ol^n X \ar[r] & 0
}
\]
yielding a diagram
\[
\xymatrix{
0\ar[r] & \ol^n M \ar[r] \ar[d]^{\Omega^n\alpha} & P_{n-1} \ar[r] \ar[d]&\cdots\ar[r] & P_0 \ar[r]\ar[d] &  M \ar[r]\ar[d]^\beta & 0 \\
0\ar[r] & \ol^n X \ar[r] & Q_{n-1} \ar[r] &\cdots \ar[r] & Q_0  \ar[r] & X \ar[r] & 0.
}
\]
By repeated applications of Lemma \ref{oboi}, $\beta$ is non-projective and well-defined by $\ol^n f$.  It follows that $\alpha=\psi(\beta)$, and $\psi$ is surjective.
\end{proof}

\begin{rmk}
We will habitually identify $M$ with $LM$ in $\sGl$ when $M$ is a lattice, and we will write $\ep_M:RM\r M$ for the counit of the adjoint pair $(L,R)$.
\end{rmk}

Let $\ker R$ denote the kernel of $R$, the full subcategory of $\sGl$ consisting of objects $X$ with $RX\simeq 0$.  This is a thick subcategory of $\sG$ and its objects are precisely the modules of finite projective dimension.  It is not hard to construct $\Lg$-modules of finite projective dimension that are not weakly injective.  An obvious source are the cones of the counit maps, but it is even possible to find torsion modules in the kernel.

\begin{eg} \label{gtytg}
Let $G=C_2=\langle g \mid g^2=1 \rangle$, and let $\L=\Z_2$, the $2$-adic integers.  Let $\alpha:\Lg\r\Lg$ be given by the matrix $\begin{pmatrix}1 & 3 \\ 3 & 1\end{pmatrix}$ (with respect to the basis $(1_G,g)$).  Note that $\det(\alpha)=-8$, so $\alpha$ is injective.  The Smith normal form of $\alpha$ is $\begin{pmatrix} 1 & 0 \\ 0 & 8 \end{pmatrix}$, and so $\coker (\alpha)$ is $\L/8\L$ as an $\L$-module (the action of $g$ can be checked to be multiplication by $5$).  Since $\coker (\alpha)$ is cyclic, it cannot be weakly injective, yet by construction has projective dimension 1.
\end{eg}

It follows from this example that the kernel of $R$ is not generally trivial.  We can characterize the kernel in terms of the counit of the adjunction.  Recall that if $C$ is a subcategory of a triangulated category $\mathcal{T}$, then $C^{\perp}=\{T\in \mathcal{T}\mid \hom_{\mathcal{T}}(M,T)=0 \mbox{ \  for all  \ } M\in C\}$.
\begin{lem} \label{kerlem}
An $\Lg$-module $X$ is in the kernel of $R$ if and only if $\ep_X\simeq 0$.  In particular, $\ker R= \slG^\perp$.
\end{lem}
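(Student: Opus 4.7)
The plan is to prove both statements as formal consequences of the adjunction $(L,R)$ established in Theorem \ref{kag}, using the universal property of the counit. I would not need any further information about how $R$ is constructed.

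First I would prove the equivalence $X\in\ker R \iff \ep_X\simeq 0$. The easy direction: if $RX\simeq 0$, then $LRX\simeq 0$ in $\sGl$, so every map out of $LRX$ is zero, and in particular $\ep_X\simeq 0$. For the converse, I would apply the adjunction isomorphism
\[\psi_{RX,X}:\hom_{\sGl}(LRX,X)\stackrel{\sim}{\r}\hom_{\lslG}(RX,RX).\]
By the universal property of the counit, $\psi_{RX,X}(\ep_X)=\mbox{Id}_{RX}$. Hence $\ep_X\simeq 0$ forces $\mbox{Id}_{RX}=0$ in $\lslG$, which is equivalent to $RX\simeq 0$. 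This is the main content of the lemma; the only real point is invoking the universal characterisation of $\ep_X$ as the image of the identity under $\psi$.

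For the identification $\ker R=\lslG^{\perp}$ I would argue directly from the adjunction. If $X\in\ker R$, then for every lattice $M$ we have
\[\hom_{\sGl}(LM,X)\cong \hom_{\lslG}(M,RX)=\hom_{\lslG}(M,0)=0,\]
so $X\in \lslG^{\perp}$. Conversely, if $X\in\lslG^{\perp}$, then taking $M=RX$ gives $\hom_{\sGl}(LRX,X)=0$, so in particular $\ep_X\simeq 0$, and the first part of the lemma gives $X\in\ker R$.

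There is no real obstacle: once one has the adjunction in hand, both statements are formal. The only place one must be careful is in tracking the identification $LRX=RX$ (noted in the preceding remark), so that the counit $\ep_X$ is genuinely a morphism in $\sGl$ to which the words "$\simeq 0$" apply unambiguously.
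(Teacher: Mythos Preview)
Your argument is correct and takes a genuinely different route from the paper's. The paper works concretely from the construction of $R$: having already observed that $\ker R$ consists of the modules of finite projective dimension, it uses the explicit description $\ep_X\simeq\Omega^{-n}(1_{\ol^n X})$ coming from the proof of Theorem~\ref{kag}, so that $\ep_X\simeq 0$ forces $1_{\ol^n X}$ to be weakly injective and hence $X$ to have finite projective dimension. You instead extract the result as a formal consequence of the adjunction alone, via the standard fact that the counit corresponds to the identity under the adjunction bijection. Your approach is cleaner and applies verbatim to any adjunction whatsoever; the paper's approach has the small benefit of tying the lemma back to the concrete description of $\ker R$ in terms of projective dimension, which is invoked elsewhere. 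For the final statement the paper says only that it is clear; your explicit use of $\shom(LM,X)\cong\shom(M,RX)$ to identify $\ker R$ with $\lslG^{\perp}$ spells this out nicely.
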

\begin{proof} 
It is clear that the kernel of $R$ consists of those modules $X$ of finite projective dimension, in which case the counit is clearly a weakly injective map.  Conversely, the construction of the counit $\ep_X:RX \r X$ shows that we have $\ep_X\simeq\ol^{-n}(1_{\ol^{n}X})$.  Hence if $\ep_X$ is weakly injective, the identity of $\ol^nX$ is weakly injective, and so $X$ has finite projective dimension. 
The final statement is clear.
\end{proof}

In \cite{BIK}, the authors note that, if $k$ is a quotient ring of $\L$ such that $k$ is finitely presented as an $\L$-module, then the full subcategory $D=\{Y\in \sGl \mid Y\cong Y\otimes k\}$ admits $-\otimes k$ as a left adjoint to the inclusion $D\r \sGl$, i.e. if $M\in D$ and $X$ is an arbitrary object in $\sGl$, then $\shom(X,M)\cong \shom(X\otimes k,M)$.  This holds in particular if $\L$ is noetherian and $k$ is a quotient field of $\L$.  

We have seen that the kernel of $R$ is generally non-trivial.  However, the following shows that when $\L$ is noetherian, every non-projective $kG$-module has infinite projective dimension as an $\Lg$-module when $k$ is a quotient field of $\L$ with characteristic dividing $|G|$ (note that if the characteristic of $k$ does not divide $|G|$, then every $kG$-module is weakly injective, and hence has finite projective dimension as an $\Lg$-module).  
\begin{prop} \label{kelm}
Let $\L$ be a regular ring of global dimension $n$, and suppose $k$ is a quotient field of $\L$ whose characteristic divides $|G|$.  Let $M$ be a non-projective $kG$-module.  Then $RM\not\simeq 0$.
\end{prop}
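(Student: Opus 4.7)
The plan is to prove the contrapositive: if the $kG$-module $M$ has finite $\Lg$-projective dimension then $M$ must be $kG$-projective, contradicting the hypothesis; the result then follows from Lemma \ref{kerlem}, which identifies the kernel of $R$ with the modules of finite $\Lg$-projective dimension.

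Let $\mathfrak{m}=\ker(\L\r k)$.  Since $M$ is annihilated by $\mathfrak{m}$, any finite $\Lg$-projective resolution of $M$ localises at $\mathfrak{m}$ to a finite $\L_\mathfrak{m}G$-projective resolution of the same length, so I may replace $\L$ by $\L_\mathfrak{m}$ and assume $\L$ is regular local with maximal ideal $\mathfrak{m}=(\pi_1,\ldots,\pi_n)$ generated by a regular sequence.  The Koszul complex $K_\bullet$ on this sequence is a free $\L$-resolution of $k$ of length $n$, and tensoring with $\Lg$ over $\L$ gives a free $\Lg$-resolution of $kG$ of length $n$.

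Using this resolution and associativity, $M\otimes^L_\Lg kG$ is represented by $M\otimes_\L K_\bullet$, whose $q$-th term is $M^{\binom{n}{q}}$ (as a $kG$-module) and whose differentials are multiplications by the $\pi_i$.  Since $\mathfrak{m}$ annihilates $M$, these differentials vanish and
\[M\otimes^L_\Lg kG\;\simeq\;\bigoplus_{q=0}^n M^{\binom{n}{q}}[q] \quad\text{in } D^b(kG).\]
The derived tensor-hom adjunction $\mbox{RHom}_\Lg(M,N)\simeq\mbox{RHom}_{kG}(M\otimes^L_\Lg kG,N)$ then yields
\[\ext^s_\Lg(M,N)\;\cong\;\bigoplus_{q=0}^n \binom{n}{q}\ext^{s-q}_{kG}(M,N)\]
for any $kG$-module $N$.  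If $\mbox{pd}_\Lg M=m<\infty$, the left-hand side vanishes for $s>m$; the $q=0$ summand then forces $\ext^s_{kG}(M,N)=0$ for all $s>m$ and every $N$, so $M$ has finite $kG$-projective dimension.  Since $kG$ is Frobenius, $M$ must in fact be $kG$-projective, contradicting the hypothesis.

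The main obstacle is the formality of $M\otimes^L_\Lg kG$ in $D^b(kG)$: once one reduces to the regular local case, the Koszul differentials lie in $\mathfrak{m}$ and so vanish on $M$, and everything else is a routine manipulation of derived adjunctions together with the Frobenius property of $kG$.
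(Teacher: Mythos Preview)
Your argument is correct.  The one cosmetic point is that after localising at $\mathfrak{m}$ the Krull dimension of $\L_\mathfrak{m}$ may be strictly less than the global dimension $n$ of $\L$; the regular sequence generating the maximal ideal then has that possibly smaller length, but nothing in your computation depends on the length being exactly $n$, so this is harmless.

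Your route is genuinely different from the paper's.  The paper never computes a derived tensor product or an $\ext$ group: instead it restricts to a Sylow $p$-subgroup (using that weak injectivity is detected on Sylows), and then for a $p$-group invokes the BIK adjunction $\shom(\L,M)\cong\shom(k,M)$ together with the fact that $k$ generates the stable category of a $p$-group, so $\shom(k,M)\neq0$ for every non-projective $M$; this exhibits a lattice (namely the trivial lattice $\L$) mapping non-trivially to $M$, whence $M\notin\lslG^{\perp}=\ker R$ by Lemma~\ref{kerlem}.  Your approach trades these stable-categorical reductions for a direct homological computation: the Koszul formality of $M\otimes^L_{\Lg}kG$ gives an explicit decomposition of $\ext^*_{\Lg}(M,-)$ in terms of $\ext^*_{kG}(M,-)$, from which finite projective dimension over $\Lg$ forces finite projective dimension over $kG$.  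The paper's argument is shorter and stays inside the stable framework it has set up, but relies on the Sylow reduction and on knowing that $k$ detects projectivity for $p$-groups; your argument is more self-contained, avoids the passage to Sylow subgroups entirely, and in fact yields the sharper quantitative statement $\ext^s_{\Lg}(M,N)\cong\bigoplus_q\binom{d}{q}\ext^{s-q}_{kG}(M,N)$ (with $d=\dim\L_\mathfrak{m}$) for any $kG$-modules $M,N$.
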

\begin{proof}
By Lemma \ref{kerlem}, $RM\simeq 0$ is equivalent to $\ol^n(M)$ being weakly injective, which in turn is equivalent to $\ol^n(M)\d^G_S$ being weakly injective for $S$ a Sylow $p$-subgroup of $G$ (where $p=\mbox{Char}(k)$).  Since restriction is an exact functor, we may assume $G$ is a $p$-group.  Then, for every non-projective $kG$-module $M$ we have
\[\shom(\L,M)\cong \shom(k,M)\neq 0.\]
Thus $RM\not\simeq 0$ by Lemma \ref{kerlem}. 
\end{proof}

\section{Almost split sequences for Kn\"{o}rr lattices}
From now on, let $\O$ denote a complete discrete valuation ring with maximal ideal $J(\O)=\mathfrak{m}$ principly generated by some $\pi\in \O$.  To avoid trivialities, we assume $|G|$ is not a unit in $\O$.  This means that the residue field $k=\O/\mathfrak{m}$ has characteristic dividing the order of $|G|$, and hence $(p)=(\pi)^e$ for some $e\geq1$, where $p$ is the characteristic of $k$.  It is well-known that the Heller operators $\oo$ and $\oo^{-1}$ preserve indecomposability of $\og$-lattices, and that any surjective map from a non-projective indecomposable $\og$-lattice to an indecomposable $(\O/\pi^b\O)G$-lattice (for all $b\in \N)$ is non-zero in $\sG$.  Moreover, Thompson \cite{thomp} has shown that an $\og$ lattice $M$ has a projective summand $P$ if and only if $P\otimes k$ is a summand of $M\otimes k$.   We will use these facts without comment.  Further, we will frequently write $\overline{M}$ in place of $M\otimes k$.   

Since $\O$ is hereditary, we have that $RM\simeq \oo^{-1}(\oo(M))$ in $\slG$ (and $\sG$).  As $\oo^{-1}(P)\cong 0$ when $P$ is projective, we will take $RM$ to be projective-free by convention, when viewed as an $\og$-lattice.

Note that if $M$ is an $\og$-lattice (or $kG$-module), then $\oo M$ (or $\ok M$) is isomorphic to $\Omega M$ in $\sG$, and we will implicitly identify them.  In particular $\Omega M$ and $\Omega^{-1} M$, when considered in the module category, will be assumed to be projective-free.  Finally, given an $\og$-module $M$, we denote the $\og$-projective cover of $M$ by $P(M)$.

Let us recall the construction of almost split sequences for $\og$-lattices.  If $N$ is an indecomposable $\og$-lattice with stable $\og$-endomorphism ring $E_N$, then $E_N$ has a simple socle (as an ($E_N\md E_N)$-bimodule).  A generator of this socle is called an \emph{almost projective morphism}. It is shown in \cite[Theorem 34.11]{tev} that there is an almost split sequence
\[0\r \oo N \r Y_N \stackrel{\psi_N}{\r} N \r 0\]
in the category of $\og$-lattices where, $\psi_N:Y_N\r N$ is a pullback of an almost projective morphism along a projective cover $P_N\r N$.

We note the following immediate consequence:
\begin{prop} \label{arl}
Let $M$ be an indecomposable lattice with stable endomorphism ring $k$. Then every non-projective map to $M$ is a split surjection, and every non-projective map from $M$ is a split monomorphism.
\end{prop}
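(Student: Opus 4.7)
The plan is to exploit the classical construction of almost split sequences for lattices recalled just before the proposition: the hypothesis $\send(M)=k$ collapses this construction almost to triviality, since the almost split sequence terminating in $M$ will turn out to have projective middle term. Both conclusions then follow at once from the defining right/left almost split properties.

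Concretely, since $\send(M)=k$ is one-dimensional, its socle as a bimodule over itself is the whole of $\send(M)$, so I may take the almost projective endomorphism generating that socle to be $1_M$. Pulling $1_M$ back along a projective cover $P(M)\r M$ returns $P(M)\r M$ itself, so the almost split sequence terminating in $M$ has the form
\[0\r \oo M \r P(M) \stackrel{\psi_M}{\r} M \r 0.\]
Given a non-projective map $f:X\r M$, if $f$ were not a split surjection then the right almost split property would force $f$ to factor through $\psi_M$, and hence through the projective module $P(M)$; but then $f$ would be zero in $\sG$, contradicting the hypothesis. So $f$ is a split surjection, as required.

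For the second statement I would apply the same observation to $\oo^{-1}M$. Since $\oo^{-1}$ is a self-equivalence of $\sG$ we still have $\send(\oo^{-1}M)\cong k$, so the almost split sequence ending at $\oo^{-1}M$ --- which is simultaneously the almost split sequence starting at $M$ --- reads
\[0\r M \r P(\oo^{-1}M) \r \oo^{-1}M \r 0.\]
Any non-split monomorphism $M\r X$ then factors through $M\r P(\oo^{-1}M)$ by the left almost split property, hence through a projective, and so is zero in $\sG$.

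There is no substantive obstacle here: the only point needing a word of justification is that $1_M$ may be taken as the almost projective morphism when $\send(M)=k$, which is immediate since any nonzero element of a one-dimensional algebra generates its socle. The proposition is thus essentially an immediate corollary of the general construction, together with the self-equivalence property of $\oo^{-1}$ used to transport the argument to the dual side.
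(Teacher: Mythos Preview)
Your proof is correct and follows essentially the same line as the paper's: both observe that $\send(M)=k$ forces $1_M$ to be almost projective, so that the right almost split map into $M$ is a projective cover and the left almost split map out of $M$ is a (relative) injective hull; the conclusions then follow immediately from the defining properties. The only cosmetic difference is that the paper states directly that $M\r I(M)$ is left almost split, whereas you obtain the same map as $M\r P(\oo^{-1}M)$ by transporting the argument through the self-equivalence $\oo^{-1}$; these are the same sequence. One small wording slip: in your last paragraph you write ``non-split monomorphism'' where you mean ``map that is not a split monomorphism'', but the intent is clear.
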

\begin{proof}
If $\send(M)=k$, then $Id_M$ is an almost projective morphism.  Therefore any projective cover $P(M)\r M$ is right almost split and any injective hull $M\r I(M)$ is left almost split.
\end{proof}

\subsection{Lattices of minimal exponent}
Recall that Carlson and Jones \cite{C-J} define the \emph{exponent} $\exp(B)$ of a torsion $\O$-module $B$ to be the least power of $\pi$ annihilating $B$.  Further, the exponent of an $\og$-module $M$ is the exponent of the $\O$-module $\send(M)$.  We write $\pi^a< \pi^b$ if $a<b$. It is a well-known property of $\og$ lattices that $\exp(\shom(M,N))\leq \exp(L) \ \ \mbox{and  } \exp(M)$ (\cite[Lemma 2.1]{C-J}). For instance, Heller lattices of non-projective $kG$-modules all have exponent $\pi$, and $\exp\shom(M,N)=\pi$ if either $M$ or $N$ is the Heller lattice of some non-projective $kG$-module.  This is a consequence of the adjunction $R$.  
Kawata \cite{kaw1} has previously noted that the indecomposable lattices of exponent $\pi$ are precisely the Heller lattices of indecomposable non-projective $kG$-modules when $\O$ is a ramified extension of ramification degree at least 3 of some complete discrete valuation ring $\O'$.  We will prove a generalisation of this result independent of ramification in $\O$, and from this and Theorem \ref{uphere}, Kawata's characterisation of lattices of exponent $\pi$ will hold when the ramification degree is two.

\begin{lem} \label{breakdownsucka}
Let $M,N$ be $\og$-lattices, and suppose $\exp(M)\geq \exp(N)=\pi^a$.  Let $b\geq a$ and write $M_b$ for $M\otimes\O/\pi^b\O$, $\O_bG$ for $(\O/\pi^b\O) G$ and so on. Then the map $-\otimes \O/\pi^b\O:\shom(M,N)\r\shom(M_b,N_b)$ induced by reduction mod $\pi^b$ is injective.
\end{lem}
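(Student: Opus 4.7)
The plan is to work directly with the trace-map description of $\phom$ recalled in the definition of the stable category, which reduces the question to linear algebra over $\O$.  Take $f\in\hom_{\og}(M,N)$ whose reduction $\bar f$ lies in $\phom_{\O_bG}(M_b,N_b)$; the goal is to prove $f\in\phom_{\og}(M,N)$.

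First I would use the identification $\phom_{\O_bG}(M_b,N_b)=\hom_{\O_b}(M_b,N_b)_1^G$ to write $\bar f=\mbox{Tr}_G(\bar g)$ for some $\O_b$-linear $\bar g:M_b\r N_b$.  Because $M$ and $N$ are finitely generated $\O$-free modules, the reduction map $\hom_\O(M,N)\rep\hom_{\O_b}(M_b,N_b)$ is surjective, so we may lift $\bar g$ to some $g\in\hom_\O(M,N)$.  Then $\mbox{Tr}_G(g)\in\phom_{\og}(M,N)$ by construction, and by naturality of reduction $f-\mbox{Tr}_G(g)$ reduces to zero modulo $\pi^b$ in $\hom_\O(M,N)$.

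Next I would check that $\hom_{\og}(M,N)=\hom_\O(M,N)^G$ is pure in $\hom_\O(M,N)$: if $h\in\hom_\O(M,N)$ satisfies $\pi^b h\in\hom_{\og}(M,N)$, then for each $m\in M$ and $s\in G$ we have $\pi^b\bigl(h(ms)-h(m)s\bigr)=0$, and $N$ being $\O$-torsion-free forces $h$ itself to be $G$-equivariant.  This lets us write $f-\mbox{Tr}_G(g)=\pi^b h$ for some $h\in\hom_{\og}(M,N)$.

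Finally I would invoke the exponent hypothesis.  By definition of $\exp(N)=\pi^a$ with $b\geq a$, the map $\pi^a Id_N$ factors through a weakly injective $\og$-module, so $\pi^b h=\pi^{b-a}(\pi^a Id_N)\circ h$ also does, placing $\pi^b h$ in $\phom_{\og}(M,N)$.  Combining yields $f=\mbox{Tr}_G(g)+\pi^b h\in\phom_{\og}(M,N)$, as required.  The only genuinely delicate step is the purity observation; the hypothesis $\exp(M)\geq\exp(N)$ enters only to ensure that $\pi^a=\min(\exp(M),\exp(N))$ is the correct quantity to annihilate the $\pi^b$-multiple.
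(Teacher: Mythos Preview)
Your proof is correct and follows essentially the same line as the paper's.  The paper lifts a factorisation of $\bar f$ through a projective $\O_bG$-module $P$ to a factorisation through a projective $\og$-module $Q$ with $Q_b\cong P$, whereas you lift a trace-map preimage $\bar g$ to an $\O$-linear $g$; since $\phom$ is simultaneously the ideal of maps factoring through weakly injectives and the image of $\mbox{Tr}_G$, the two liftings are interchangeable and the remainder of the argument (writing the discrepancy as $\pi^b$ times an $\og$-map and killing it with the exponent hypothesis) is identical.  Your explicit verification that $\hom_{\og}(M,N)$ is pure in $\hom_\O(M,N)$ makes transparent a point the paper leaves implicit when it writes $f+\pi^b g=\alpha'\beta'$ with $g$ already assumed to lie in $\hom_{\og}(M,N)$.
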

\begin{proof}
We first remark that our assumption implies $\pi^b\cdot\shom(M,N)=0$.  For $f\in \shom(M,N)$, suppose $f\otimes \O/\pi^b\O\in \shom(M_b,N_b)$ is zero.  Then $f\otimes \O/\pi^b\O=\alpha\cdot \beta$, for some $\beta\in\hom_{\O_bG}(M_b,P)$ and $\alpha\in\hom_{\O_bG}(P,N_b)$, where $P$ is a projective $\O_bG$ module.  Then $\alpha$ and $\beta$ lift (non-uniquely) to $\alpha'\in\hom_{\og}(M,Q)$ and $\beta'\in\hom_{\og}(Q,N)$ respectively, where $Q_b=P$.  It follows we may write $f+\pi^b g=\alpha' \cdot \beta'$ for some $g\in \hom_{\og}(M,N)$.  But as we've seen, $\pi^b g$ is  projective, and hence so was $f$.
\end{proof}

The following lemma is a special case of \cite[Proposition 4.5]{kaw2}, but we rephrase the proof in terms of the functor $R$ for convenience.

\begin{lem} \label{exp2}
Let $M$ be an indecomposable $\og$-module that is not a summand of $RN$ for any $kG$-module $N$.  Then the almost split sequence starting with $M$ is split modulo $\pi$.
\end{lem}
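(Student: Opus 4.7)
The almost split sequence starting with $M$ is $0\r M\stackrel{\iota}{\r} Y\r \oo^{-1}M\r 0$, an $\O$-split sequence of lattices; reducing mod $\pi$ gives an exact sequence $0\r \overline{M}\stackrel{\bar\iota}{\r} \overline{Y}\r \overline{\oo^{-1}M}\r 0$ of $kG$-modules, and the plan is to construct a $kG$-retraction of $\bar\iota$.  Write $\nu:M\rr \overline{M}$ for the natural projection; by Theorem~\ref{kag}, $\nu$ corresponds (in $\sG$) to a morphism $\tilde\nu:M\rr R\overline{M}$ in $\slG$ with $\ep_{\overline{M}}\circ\tilde\nu\simeq\nu$.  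The key observation is that $\tilde\nu$, regarded as an honest module map, is not a split monomorphism in $\latt\md\og$: if it were, then since $M$ is indecomposable and $R\overline{M}$ is taken projective-free, $M$ would be a direct summand of the lattice $R\overline{M}$, contradicting the hypothesis applied with $N=\overline{M}$.

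Since $R\overline{M}$ is a lattice, the almost split property of the sequence starting at $M$ now produces a morphism $\tilde\xi:Y\rr R\overline{M}$ with $\tilde\xi\iota=\tilde\nu$ in $\latt\md\og$.  Setting $\xi=\ep_{\overline{M}}\circ\tilde\xi:Y\rr \overline{M}$, one has $\xi\iota\simeq\nu$ in $\sG$, so $\xi\iota-\nu$ factors through some weakly injective $\og$-module, which without loss of generality may be taken of the induced form $W=V\otimes_{\O}\og$.  Reducing mod $\pi$, the difference $\bar\xi\bar\iota-1_{\overline{M}}$ factors through $\overline{W}\cong(V/\pi V)\otimes_{k}kG$, a projective (hence injective) $kG$-module.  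Using the injectivity of $\overline{W}$ to extend the first factor $\overline{M}\rr\overline{W}$ to a map $\overline{Y}\rr\overline{W}$, and subtracting the resulting composite from $\bar\xi$, one obtains a genuine $kG$-retraction of $\bar\iota$, whence the reduced sequence splits.

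The principal obstacle is the interplay between $\sG$, $\slG$, and the ambient module category: one has to transfer the non-split-mono property of $\tilde\nu$ from $\slG$ down to $\latt\md\og$ before invoking the almost split property, and subsequently translate the stable-category equality $\xi\iota\simeq\nu$ into an honest module-theoretic factorisation through a weakly injective of the correct induced form in order that its reduction modulo $\pi$ lands in a projective $kG$-module.  The remainder of the argument is then a routine injectivity manoeuvre in $\mod\md kG$.
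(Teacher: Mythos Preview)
Your proof is correct and follows essentially the same strategy as the paper: factor the reduction map $M\to\overline{M}$ through $R\overline{M}$ via the adjunction, use the hypothesis to see that the resulting map $M\to R\overline{M}$ is not a split monomorphism, and then invoke the left almost split property.  The paper orders things slightly differently---it first shows directly from a commutative diagram that $\varphi\otimes k$ (your $\tilde\nu\otimes k$) is a split monomorphism, and only then factors $\varphi$ through the left almost split map $h$, whence $h\otimes k$ is automatically a split monomorphism---whereas you factor first and then carry out the correction argument on $\xi\iota$; but the content is the same, and your more explicit handling of the passage between $\sG$ and the module category (your steps involving $W$ and the injectivity manoeuvre) makes precise what the paper's diagram encodes more tersely.
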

\begin{proof}
For an $\og$-lattice $X$, let $\alpha_X:X\r\overline{X}$ denote the canonical surjection.  Then $\alpha_M$ factors through $R\overline{M}$, say via $\varphi:M\r R\overline{M}$.  We are left with the following picture:
\[
\xymatrix{
M\ar@{=}[d] \ar[r]^\varphi & R\overline{M} \ar[d]^{\alpha_{RM}} \\
M \ar[r]^(0.4){\alpha_{RM}\varphi} \ar[d]^{\alpha_M} & R\overline{M}\otimes k \ar[d]^{\ep_M\otimes k} \\
\overline{M} \ar@{=}[r] \ar[ur]^\beta & \overline{M},
}
\]where both squares commute and $\alpha_{RM}\varphi=\beta\alpha_M$. It follows that $\beta=\varphi\otimes k$ is a split monomorphism.  Now, if $h:M\r C$ is a left almost split morphism, $h$ factors through $\varphi$ since $M$ is not a summand of $R\overline{M}$.  Since $\varphi$ split mod $(\pi)$, so too is $h$. 
\end{proof}
\begin{rmk}
Implicit in this result is that if $M$ is an indecomposable non-projective $\og$-lattice, then $\overline{M}$ is a summand of $R\overline{M}\otimes k$, and that this is independent of the ramification of $p$ in $\O$.  This is also proven more directly in \cite[Theorem 2.1 (b)]{jones} (see also Lemma \ref{j-lad}).  We will see in Proposition \ref{rk} that more generally, the projective-free $kG$-module $N$ is always a summand of $\overline{RN}$, but this requires ramification of $p$ in $\O$.
\end{rmk}

We can now prove
\begin{prop} \label{kablah}
An indecomposable $\og$-lattice $X$ has exponent $\pi$ if and only if $X$ is a summand of $RN$ for some $kG$-module $N$.
\end{prop}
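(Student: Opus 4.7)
The proof has two directions. For the ``if'' implication I would argue by functoriality: for any $kG$-module $N$ we have $\pi\cdot Id_N=0$ in $\sG$, so additivity of $R$ gives $\pi\cdot Id_{RN}=R(\pi\cdot Id_N)=0$ in $\slG$. Hence $\exp(RN)$ divides $\pi$, and since $RN$ is taken projective-free by convention, any indecomposable summand $X$ is non-projective with $\exp(X)=\pi$.

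The forward direction is the substance of the statement. Given $X$ indecomposable of exponent $\pi$, I claim $X$ is a summand of $R\overline{X}$, where $\overline{X}=X\otimes k$. Since $\O$ has global dimension one, $R\overline{X}\simeq\Omega^{-1}\Omega\overline{X}$ in $\slG$. Writing $\pi_X:P(X)\r X$ for the $\og$-projective cover (which also serves as the projective cover of the $kG$-module $\overline{X}$, as $X$ and $\overline{X}$ have the same top), a direct check identifies $\Omega\overline{X}$ with the preimage $\pi_X^{-1}(\pi X)\subseteq P(X)$, and the inclusion $\Omega X\subseteq\Omega\overline{X}$ yields a short exact sequence of lattices
\[0\r\Omega X\r\Omega\overline{X}\r X\r 0,\]
whose quotient map sends $w$ to $\pi_X(w)/\pi\in X$ (well-defined since $\pi_X(w)\in\pi X$ and $X$ is torsion-free). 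If this sequence splits then $\Omega\overline{X}\simeq\Omega X\oplus X$ as $\og$-modules, and applying the self-equivalence $\Omega^{-1}$ of $\slG$ yields $R\overline{X}\simeq X\oplus\Omega^{-1}X$ in $\slG$, whence $X$ is a summand of $R\overline{X}$ as desired.

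The heart of the proof is thus constructing the splitting, and this is where $\exp(X)=\pi$ is used. The hypothesis says $\pi\cdot Id_X$ factors through a weakly injective module $W$; however the $\O$-torsion part of any such $W$ is automatically killed by any map into the torsion-free $X$, so after shrinking we may assume $W$ is genuinely $\og$-projective, with a factorisation $\pi\cdot Id_X=vu$ for $u:X\r W$ and $v:W\r X$. Projectivity of $W$ combined with surjectivity of $\pi_X$ produces a lift $\tilde v:W\r P(X)$ of $v$, and the composite $s:=\tilde v u:X\r P(X)$ satisfies $\pi_X s=\pi\cdot Id_X$. Consequently $s(X)\subseteq\pi_X^{-1}(\pi X)=\Omega\overline{X}$, and composing $s$ with the quotient $\Omega\overline{X}\r X$ returns $y\mapsto\pi_X(s(y))/\pi=y$, so $s$ is the required section. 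The only genuine subtlety is the discrepancy between ``weakly injective'' and ``$\og$-projective'', which is dealt with by the torsion reduction above; once the splitting is in hand, the remaining identifications are formal manipulations with the Heller operator and the adjunction of Section \ref{adj}.
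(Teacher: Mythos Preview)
Your argument is correct and takes a genuinely different route from the paper. The paper proves the forward direction by contrapositive via the almost split sequence machinery: if $X$ were not a summand of any $RN$, Lemma~\ref{exp2} forces the almost split sequence starting at $X$ to split modulo $\pi$, so an almost projective endomorphism of $X$ becomes projective modulo $\pi$; but when $\exp(X)=\pi$, Lemma~\ref{breakdownsucka} says reduction modulo $\pi$ is injective on stable endomorphisms, a contradiction. Your proof is instead direct and constructive: you exhibit an explicit section of $0\to\oo X\to\oo\overline X\to X\to 0$ by lifting the factorisation of $\pi\cdot Id_X$, which is exactly the observation that the top row of the pullback diagram~\eqref{bjdiag} already splits when $j=a$ (Lemma~\ref{j-lad} states this only for $j>a$, but the same proof applies for $j\geq a$). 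Your approach has the advantage of naming the module $N=\overline X$ explicitly and of being entirely elementary; the paper's approach, by contrast, reuses Lemmas~\ref{exp2} and~\ref{breakdownsucka}, which are needed elsewhere in the section, and so is economical in context.

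Two minor remarks. First, your detour through weakly injectives is unnecessary: the Carlson--Jones definition of exponent is phrased directly in terms of factoring through a \emph{projective} module, so you may take $W$ projective from the outset. Second, the last step passes from $R\overline X\simeq X\oplus\Omega^{-1}X$ in $\slG$ to $X$ being an honest module-theoretic summand of the projective-free lattice $R\overline X$; this is immediate from Krull--Schmidt over the complete local ring $\O$ together with the fact that $X$ and $\oo^{-1}X$ are both indecomposable non-projective, but it is worth saying so explicitly.
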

\begin{proof}
If $M$ is not a summand of some $RN$ with $N$ a $kG$-module, then by Lemma \ref{exp2}, any almost projective map in $\underline{\End}(M)$ is projective modulo $(\pi)$.  By Lemma \ref{breakdownsucka}, if $X$ has exponent $\pi$ it follows that no stable endomorphism of $X$ is projective modulo $(\pi)$.  Hence $X$ is a summand of some Heller lattice.  The converse is clear.
\end{proof}

\subsection{Heller lattices of simple modules}

We study here the the lattices $\oo S=\rad P(S)$, where $S$ is a simple $kG$-module.  We give criteria for $\oo S$ to be decomposable, and relate this to question of decomposability of middle terms of almost split sequences for Kn\"{o}rr lattices of exponent $\pi$.  We start with some easy lemmas. 

\begin{lem} \label{multisoc}
Let $S$ be a non-projective simple $kG$-module.  Then $P(RS)\cong P(S)\oplus P(S)$.  
\end{lem}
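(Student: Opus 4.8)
The plan is to compute the projective cover of $RS$ by unwinding the definition $RS \simeq \oo^{-1}\oo S$ and tracking ranks through the construction. Recall that $\oo S = \rad P(S)$ sits in the $\O$-split exact sequence $0 \to \rad P(S) \to P(S) \to S \to 0$, so $\oo S$ is an $\og$-lattice with $\rk_\O(\oo S) = \rk_\O P(S) - \dim_k S$. Since $S$ is a $kG$-module, reduction mod $\pi$ of this sequence shows $\overline{\oo S}$ sits in $0 \to \overline{\rad P(S)} \to \overline{P(S)} \to S \to 0$; but $\overline{P(S)}$ is a projective $kG$-module with $\overline{P(S)}/\rad \cong S$, and the point is that $\overline{\oo S} \cong \ok S \oplus (\text{projective})$ where the projective part comes from the fact that the exact sequence $0 \to \rad P(S) \to P(S) \to S \to 0$ reduces to a sequence that is \emph{not} a projective cover sequence over $kG$ — it has the extra $\O$-torsion-free-but-reducing structure. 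More precisely, $P(S)$ has $\O$-rank equal to $\sum_i (\dim_k S_i)(\text{Cartan entry})$ contributions, but the cleanest route avoids Cartan numbers entirely.

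Instead, first I would establish that $\overline{RS} \cong S \oplus S$. By the Remark following Lemma \ref{exp2} (or by Proposition \ref{rk}, which is available later but the weaker statement suffices here), $S$ is a summand of $\overline{RS}$. To get the full structure, use that $RS \simeq \oo^{-1}(\oo S)$ and that $\oo S = \rad P(S)$. Reducing the defining sequence of $\oo S$ mod $\pi$ gives a short exact sequence of $kG$-modules $0 \to \overline{\oo S} \to \overline{P(S)} \to S \to 0$; comparing with the genuine $kG$-projective cover sequence $0 \to \ok S \to \overline{P(S)} \to S \to 0$ and using Schanuel shows $\overline{\oo S} \cong \ok S$. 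Then reducing the defining sequence of $\oo^{-1}(\oo S)$, namely $0 \to \oo S \to P(\oo S) \to \oo^{-1}\oo S \to 0$, mod $\pi$ yields $0 \to \overline{\oo S} \to \overline{P(\oo S)} \to \overline{RS} \to 0$, i.e. $0 \to \ok S \to \overline{P(\oo S)} \to \overline{RS} \to 0$. Now $\overline{P(\oo S)}$ is the projective cover of $\overline{\oo S} = \ok S$ over $kG$ (here one checks $P(\oo S)$ has no projective summands, using Thompson's criterion, so that $\overline{P(\oo S)} = P(\ok S) = P(S)$ since $\ok S$ and $S$ have the same head — the head of $\ok S = \rad P(S)/\rad^2$-type data, but in general $P(\ok S)$ need not equal $P(S)$; this needs care). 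Comparing with $0 \to \ok S \to P(\ok S) \to \ok^{-1}\ok S \to 0$ and Schanuel again gives $\overline{RS} \oplus P(\ok S) \cong \ok^{-1}\ok S \oplus \overline{P(\oo S)}$, and since both middle terms are $P(S)$ (as $S$ is simple, $\ok^{-1}\ok S \cong S$ in $\stab$ with $P(\ok^{-1}\ok S \text{ lifted}) $ trivial), we deduce $\overline{RS} \cong S \oplus S$.

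Once $\overline{RS} \cong S \oplus S$ is established, the conclusion is immediate: $RS$ is an $\og$-lattice with no projective summands (by convention and by Thompson, since $\overline{RS} = S \oplus S$ has no projective summand as $S$ is non-projective), so its projective cover satisfies $\overline{P(RS)} = P(\overline{RS}) = P(S \oplus S) = P(S) \oplus P(S)$, and hence $P(RS) \cong P(S) \oplus P(S)$ since projective $\og$-modules are determined by their reductions mod $\pi$. The main obstacle is the bookkeeping in the middle paragraph: correctly identifying $\overline{P(\oo S)}$ and handling the possible discrepancy between $P(S)$ and $P(\ok S)$ (these agree because $\ok S \cong \rad P(S)$ has head isomorphic to $\rad P(S)/\rad^2 P(S)$, which need \emph{not} be $S$) — so in fact I expect the honest argument to track the head of $\rad P(S)$ and use that $\oo^{-1}$ of a lattice with a given reduction has a reduction computed from the $kG$-Heller of that reduction, making the rank/multiplicity count the crux. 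A cleaner alternative worth trying first: show directly that $\shom(RS, T) \cong \shom(S,T) \oplus \shom(S,T)$-type formulas via the adjunction $(L,R)$ fail to be available since $T$ ranges over modules not lattices, so the reduction-mod-$\pi$ route above is likely the intended one.
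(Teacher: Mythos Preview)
Your argument breaks at the very first reduction step. The sequence $0\to \oo S\to P(S)\to S\to 0$ is \emph{not} $\O$-split: $S$ is a torsion $\O$-module, so no surjection from a lattice onto $S$ can be $\O$-split, and in particular $\rk_\O(\oo S)=\rk_\O P(S)$, not $\rk_\O P(S)-\dim_k S$. Tensoring with $k$ therefore produces the four-term sequence
\[
0\longrightarrow \tor_1^{\O}(S,k)\longrightarrow \overline{\oo S}\longrightarrow \overline{P(S)}\longrightarrow S\longrightarrow 0,
\]
and since $\tor_1^{\O}(S,k)\cong S$, the module $\overline{\oo S}$ is an \emph{extension} of $\ok S$ by $S$, not $\ok S$ itself. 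Your Schanuel comparison collapses here, and everything downstream fails. In particular the target statement $\overline{RS}\cong S\oplus S$ is simply false: already in the ramified case Proposition~\ref{rk} gives $\overline{RS}\cong S\oplus\ok^{-1}S$, and $\ok^{-1}S$ is not isomorphic to $S$ in general (for $G=C_p$ and $S=k$ one has $\dim_k\overline{RS}=p$, not $2$).

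The paper avoids the reduction of $RS$ altogether and instead computes the \emph{socle} of $\overline{\oo S}$, using that $P(RS)$ is by construction the injective hull of $\oo S$. From the four-term sequence above, $\soc(\overline{\oo S})$ sits inside the preimage of $\soc(\ok S)=S$, a length-two module with both composition factors $S$; so $\soc(\overline{\oo S})$ is either $S$ or $S\oplus S$. A separate rank argument (there is a non-zero map $RS\to S$, so $P(S)\mid P(RS)$; but $\rk_\O(\oo S)=\rk_\O P(S)$ and $\oo S$ is non-projective, so the injective hull of $\oo S$ strictly contains $P(S)$) shows $P(RS)$ is decomposable, ruling out $\soc(\overline{\oo S})=S$. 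Hence $\soc(\overline{\oo S})=S\oplus S$ and its injective hull is $P(S)\oplus P(S)$. The moral: compute the socle of $\overline{\oo S}$, not the head of $\overline{RS}$.
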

\begin{proof}
We first show that $P(RS)$ is decomposable.  Since $RS$ is a quotient of $P(RS)$, and there is a non-zero map $RS\r S$, it follows that $P(S)$ is a summand of $P(RS)$.  Since our convention dictates that $P(RS)$ is the $\O$-injective hull of $\oo S$, and because $\mbox{Rank}_{\O}(\oo S)=\mbox{Rank}_{\O}(P(S))$, the fact that $\oo S$ is non-projective shows that there is some other summand of $P(RS)$.  Now, consider the exact sequence $0\r\oo S\r P(S) \r S \r 0$.  On tensoring with $k$, we have the exact sequence
\[0\r S\stackrel{\varphi}{\r} \overline{\oo S} \stackrel{\psi}{\r} \overline{P(S)} \r S \r 0,\]
where exactness on the left is due to $P(S)$ being flat as an $\O$-module and the isomorphism $\mbox{Tor}_1^{\O}(k,S)\cong S$.  It follows that $\im(\psi)\cong\ok S$, and thus $\overline{\oo S}$ is an extension of $\ok S$ by $S$.  Since  $\overline{P(RS)}$ is isomorphic to the injective hull of $\overline{\oo S}$ and is decomposable, it follows that $\soc(\overline{\oo S})=S\oplus S$, and that $P(RS)\cong P(S) \oplus P(S)$.
\end{proof}

\begin{lem} \label{rhead}
Let $S,T$ be non-projective simple modules.  Then 
\[\dim_k\hom_{\sG}(RS,RT)=\left\{\begin{array}{ll} 2 & \mbox{if} \ \ \ S=T \\ 0 & \mbox{otherwise.} \end{array}\right.\]
\end{lem}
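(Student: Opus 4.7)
The plan is to use the adjunction of Theorem \ref{kag} to trade the target $RT$ for the simple $kG$-module $T$. Applying the natural isomorphism $\hom_{\sG}(LM,N)\cong\hom_{\slG}(M,RN)$ with $M=RS$ and $N=T$, and using that $\slG$ is a full subcategory of $\sG$, we obtain
\[\hom_{\sG}(RS,RT)\cong\hom_{\slG}(RS,RT)\cong\hom_{\sG}(RS,T).\]
So it suffices to compute $\hom_{\sG}(RS,T)$.

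Next, I would translate $\hom_{\sG}(RS,T)$ into a stable $kG$-hom space. Since $T$ is annihilated by $\pi$, every $\og$- or $\O$-linear map out of $RS$ into $T$ factors uniquely through $\overline{RS}=RS/\pi RS$, giving $\hom_{\og}(RS,T)\cong\hom_{kG}(\overline{RS},T)$ and $\hom_{\O}(RS,T)\cong\hom_{k}(\overline{RS},T)$. A quick check shows that under these identifications the $G$-trace on the $\og$-side coincides with the $G$-trace on the $kG$-side, so the morphisms factoring through weakly injective $\og$-modules correspond exactly to those factoring through projective $kG$-modules. Hence
\[\hom_{\sG}(RS,T)\cong\hom_{\stab\md kG}(\overline{RS},T).\]

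By Lemma \ref{multisoc} the $\og$-projective cover of $RS$ is $P(S)\oplus P(S)$, so the head of $\overline{RS}$ is $S\oplus S$. Any $kG$-map from $\overline{RS}$ to a simple module $T$ factors through this head, yielding $\hom_{kG}(\overline{RS},T)\cong\hom_{kG}(S\oplus S,T)$, which is $k^{2}$ if $S=T$ and zero otherwise. The case $S\neq T$ is thus immediate, and the remaining task is to show that, when $S=T$, no non-zero $kG$-map $\overline{RS}\r S$ factors through a projective.

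This last step is the main obstacle. Any such factorization may be replaced by one through the projective cover $\eta:\overline{P(S)}\r S$, reducing to showing that a non-zero $f:\overline{RS}\r S$ has no lift $\tilde f:\overline{RS}\r\overline{P(S)}$ with $\eta\tilde f=f$. From the sequences $0\r\oo S\r P(S)\r S\r 0$ and $0\r\oo S\r P(S)^{2}\r RS\r 0$ (the latter from Lemma \ref{multisoc}), an $\O$-rank computation yields $\rk_{\O}(RS)=\rk_{\O}(P(S))$, so $\dim_{k}\overline{RS}=\dim_{k}\overline{P(S)}$. If $\tilde f$ were surjective it would thus be an isomorphism, which is impossible since $\he\overline{RS}=S\oplus S\not\cong S=\he\overline{P(S)}$. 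Hence $\tilde f$ is not surjective, so by Nakayama its image lies in $\rad\overline{P(S)}=\ker\eta$, forcing $f=0$.
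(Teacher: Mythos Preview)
Your proof is correct and follows the same path as the paper's: reduce via the adjunction to $\shom_{\sG}(RS,T)$, then invoke Lemma~\ref{multisoc} to see $\he(RS)\cong S\oplus S$. The paper's own proof is a single sentence (``This is immediate, since Lemma~\ref{multisoc} shows that $\he(RS)\cong S\oplus S$''), so you have supplied the details it leaves implicit --- in particular the verification that no non-zero map $RS\to S$ is stably trivial, which you handle cleanly via the rank comparison $\rk_{\O}RS=\rk_{\O}P(S)$ and the head mismatch. One minor streamlining: you can run that last argument directly at the $\og$-level (any factorization of $f:RS\to S$ through a projective reduces to one through $P(S)\to S$; a surjection $RS\to P(S)$ of equal-rank lattices would be an isomorphism, contradicting projective-freeness of $RS$), which avoids the detour through $\stab\md kG$ and the trace-map identification.
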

\begin{proof}
This is immediate, since Lemma \ref{multisoc} shows that $\mbox{Head}(RS)\cong S\oplus S$.
\end{proof}

It follows from this lemma that if $RS$ is decomposable for some simple module $S$, say $RS\cong B'\oplus B''$, then $\send(B')=\send(B'')=k$.  By the characterisation of Kn\"{o}rr lattices given by Carlson and Jones in \cite[Proposition 4.3]{C-J}, then in $M$ is a Kn\"{o}rr lattice of exponent $\pi$, then $\send(M)=k$.  The main result of this section will show that  

For a simple module $S$, let $\Xi(S)$ be number of distinct indecomposable summands of $P(S)\otimes K$ (that is, the number of distinct irreducible characters $P(S)$ affords).
\begin{lem} \label{basdass}
If $\Xi(S)>2$, $RS$ is indecomposable.
\end{lem}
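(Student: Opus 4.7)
The plan is to argue by contradiction: assume $RS = B' \oplus B''$ with both summands non-zero. Because $RS$ is projective-free by convention, neither summand is projective, so the identities $\mathrm{Id}_{B'}$ and $\mathrm{Id}_{B''}$ represent non-zero elements of $\send$. Arranging $\send(B' \oplus B'')$ as the usual $2 \times 2$ matrix with entries $\send(B_i, B_j)$ and invoking Lemma \ref{rhead} (which gives $\dim_k \send(RS) = 2$), it follows that $\send(B') = \send(B'') = k$ and $\send(B', B'') = \send(B'', B') = 0$. Both $B'$ and $B''$ are then indecomposable $\og$-lattices of exponent $\pi$, the latter by Proposition \ref{kablah}.

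With $a = 1$ the socle condition in the Carlson--Jones characterisation of Kn\"{o}rr lattices is automatic from $\send = k$, so, granted absolute indecomposability, $B'$ and $B''$ are Kn\"{o}rr lattices. Kn\"{o}rr's theorem from \cite{knorr} then provides the crucial input: the $KG$-character of a virtually irreducible $\og$-lattice is a positive integer multiple of a single irreducible character. Applied to our two summands, this gives $\chi_{B'} = n_1 \chi_1$ and $\chi_{B''} = n_2 \chi_2$ for irreducible $KG$-characters $\chi_1, \chi_2$ and positive integers $n_1, n_2$.

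To finish, compare with $\chi_{P(S)}$. Tensoring the exact sequence $0 \to \oo S \to P(S) \oplus P(S) \to RS \to 0$ from the proof of Lemma \ref{multisoc} with $K$, and using that $S$ is $\O$-torsion so $\chi_{\oo S} = \chi_{P(S)}$, one obtains $\chi_{RS} = \chi_{P(S)}$. Hence
\[
\chi_{P(S)} = \chi_{B'} + \chi_{B''} = n_1 \chi_1 + n_2 \chi_2
\]
has at most two distinct irreducible constituents, so $\Xi(S) \leq 2$, contradicting the hypothesis. The main obstacle in carrying this out is the middle step: one must verify that $B'$ and $B''$ are absolutely indecomposable in order to apply the Carlson--Jones criterion, either by tracking how $\send$ and the projective-free convention behave under extension of scalars, or by directly checking the Kn\"{o}rr trace condition using $\exp B' = \pi$.
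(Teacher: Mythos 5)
Your argument takes a genuinely different route from the paper. The paper fixes an irreducible constituent $\chi$ of $\chi_{P(S)}$ and, via Thompson's theorem, produces an $\O$-form $B$ of $\chi$ with indecomposable head $S$; the resulting non-zero map $B\r RS$ then lands non-projectively in one summand $M_i$, and since $\send(M_i)=k$ makes a projective cover of $M_i$ right almost split (Proposition \ref{arl}), one gets $B\cong M_i$. As distinct constituents give non-isomorphic $\O$-forms, at most two constituents can appear. You instead aim to recognise the summands $B',B''$ as Kn\"{o}rr lattices of exponent $\pi$ and then invoke Kn\"{o}rr's theorem that virtually irreducible lattices afford isotypic $KG$-characters; combined with your (correct) computation $\chi_{RS}=\chi_{P(S)}$, this caps the number of distinct constituents at two. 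Your route pulls in heavier external input (Kn\"{o}rr's isotypy theorem and the Carlson--Jones criterion), but dovetails nicely with the block-theoretic analysis the section builds towards in Proposition \ref{tooeasy}; the paper's version is lighter and self-contained given Lemma \ref{rhead}, Proposition \ref{arl}, and Thompson's theorem.

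The gap you flag is real and, as written, unfilled. Applying the Carlson--Jones characterisation quoted in the introduction requires $B'$ and $B''$ to be \emph{absolutely} indecomposable, whereas $\send(B')=k$ only hands you indecomposability over $\O$ directly. The repair is routine but must actually be carried out: since $B'$ is a summand of $RS$ it is indecomposable, so $\End_{\og}(B')$ is local; the hypothesis $\send(B')\cong k$ then forces $\End_{\og}(B')/J(\End_{\og}(B'))\cong k$, and for any finite extension $\O'\supset\O$ the ring $\End_{\O'G}(B'\otimes_\O\O')\cong\End_{\og}(B')\otimes_\O\O'$ remains a complete local ring with residue field the residue field of $\O'$, so $B'\otimes\O'$ stays indecomposable. (Alternatively, one could verify the Kn\"{o}rr trace condition directly, as you suggest, since Kn\"{o}rr's isotypy theorem does not itself presuppose absolute indecomposability.) Until one of these routes is spelled out, the proposal does not verify the hypotheses of the Carlson--Jones criterion and the argument is incomplete.
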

\begin{proof}
Suppose $RS$ is decomposable, so that we may write $RS=M_1\oplus M_2$ by Proposition \ref{rhead} (where $M_i$ has simple head isomorphic to $S$).  Let $\chi$ be a simple character appearing in $P(S)\otimes K$.  By \cite{thomp}[Theorem 1], there is an $\O$-form $B$ of $\chi$ with indecomposable head $S$.  Hence we have a non-projective map $B\r S$, and hence a map $B\r RS$.  Since $B$ is indecomposable, any non-projective map $B\r M_i$ is an isomorphism by Proposition \ref{arl}.  Without loss of generality, we have $B\cong M_1$.  Similarly, if $\chi'$ is distinct from $\chi$, and 
$B'$ is an $\O$-form of $\chi'$, then $B'\cong M_2$.  Thus $\Xi(S)\leq 2$.      
\end{proof}

\begin{lem} \label{vertexp}
Let $M$ be an $\og$-lattice which is projective relative to some subgroup $H\leq G$.  Then $\exp(M)=\exp(M\d_H)$.  In particular, if $Q$ is a vertex of $M$, $\exp(M)\leq\pi^{\nu(|Q|)}$.
\end{lem}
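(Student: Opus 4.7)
The plan is to reduce the statement to two symmetric facts about how the exponent behaves under restriction and induction along $H\leq G$, and then to read off the ``in particular'' clause by applying the main claim with $H=Q$ and observing that $|Q|\cdot Id_{M\d_Q}$ is already a transfer from the trivial subgroup.

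First I would establish the inequality $\exp(M\d_H)\leq \exp(M)$. The key observation is that restriction sends weakly injective $\og$-modules to weakly injective $\O H$-modules: if $W$ is a summand of $N\otimes_\O \og$, then $W\d_H$ is a summand of $N\otimes_\O (\og\d_H)$, and $\og\d_H$ is free as an $\O H$-module on a set of coset representatives, so $N\otimes_\O(\og\d_H)$ is a sum of copies of $N\otimes_\O \O H$, which is weakly injective over $\O H$. Consequently, any factorisation $\pi^a Id_M=\beta\alpha$ through a weakly injective $\og$-module restricts to a factorisation of $\pi^a Id_{M\d_H}$ through a weakly injective $\O H$-module.

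Next I would establish the reverse inequality using relative $H$-projectivity. By Higman's criterion, $M$ is a summand of $M\d_H\u^G$, so there are $\og$-maps $\sigma:M\r M\d_H\u^G$ and $\tau:M\d_H\u^G\r M$ with $\tau\sigma=Id_M$. If $\pi^b Id_{M\d_H}=\beta\alpha$ factors through a weakly injective $\O H$-module $W$, then induction is exact and preserves weakly injectives (it sends $N\otimes_\O \O H$ to $N\otimes_\O \og$), so $\alpha\u^G$ and $\beta\u^G$ give a factorisation of $\pi^b Id_{M\d_H\u^G}$ through the weakly injective $\og$-module $W\u^G$. Composing with $\sigma$ and $\tau$ yields a factorisation of $\pi^b Id_M$ through $W\u^G$, so $\exp(M)\leq \pi^b$. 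Combining the two directions gives $\exp(M)=\exp(M\d_H)$.

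For the ``in particular'' clause, I would take $H=Q$ a vertex of $M$ and apply the main identity to reduce to bounding $\exp(M\d_Q)$. The standard computation $\mbox{Tr}^Q_1(Id_{M\d_Q})(m)=\sum_{q\in Q} q\cdot q^{-1}m=|Q|m$ shows that $|Q|\cdot Id_{M\d_Q}$ factors through the weakly injective $\O Q$-module $(M\d_Q)\otimes_\O \O Q$ via the canonical maps $\iota_{M\d_Q}$ and $\theta_{M\d_Q}$ recalled in Section 2. Hence $\exp(M\d_Q)$ divides $|Q|$ in $\O$, giving $\exp(M\d_Q)\leq \pi^{\nu(|Q|)}$ and therefore $\exp(M)\leq \pi^{\nu(|Q|)}$.

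The only mildly delicate point is the claim that induction and restriction preserve weakly injective modules in the Frobenius structure of Section 2; everything else is a direct manipulation of factorisations using the fact that, after restricting to $H$, multiplication by $\pi^b$ on $M\d_H$ agrees with the composite $\beta\alpha$ up to a weakly injective correction term.
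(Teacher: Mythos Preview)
Your proof is correct and follows essentially the same strategy as the paper: establish $\exp(M\d_H)\leq\exp(M)$ by observing restriction preserves weakly injectives (the paper cites this from \cite{C-J}), then use relative $H$-projectivity for the reverse inequality, and finally handle the vertex clause via $\mbox{Tr}^Q_1(Id)=|Q|\cdot Id$. The only cosmetic difference is that for the reverse inequality you invoke Higman's criterion and work with the splitting $M\mid M\d_H\u^G$ together with induced factorisations, whereas the paper uses the equivalent trace-map formulation directly: writing $Id_M=\mbox{Tr}_{H,G}(a)$ for some $a\in\End_{\O H}(M\d_H)$ and noting that $\mbox{Tr}_{H,G}$ is $\O$-linear and sends projective maps to projective maps, so $\pi^n Id_M=\mbox{Tr}_{H,G}(\pi^n a)$ is projective when $\exp(M\d_H)=\pi^n$.
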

\begin{proof}
By \cite[Lemma 3.1]{C-J}, $\exp(M\d_{H'})\leq \exp(M)$ for all subgroups $H'\leq G$, so we just to to verify the reverse inequality.  If $M$ is projective relative to $H$, then the identity in $\End_{\og}(M)$ satisfies $Id_M=\mbox{Tr}_{H,G}(a)$ for some $a\in\End_{\O H}(M\d_H)$.  If $\exp(M\d_H)=\pi^n$ it follows that $\pi^n\cdot Id_M$ is a projective map by the $\O$-linearity of $\mbox{Tr}_{H,G}$ and the fact that the relative trace map sends projective maps to projective maps.  Thus $\exp(M)\leq\exp(M\d_H)$ as required.  Since the vertices of $M$ are the minimal subgroups of $G$ that $M$ is projective relative to, the final statement is clear.
\end{proof}

We can now show how restrictive the existence of a Kn\"{o}rr lattice of exponent $\pi$ is.  Recall first that if $X$ is an irreducible lattice (we say a lattice is irreducible if $K\otimes X$ is indecomposable) in a block $B$ of defect $d$, then the \emph{height} of $X$ is the non-negative integer $h$ satisfying $\mbox{rank}_{\O}(X)=p^{a-d+h}$, where $p^a$ is the order of a Sylow $p$-subgroup of $G$.  It is immediate from the definition of defect of a block that there is always an irreducible lattice in $B$ of height zero.  

\begin{prop} \label{tooeasy}
Let $M$ be an $\og$-lattice with $\send(M)=k$ belonging to the block $B$.  Then every indecomposable non-projective lattice in $B$ has stable endomorphism ring isomorphic to $k$.  Further, the defect group of $B$ has order $p$ and $(p)=J(\O)$.
\end{prop}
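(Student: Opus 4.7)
The starting observation is that $\send(M)=k$, being a one-dimensional simple algebra, has socle (as a bimodule over itself) equal to the whole algebra and generated by $Id_M$. Hence $Id_M$ is the almost projective morphism for $M$. Pulling $Id_M\colon M\to M$ back along the projective cover $\pi_M\colon P(M)\to M$ recovers $P(M)$ itself, so the almost split sequence terminating at $M$ is the projective cover sequence
\[
0\to \Omega M \to P(M) \to M \to 0,
\]
whose middle term is $\og$-projective and thus vanishes in $\sG$.

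By the universal property of almost split sequences, every non-split morphism $f\colon X\to M$ from a non-projective indecomposable lattice $X$ factors through $\pi_M$ and is therefore projective, so $\hom_{\sG}(X,M)=0$ for every non-projective indecomposable $X\not\cong M$. Since $\Omega$ is a self-equivalence of $\sG$ preserving stable endomorphism rings, $\send(\Omega^{-1}M)=k$, and the analogous analysis of the almost split sequence ending at $\Omega^{-1}M$ gives $\hom_{\sG}(M,Y)=0$ for every non-projective indecomposable $Y\not\cong M$.

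For the first assertion, $\send(M)=k$ forces $\exp(M)=\pi$, so Proposition \ref{kablah} realises $M$ as a summand of $R\bar X$ for some indecomposable $kG$-module $\bar X$, which must then lie in $B$. Lemmas \ref{multisoc} and \ref{rhead} (for $\bar X$ simple) show that each indecomposable summand of $R\bar X$ has stable endomorphism ring $k$. Because $R$ commutes with $\Omega$, the summands of $R(\Omega^n\bar X)$ also have stable endomorphism ring $k$ for every $n\in\Z$. Since every non-projective indecomposable lattice in $B$ arises, up to Heller shift, as such a summand (the simple $kG$-modules in $B$ and their Heller orbits sweeping out the block), we obtain $\send(N)=k$ for every non-projective indecomposable $N$ in $B$.

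For the second and third assertions, every non-projective indecomposable $N\in B$ now has $\exp(N)=\pi$. By Lemma \ref{vertexp}, any vertex $Q_N$ of $N$ satisfies $\exp(N\d_{Q_N})=\pi$. A direct analysis of indecomposable $\O Q$-lattices (e.g.\ the trivial $\O Q$-lattice has exponent $\pi^{e\cdot v_p(|Q|)}$, and this exponent is preserved by the Heller operator, giving lower bounds on all exponents in the Heller orbit) shows that a non-projective indecomposable $\O Q$-lattice has exponent $\pi$ only when $|Q|=p$ and the ramification index $e$ equals $1$. So every vertex of a lattice in $B$ has order $p$, forcing the defect group $D$ to have order $p$, and $(p)=J(\O)$. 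The hardest step is the first assertion, since it demands a careful combination of the isolation of $M$ in $\sG$, block decomposition, and Heller-orbit arguments to verify that every indecomposable summand of $R\bar X$ for indecomposable $\bar X$ in $B$ has stable endomorphism ring $k$ and that these summands exhaust the non-projective indecomposables of $B$.
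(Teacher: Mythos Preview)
Your overall shape is reasonable, but there is a genuine gap at the heart of the first assertion, and the defect-group argument at the end is not sound as written.

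\textbf{The main gap.} You correctly observe that $M$ is a summand of $R\bar X$ for some $kG$-module $\bar X$ and that, when $\bar X=S$ is simple, Lemma~\ref{rhead} gives $\dim_k\send(RS)=2$, so $RS\simeq N'\oplus N''$ with $\send(N')=\send(N'')=k$. But you then assert that ``every non-projective indecomposable lattice in $B$ arises, up to Heller shift, as such a summand (the simple $kG$-modules in $B$ and their Heller orbits sweeping out the block)''. This is precisely what must be \emph{proved}, and it does not follow from anything you have established. Heller shifts of a single simple do not in general exhaust the simples of a block, let alone all indecomposable lattices. The paper closes this gap by a propagation argument through the radical layers of projective covers: once $RS$ decomposes, one shows $\rad P(S)\cong \oo N'\oplus\oo N''$ is projective-free, then for any simple $T$ appearing in $\rad(\overline{P(S)})/\rad^2(\overline{P(S)})$ there is a non-projective map $\rad P(S)\to RT$, forcing one of $\oo N',\oo N''$ to be a summand of $RT$ by Proposition~\ref{arl}, so $RT$ decomposes as well. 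Iterating over the Loewy layers reaches every simple in $B$. Only after this does one know that an arbitrary indecomposable $N$ in $B$, via a simple $T$ in its head and the map $N\to RT$, must be isomorphic to a summand of $RT$. Your sketch omits both the propagation step and the irreducibility/$\Xi(S)\le 2$ analysis used to see that $\rad P(S)$ is projective-free; without these the argument does not close. (A smaller point: you invoke Lemmas~\ref{multisoc} and~\ref{rhead} ``for $\bar X$ simple'' without saying why $\bar X$ may be taken simple; this is easily fixed by choosing $S$ in the head of $M$, but it should be said.)

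\textbf{The defect-group argument.} Lemma~\ref{vertexp} gives only the inequality $\exp(N)\le\pi^{\nu(|Q|)}$, which from $\exp(N)=\pi$ yields the vacuous $1\le\nu(|Q|)$. Your ``direct analysis'' would require a \emph{lower} bound on the exponent of $N\!\downarrow_Q$, but $N\!\downarrow_Q$ need not contain the trivial lattice as a summand, and knowing one lattice over $\O Q$ has large exponent says nothing about the particular summands that occur. The paper instead takes an irreducible height-zero lattice $X$ in $B$, for which $\exp(X)=\pi^{\nu(|D|)}$ by \cite[Lemma~2.2]{gm}; since every lattice in $B$ has exponent $\pi$, this forces $\nu(|D|)=1$, hence $|D|=p$ and $(p)=(\pi)$. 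You should replace your vertex argument with this.
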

\begin{proof}
Let $S$ be a simple module in the head of $M$.  Then there is a non-projective map $M\r RS$, and by Auslander-Reiten duality, a map $RS\r M$.  Thus $M$ is a summand of $RS$ (note that $M\not\cong RS$ since $\send(RS)$ is two dimensional), and hence $RS\simeq N'\oplus N''$, where $\send(N')=\send(N'')=k$. Let us show that $\rad(P(S))\cong \oo N'\oplus \oo N''$;  we only need to prove that $\rad(P(S))$ is projective-free.  First note that $N'$ is irreducible, since otherwise there would be some non-projective lattice $L$ as a quotient, and the canonical map $N'\r L$ would have to be a split epimorphism as it is not projective, a contradiction.  The same holds for $N''$.   Since $RS$ is decomposable, $\Xi(S)\leq 2$ by Lemma \ref{basdass}, so we can write $K\otimes P(S)=mX_1\oplus nX_2$ (where $m,n$ are the respective multiplicities of the $X_i$ as summands of $K\otimes P(S)$).  Then there is an $\O$-free (hence not weakly injective) quotient $B$ of $P(S)$ affording the same character as $mX_1$.  Since $B$ is a quotient of $P(S)$, there is a non-projective map $f:B\r RS$.  $B$ is indecomposable, so $f$ is a split monomorphism and therefore $m=1$.  Similarly, $n=1$.  It follows that any projective submodule of $\rad(P(S))$ not isomorphic to $P(S)$ (which necessarily has lower $\O$-rank than $P(S)$) is irreducible, which is a contradiction since $B$ has non-zero defect.  Thus $\rad(P(S))\cong \oo N'\oplus\oo N''$ is projective-free.  Let $T$ be some simple module in $\rad(\overline{P(S)})/\rad^2(\overline{P(S)})\cong\he(\ok S)$.  The exact sequence $0\r S \r \overline{\rad(P(S))}\r \overline{P(S)}\r S \r 0$ shows that $\ok S$, and hence $T$, is a quotient of $\rad(P(S))$.  This induces a non-projective $\rad(P(S))\r RT$, and it follows without loss of generality that $N'$ is a summand of $RT$, so that $RT$ is decomposable.  We may repeat this argument for each simple module in $\rad(\overline{P(S)})/\rad^2(\overline{P(S)})$, and consequently for each simple module in higher Loewy layers of $\overline{P(S)}$, to show that $RS'$ is decomposable for every composition factor $S'$ of $\overline{P(S)}$.  It follows that $\rad(P(S))$ is decomposable for every simple module in $B$.  Therefore, the only non-projective indecomposable $\og$-lattices appearing in $B$ are the summands of the radicals of the indecomposable projective modules in $B$.  In particular, every $B$-lattice has exponent at most $\pi$.  If $D$ is the defect group of $B$, and $X$ is an irreducible lattice of height zero, then by \cite[Lemma 2.2]{gm} $X$ has exponent $\pi^{\nu(|D|)}$. It follows that $|D|=p$ and $\nu(p)=1$.    
\end{proof}

The following theorem should be compared to \cite[Theorem 2.1]{jkm}, which uses an unramified coefficient ring in its hypothesis, and comes to the same conclusion.

\begin{thm}
Let $\O$ be a complete discrete valuation ring with uniformizer $\pi$, and let $B$ be a $p$-block of $\og$ with defect group $D$.  Then $B$ contains a Kn\"{o}rr lattice of exponent $\pi$ if and only if $|D|=p$ and $\O$ is unramified.  Furthermore, the existence of one Kn\"{o}rr lattice of exponent $\pi$ implies that every indecomposable non-projective lattice in $B$ is a Kn\"{o}rr lattice of exponent $\pi$.
\end{thm}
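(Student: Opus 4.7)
The plan is to combine the Carlson--Jones characterisation of Knörr lattices with Proposition \ref{tooeasy} for the forward direction, and to produce an explicit Knörr lattice from an irreducible character of height zero for the reverse direction.

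Suppose first that $M$ is a Knörr lattice of exponent $\pi$ in $B$. Since $a=1$, the Carlson--Jones criterion recalled in the introduction says that $Id_M$ generates the socle of $\send(M)$. Because $\pi \cdot Id_M$ is projective, $\send(M)$ is a local $k$-algebra, and $Id_M$ lying in its socle forces $J(\send(M)) = 0$; together with the absolute indecomposability of $M$ this gives $\send(M) = k$. Proposition \ref{tooeasy} then immediately yields $|D| = p$, $(p) = J(\O)$ (so $\O$ is unramified), and $\send(N) = k$ for every indecomposable non-projective $\og$-lattice $N$ in $B$. To promote each such $N$ to a Knörr lattice of exponent $\pi$, and thereby establish the \emph{furthermore} statement simultaneously, I would argue three steps: (i) $N$ is absolutely indecomposable, since $\End_{\og}(N)$ is local (as $N$ is indecomposable and $\og$ is module-finite over the complete Noetherian ring $\O$), $\phom(N,N) \subseteq J(\End_{\og}(N))$ (no map through a projective is an isomorphism of the non-projective $N$), so the surjection $k = \End_{\og}(N)/\phom(N,N) \rep \End_{\og}(N)/J(\End_{\og}(N))$ is forced to be an isomorphism; (ii) $\exp(N) \leq \pi^{\nu(p)} = \pi$ by Lemma \ref{vertexp} (the vertex of $N$ is contained in $D$), with equality from non-projectivity; (iii) $Id_N$ trivially generates the socle of $\send(N) = k$, so the Carlson--Jones criterion certifies $N$ as a Knörr lattice.

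For the reverse direction, assume $|D| = p$ and $\O$ is unramified. By the definition of the defect of a block, $B$ contains an ordinary irreducible character $\chi$ of height zero, so $\chi(1) = p^{a-1}m$ with $p \nmid m$, where $p^a$ is the order of a Sylow $p$-subgroup. Let $L$ be any full $\og$-invariant sublattice of the simple $KG$-module affording $\chi$; this is an irreducible lattice, and hence a Knörr lattice by the list of examples in the introduction. Since the $K$-character of any $\og$-projective module vanishes on $p$-singular elements, its $\O$-rank is divisible by $p^a$; thus $L$ cannot be projective. Lemma \ref{vertexp} now gives $\exp(L) \leq \pi^{\nu(p)} = \pi$, and non-projectivity forces $\exp(L) = \pi$.

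The only point requiring a little care is the passage from $\send(N) = k$ to absolute indecomposability in step (i); the remainder is a careful assembly of Proposition \ref{tooeasy}, Lemma \ref{vertexp}, the Carlson--Jones characterisation, and standard divisibility properties of the characters of projective $\og$-modules, so I anticipate no serious technical obstacle.
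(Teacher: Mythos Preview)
Your proof is correct and follows essentially the same route as the paper. The paper's own argument is terser---it simply invokes Proposition \ref{tooeasy} for the forward implication and the furthermore clause, and for the converse cites Lemma \ref{vertexp} together with the existence of an irreducible lattice---whereas you spell out the passage from $\send(N)=k$ to ``$N$ is a Kn\"orr lattice of exponent $\pi$'' (your steps (i)--(iii)) and justify non-projectivity of the height-zero irreducible lattice explicitly; these are exactly the details the paper leaves to the reader.
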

\begin{proof}
The `if' direction and final assertion is Proposition \ref{tooeasy}.  For the converse, if $\O$ is unramified and $|D|=p$, then every non-projective lattice in $B$ has exponent $\pi$ by Lemma \ref{vertexp}, and hence any irreducible lattice in $B$ (which always exist) is a Kn\"{o}rr lattice of exponent $\pi$. 
\end{proof}

\begin{prop} \label{sky}
Let $M$ be a Kn\"{o}rr lattice of exponent $\pi$.  Then the middle term of the almost split sequence terminating in $M$ is indecomposable.
\end{prop}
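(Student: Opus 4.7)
The plan is to argue by contradiction. Since $M$ is a Kn\"orr lattice of exponent $\pi$ (so $a=1$), the class $\pi^{a-1}Id_M = Id_M$ generates the socle of $\send(M)$ as a bimodule over itself; because $Id_M$ is a unit this forces $\rad(\send(M))=0$, and combined with $\send(M)$ being local (from indecomposability of $M$) and having residue field $k$ (from absolute indecomposability), we obtain $\send(M)=k$. This is the crucial structural input that unlocks Proposition \ref{arl}.

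Now suppose the middle term of the almost split sequence $0 \r \oo M \r Y_M \stackrel{\psi_M}{\r} M \r 0$ admits a nontrivial decomposition $Y_M = Y_1 \oplus Y_2$, with $\psi_M=(\psi_1,\psi_2)$. If some $\psi_i$ is non-projective, then Proposition \ref{arl} applies at $M$ (precisely because $\send(M)=k$) and forces $\psi_i$ to be a split epimorphism of lattices. Composing a section $M \r Y_i$ with the inclusion $Y_i \hookrightarrow Y_M$ then produces a section of $\psi_M$, contradicting the non-split nature of the almost split sequence.

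It remains to rule out the case in which both $\psi_1$ and $\psi_2$ factor through projectives, so that $\psi_M \simeq 0$ in $\sG$. Rotating the triangle $\oo M \r Y_M \stackrel{0}{\r} M \r \oo M[1]$ in $\sG$ and using that the mapping cone of the zero map $Y_M \r M$ is $M \oplus Y_M[1]$ yields $\oo M \simeq Y_M \oplus \oo M$ in $\sG$. Krull--Schmidt cancellation then forces $Y_M \simeq 0$ in $\sG$, i.e., $Y_M$ is projective as a lattice. Such a projective surjecting onto the indecomposable $M$ must decompose as $Y_M \cong P(M) \oplus Y'$ with $Y' \subseteq \oo M$; since $\oo M$ is indecomposable and non-projective it admits no projective summand, so $Y'=0$ and $Y_M = P(M)$. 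But $P(M)$ is indecomposable because $M$ is, contradicting the assumed decomposition.

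The first case is essentially immediate from Proposition \ref{arl} once $\send(M)=k$ is in hand; the main obstacle is the second case, where one must carry out the triangle manipulation in $\sG$, invoke Krull--Schmidt cancellation to reduce to $Y_M$ being projective, and then use the uniqueness of the projective cover together with indecomposability of $\oo M$ to pin $Y_M$ down as $P(M)$.
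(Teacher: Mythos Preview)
Your argument has a genuine gap at the very end of Case 2: the assertion that ``$P(M)$ is indecomposable because $M$ is'' is false in general. An indecomposable lattice can perfectly well have a non-simple head, and then its projective cover is decomposable. This is not a technicality --- it is precisely the content of the proposition. Once you know $\send(M)=k$, the proof of Proposition~\ref{arl} already tells you that the projective cover $P(M)\to M$ is right almost split, so the middle term of the almost split sequence \emph{is} $P(M)$; your Case~1/Case~2 dichotomy and the triangle manipulation merely re-derive this. The whole question therefore reduces to showing that $\he(M)$ is simple, and you have not addressed this.

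The paper supplies exactly the missing ingredient. From the proof of Proposition~\ref{tooeasy}, any simple $S$ in the head of $M$ yields a non-projective map $M\to RS$, which by Proposition~\ref{arl} must be a split monomorphism; since $\dim_k\send(RS)=2$ (Lemma~\ref{rhead}) while $\send(M)=k$, the inclusion is proper, so $M$ is a proper summand of $RS$. Lemma~\ref{rhead} (or Lemma~\ref{multisoc}) gives $\he(RS)\cong S\oplus S$, whence $\he(M)\cong S$ is simple and $P(M)=P(S)$ is indecomposable. Without this step your contradiction in Case~2 does not close: if $\he(M)$ were not simple, then $Y_M=P(M)$ would genuinely be decomposable and the almost split sequence would have a decomposable middle term.
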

\begin{proof}
It follows by the proof of Proposition \ref{tooeasy} that $M$ is a proper summand of $RS$ for some simple module $S$, and hence $\he(M)\cong S$ by Lemma \ref{rhead}.  Since $\send(M)\cong k$, it follows that a projective cover $P(S)\r M$ is right almost split.  
\end{proof}

\subsection{Almost split sequences for Kn\"{o}rr lattices}

We now turn to Kn\"{o}rr lattices of exponent greater than $\pi$.  Before proceeding we introduce some convenient notation.  We write $\O_b$ for the residue ring $\O/\pi^b\O$; for an $\og$-lattice $M$ we write $M_b$ for the residue module $M/\pi^bM$, for a homomorphism $\varphi:M\r N$ between $\og$-lattices we write $\varphi_b$ denotes the morphism $\varphi\otimes\O_b$, and we write $\Omega_b$ for the Heller operator on $\O_bG$-modules.

In \cite{jones}, A. Jones $\og$-lattices of the form $\oo(M_j)$, where $M$ is an $\og$-lattice.  Such a lattices are constructed as the pullback of $M\stackrel{\pi^jId_M}{\r}M$ along a projective cover $P\r M$, as per the following commutative diagram with exact rows and columns:
\begin{equation} \label{bjdiag}
\xymatrix{
          &                            &     0\ar[d]                           &         0 \ar[d]          &  \\
 0 \ar[r] & (\oo M) \ar[r]\ar@{=}[d] & \oo(M_j) \ar[r]\ar[d]                      & M \ar[r] \ar[d]^{\pi^jId_M}      & 0 \\
 0 \ar[r] & (\oo M) \ar[r] & P \ar[r]\ar[d]                      & M \ar[r] \ar[d] & 0 \\
          &                            &   M_j \ar[d]\ar@{=}[r]                &  M_j \ar[d]                & \\
          &                            &      0                                &            0               &
}
\end{equation}


Jones showed the following, and we include a sketch proof for convenience:
\begin{lem}\label{j-lad}\cite[Theorem 2.1(a-c)]{jones}
Let $B=\oo(M_j)$ for some non-projective $\og$-lattice $M$ whose exponent is $\pi^a$.  Then $B_b\cong M_b\oplus \oo(M)_b$ for all $b$, $\exp(B)=\pi^j$ for $1\leq j\leq a$, and $B=M\oplus\oo M$ for $j> a$. 
\end{lem}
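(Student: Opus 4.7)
I would work directly from the pullback description $B = \{(p, m) \in P \oplus M : \pi(p) = \pi^j m\}$ coming from diagram~(\ref{bjdiag}). For the assertion $B = M \oplus \oo M$ when $j > a$, the top row of the diagram exhibits $B$ as the extension of $M$ by $\oo M$ whose class in $\ext^1_{\og}(M, \oo M) \cong \send(M)$ is represented by $\pi^jId_M$; since $\pi^aId_M$ factors through a projective by definition of $\exp(M) = \pi^a$, so does $\pi^jId_M = \pi^{j-a}(\pi^aId_M)$, and hence the extension splits.

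For the decomposition $B_b \cong M_b \oplus (\oo M)_b$, fix an $\O$-linear section $s : M \to P$ of the projective cover, which exists because $M$ is $\O$-free. Every element of $B$ then admits a unique expression $(q + \pi^j s(m), m)$ with $q \in \oo M$ and $m \in M$, identifying $B \cong \oo M \oplus M$ as $\O$-modules; under this identification the $G$-action is block upper-triangular, with diagonal blocks recovering the natural actions on $\oo M$ and $M$ and off-diagonal term $\pi^j \delta_g$, where $\delta_g(m) = g s(m) - s(g m) \in \oo M$. Since this off-diagonal piece is divisible by $\pi^j$, it vanishes modulo $\pi^b$ whenever $b \leq j$, yielding the splitting of $B_b$ as an $\O_bG$-module; when $j \geq a$ the absolute splitting $B = M \oplus \oo M$ from the previous paragraph covers the remaining values of $b$.

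For $\exp(B) = \pi^j$ when $1 \leq j \leq a$, the upper bound $\exp(B) \leq \pi^j$ follows from the two explicit maps $\iota : B \hookrightarrow P$, $(p, m) \mapsto p$, and $\sigma : P \to B$, $p \mapsto (\pi^j p, \pi(p))$, which satisfy $\sigma\iota = \pi^jId_B$ and so factor $\pi^jId_B$ through the projective $P$. For the lower bound I would argue by contradiction: if $\pi^{j-1}Id_B$ were projective then, using the $b = j$ instance of the decomposition above, $\pi^{j-1}Id_{M_j}$ would be projective as an $\O_jG$-map; lifting a projective factorisation back to $\og$ yields $\og$-maps $M \to Q \to M$ whose composite is $\pi^{j-1}Id_M + \pi^j f$ for some $f \in \End_{\og}(M)$. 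Since $\pi f \in \pi\End_{\og}(M) \subseteq J(\End_{\og}(M))$, the factor $Id_M + \pi f$ is invertible, so $\pi^{j-1}Id_M$ is itself projective, contradicting $\exp(M) = \pi^a$.

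The main technical obstacle I anticipate is extending the splitting of $B_b$ to the range $b > j$ when $j < a$, where the cocycle $\pi^j \delta_g$ no longer vanishes mod $\pi^b$ and one must absorb the obstruction into a coboundary by refining $s$ with some auxiliary $\O$-linear $h : M \to \oo M$; fortunately only the instance $b = j$ is needed to bootstrap the exponent calculation and the global splitting, so those parts of the argument proceed as sketched regardless.
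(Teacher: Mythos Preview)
Your argument is correct and follows the same line as the paper's (very terse) proof: the paper also deduces the global splitting for $j>a$ from projectivity of $\pi^jId_M$, and the mod-$\pi^j$ splitting from the fact that $\pi^jId_M\otimes\O_j$ is projective (indeed zero), while declaring the exponent claim ``obvious from the diagram''---your maps $\iota,\sigma$ and the lifting contradiction simply make this explicit. Your caveat about the range $b>j$ with $j\leq a$ is well placed: the paper's proof (and the naturality remark immediately following the lemma) likewise only establishes the splitting of $B_b$ for $b\leq j$, which is all that is used downstream.
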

\begin{proof}
Most of this is obvious given the definition of exponent and the diagram \eqref{bjdiag}.  If $j> a$, the map $\pi^jId_M$ is projective, and hence the top row of \eqref{bjdiag} is split.  Moreover, for all $j$, the map $\pi^jId_M\otimes\O_j$ is projective, hence the top row of \eqref{bjdiag} splits modulo $(\pi^j)$.  
\end{proof}

When $M$ is projective-free, it follows from Lemma \ref{j-lad} that $B$ is also projective free.  Therefore, $B\cong R(\Omega_j(M_j))$ in the module category, and we will persist with this identification.  If $M$ is additionally a Kn\"{o}rr lattice, it is shown in \cite{C-J} that $\oo M$ is also a Kn\"{o}rr lattice (this is an easy consequence of the definition of Property E).  Hence if $M$ has exponent $\pi^a$, then $R(\Omega_{a-1}M_{a-1})$ is the middle term of the almost split sequence terminating in $M$.  We will given necessary and sufficient conditions (Theorem \ref{mdterm}) for $R(\Omega_{a-1}M_{a-1})$ to be indecomposable, based on the decomposability of $M_{a-1}$.  Geline and Mazza (\cite[Theorem 1.1]{gm}) show that the middle term of the almost split sequence terminating in the Kn\"{o}rr lattice $M$ provided $M/\pi M$ is indecomposable and $\exp(M)\geq 2$, and so Theorem \ref{mdterm} represents an extension of this result.  Our theorem follows from a much more general result, Proposition \ref{aindec}, which gives an indecomposability criterion for the lattices $B$ appearing in Lemma \ref{j-lad}.

Let us remark on the naturality of the splitting of $B_b$ in Lemma \ref{j-lad}.  Consider the following diagram, which is induced by tensoring the pullback diagram \eqref{bjdiag} by $\O_b$:

\[
\xymatrix{
          &                            &     0\ar[d]                           &         0 \ar[d]          &  \\
          &                            & \tor_1^{\O}(M, \O_b) \ar[d] \ar@{=}[r] & M_b  \ar[d]^{\simeq} &  \\  
 0 \ar[r] & (\oo M)_b \ar[r]\ar@{=}[d] & B_b \ar[r]\ar[d]                      & M_b \ar[r] \ar[d]^{\pi^bId_M\otimes \O_b}      & 0 \\
 0 \ar[r] & (\oo M)_b \ar[r] & P_b \ar[r]\ar[d]                      & M_b \ar[r] \ar[d]^{\simeq} & 0 \\
          &                            &   M_b \ar[d]\ar@{=}[r]                &  M_b \ar[d]                & \\
          &                            &      0                                &            0               &
}
\]
Since $\pi^bId_M\otimes \O_b=0$, it follows that the map $\tor_1^{\O}(M,\O_b)\r B_b$ is a splitting of $B_b\r M_b$.

\begin{lem}\label{msplaced2}
Let $M,N$ be a lattices with $\exp(N)\geq\exp(M)=\pi^a$, and let $b\leq a$.  Then the map on hom spaces induced by $R(\md_b)_b$ is given by
\begin{eqnarray*}
\shom(M_b,N_b) & \r & \shom(R(M_b)_b,R(N_b)_b) \\
\varphi & \mapsto & \begin{pmatrix} \varphi &\alpha_\varphi \\ \beta_\varphi & \Omega_b^{-1} \varphi \end{pmatrix},
\end{eqnarray*}
where the maps $\beta_\varphi$ and $\alpha_\varphi$ only depend on the class of $\varphi$ in $\sG$.
\end{lem}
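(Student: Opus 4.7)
The strategy is to decompose $R(M_b)_b$ via Jones's pullback construction (Lemma~\ref{j-lad}), then read off the matrix of $R(\varphi)_b$ from a projective lift of $\varphi$. The central computation rests on the naturality of the Tor connecting map that provides Jones's splitting.

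First I would identify $R(M_b)\simeq\Omega^{-1}(\oo(M_b))$ in $\slG$ and use the natural splitting noted in the remark after Lemma~\ref{j-lad}: namely $(\oo(M_b))_b\simeq M_b\oplus(\oo M)_b$ in $\sG$, where the $M_b$-summand is the image of the connecting map $\tor_1^\O(M_b,\O_b)\cong M_b\r(\oo(M_b))_b$. Reducing the $\og$-projective cover sequence of $M$ mod $\pi^b$ gives $(\oo M)_b\simeq\Omega_b M_b$ in the $\O_bG$-stable category, and since the weakly injective hull of $\oo(M_b)$ in $\slG$ reduces to a weakly injective $\O_bG$-module, applying $\Omega^{-1}$ yields a canonical identification $R(M_b)_b\simeq M_b\oplus\Omega_b^{-1}M_b$ in $\sG$. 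The same decomposition holds for $N$.

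Next, to evaluate $R(\varphi)_b$ in these decompositions, lift $\varphi:M_b\r N_b$ to a chain map $\tilde\varphi:P_M\r P_N$ between the $\og$-projective covers, so that $\oo\varphi=\tilde\varphi|_{\oo(M_b)}:\oo(M_b)\r\oo(N_b)$ and $R(\varphi)=\Omega^{-1}(\oo\varphi)$. Writing $(\oo\varphi)_b$ as a matrix with respect to the Jones splittings, naturality of the Tor connecting map forces the $M_b\r N_b$ component to equal $\varphi$ and the $M_b\r(\oo N)_b$ component to vanish, while the restriction of $\tilde\varphi_b$ to $(\oo M)_b\cong\Omega_b M_b$ is the $\O_bG$-syzygy $\Omega_b\varphi$. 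Applying $\Omega_b^{-1}$ componentwise and reordering the summands produces the matrix of the lemma, with the diagonal entries $\varphi$ and $\Omega_b^{-1}\varphi$.

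Finally, to verify that $\alpha_\varphi,\beta_\varphi$ depend only on the stable class of $\varphi$: any two lifts of $\varphi$ differ by a map $\xi:P_M\r P_N$ landing in $\oo(N_b)$, and its restriction to $\oo(M_b)$ factors through the $\og$-projective $P_M$. Reducing mod $\pi^b$, the induced difference in the matrix factors through the $\O_bG$-weakly injective module $(P_M)_b$, hence vanishes in $\sG$. The main obstacle is keeping the naturality clean: since $\varphi$ need not lift to any morphism $M\r N$ of lattices, one must work with the functoriality of the Tor long exact sequence applied to chain maps of projective covers, rather than with any direct functoriality of the assignment $M\mapsto\oo(M_b)$ on morphisms of lattices.
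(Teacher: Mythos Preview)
Your approach is essentially the same as the paper's: both compute the matrix of $(\oo\varphi)_b$ with respect to Jones's splitting $(\oo(M_b))_b\cong M_b\oplus(\oo M)_b$, identify the diagonal entries as $\varphi$ and $\Omega_b\varphi$ via naturality of the $\tor_1^\O(-,\O_b)$ connecting map, and then apply $\Omega_b^{-1}$. The paper phrases well-definedness as a consequence of the injectivity of $\shom(M_b,N_b)\r\shom(R(M_b),R(N_b))$ (proved via Lemma~\ref{breakdownsucka}), while you argue directly that two projective lifts of $\varphi$ differ on $\oo(M_b)$ by a map factoring through $P_M$; both arguments reach the same conclusion. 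Your observation that the $M_b\r(\oo N)_b$ component of $(\oo\varphi)_b$ vanishes at the module level is correct and slightly sharper than what the paper records, though after applying $\Omega_b^{-1}$ (which is only defined stably) this triangularity need not persist, which is why the final statement allows both off-diagonal terms.
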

\begin{proof}
Let $P\stackrel{\theta}{\r} M_b$ be an $\og$-projective cover of $M_b$, $Q\stackrel{\varpi}{\r} N_b$ and $\og$-projective cover of $N_b$.  We first show that the map $\shom(M_b,N_b)\r\shom(R(M_b),R(N_b))$ induced by $R$ is injective.  For this, it is clear that we need only check that a non-projective map $\varphi:M_b\r N_b$ induces a non-projective map $\oo(\varphi):\oo (M_b)\r \oo (N_b)$.  Consider the commutative diagram with exact rows
\[
\xymatrix{
0\ar[r] & \tor_1^{\O}(M_b,\O_b) \ar[r] \ar[d]^{\tor_1^{\O}(\varphi,\O_b)} & (\oo M)_b \ar[r] \ar[d]^{\oo(\varphi)_b} & P_b \ar[r]^{\theta_b} \ar[d] & M_b \ar[r] \ar[d]^\varphi & 0 \\
0\ar[r] & \tor_1^{\O}(N_b,\O_b) \ar[r]                                 & (\oo N)_b\ar[r]                          & Q_b \ar[r]^{\varpi_b}        & N_b \ar[r]                & 0
}
\]
induced by $\varphi$. Note that by Lemma \ref{breakdownsucka} it is enough to check that 
\[\oo(\varphi)_b=\left(\begin{array}{cc} \alpha & \beta \\ \gamma & \delta\end{array}\right):M_b\oplus \Omega_b M_b\r N_b\oplus \Omega_b N_b\]
is not projective.
The diagram above induces the following morphism of sequences
\[
\xymatrix{
0\ar[r] & \tor_1^{\O}(M_b,\O_b) \ar[r] \ar[d]^{\tor_1^{\O}(\varphi_b,\O_b)} & (\oo M)_b \ar[r] \ar[d]^{\oo(\varphi)_b} & \Omega_b M_b \ar[r] \ar[d]^{\Omega_b\varphi} & 0 \\
0\ar[r] & \tor_1^{\O}(N_b,\O_b) \ar[r]  & (\oo N)_b \ar[r]  &  \Omega_b N_b \ar[r]  & 0.
}
\]
Since $\tor_1^{\O}(-,\O_b)$ is naturally isomorphic to the identity functor on $\O_bG$-modules, it follows that this sequence is split, and hence $\alpha=\varphi$ and $\delta=\Omega_b\varphi$.  As $\varphi$ was non-projective by design, it follows that $\oo(\varphi)$ is non-projective too.

Therefore, $\oo(\varphi)_b$ is determined completely by the class of $\varphi$ in $\sG$, and the lemma follows as $\Omega_b^{-1}(\oo(\varphi)_b)=R(\varphi)_b$.
\end{proof}

Let $n=\nu(|G|)$. Maranda \cite{mar} has shown that a lattice is indecomposable if and only if it is indecomposable modulo $(\pi^{2n+1})$.  Using similar methods (see \cite[p.540]{c-r} for more details), one can show that a lattice $M$ of exponent $\pi^a$ is indecomposable if and only if $M_{a+1}$ is indecomposable.  The following shows that this is the best possible result of this type (Corollary \ref{werty} shows that all indecomposable lattices of exponent $\pi$ are decomposable modulo $(\pi)$ when $(p)\subseteq (\pi^2)$).

\begin{prop} \label{pker}
Let $M$ be an indecomposable $\og$-lattice of exponent $\pi^{a}$.  Then $M_a$ has at most two summands.  
\end{prop}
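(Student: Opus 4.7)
The plan is as follows. Starting from a decomposition $M_a = N_1\oplus\cdots\oplus N_r$ into non-zero summands, with associated pairwise orthogonal idempotents $e_1,\dots,e_r\in\End_{\og}(M_a)$, I will show that the $e_i$ survive to non-zero orthogonal idempotents in $\send(R(M_a))$, identify $R(M_a)\simeq M\oplus\Omega^{-1}M$ in $\slG$, and then invoke Krull-Schmidt to conclude $r\leq 2$.

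To verify that the $e_i$ survive, I would first show that no $N_i$ is weakly injective. Since $M_a$ is $\pi^a$-torsion, any weakly injective summand of $M_a$ is automatically $\O_a G$-projective, and its reduction modulo $\pi$ is a $kG$-projective summand of $\overline M$. By Thompson's theorem this would force a non-zero projective summand of the indecomposable non-projective lattice $M$, a contradiction. Hence each $e_i$ is non-zero in $\shom(M_a,M_a)$. Now Lemma~\ref{msplaced2}, applied with $N=M$ and $b=a$, shows that the map $\shom(M_a,M_a)\r\send(R(M_a))$ induced by $R$ is injective, since the matrix form exhibits $e_i$ itself as the top-left entry of $R(e_i)_a$. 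The images $R(e_i)$ are therefore pairwise orthogonal non-zero idempotents in $\send(R(M_a))$.

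Next I would identify $R(M_a)$ up to stable isomorphism. Multiplication by $\pi^a$ on $M$ gives $\pi^a M\cong M$, so the reduction sequence reads $0\r M\stackrel{\pi^a}{\r}M\r M_a\r 0$. By the definition of exponent, $\pi^a Id_M$ is weakly injective, so in $\sG$ this triangle has zero connecting map and its cone splits: $M_a\simeq M\oplus\Omega^{-1}M$ in $\sG$. Applying the functor $R$ (which is the identity on $\slG$) yields $R(M_a)\simeq M\oplus\Omega^{-1}M$ in $\slG$. Since $M$ is indecomposable, $\End_{\og}(M)$ is local; its non-zero quotient $\send(M)$ is therefore local, and likewise $\send(\Omega^{-1}M)$ is local via the $\Omega$-equivalence. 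Thus $M$ and $\Omega^{-1}M$ are stably indecomposable, and Krull-Schmidt in $\slG$ implies that $\send(R(M_a))$ admits no decomposition of the identity into more than two orthogonal non-zero idempotents. Combining with the previous step yields $r\leq 2$.

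The main obstacle is the identification $R(M_a)\simeq M\oplus\Omega^{-1}M$: this is the step that genuinely exploits the exponent hypothesis and bridges the torsion picture on the $M_a$ side to the lattice picture on the $M$ side. The preliminary task of ruling out weakly injective summands of $M_a$ is the other subtle point, since it requires identifying weak injectivity for $\pi^a$-torsion $\og$-modules with $\O_a G$-projectivity and then transferring the obstruction back to $M$ through Thompson's theorem.
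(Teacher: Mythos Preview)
Your overall strategy matches the paper's proof: rule out weakly injective summands of $M_a$ via Thompson, show the remaining summands are not annihilated by $R$, identify $R(M_a)\simeq M\oplus\Omega^{-1}M$, and bound the number of summands by two via Krull--Schmidt. Your use of Lemma~\ref{msplaced2} in place of the paper's appeal to Lemma~\ref{kerlem} (together with the fact, recorded at the start of Section~4, that a surjection from a non-projective indecomposable lattice onto a non-projective indecomposable $\O_bG$-lattice is stably non-zero) is a legitimate alternative: the proof of Lemma~\ref{msplaced2} explicitly establishes the injectivity of $\shom(M_a,M_a)\to\send(R(M_a))$ that you need.

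There is, however, a real gap in your identification of $R(M_a)$. The sequence $0\to M\xrightarrow{\pi^a}M\to M_a\to 0$ is \emph{not} $\O$-split (its cokernel is torsion), so it does not give a distinguished triangle in $\sG$, and you cannot conclude $M_a\simeq M\oplus\Omega^{-1}M$ there. In fact that stable isomorphism is false: for any lattice $L$ one has $\hom_{\og}(M_a,L)=0$ since $M_a$ is torsion and $L$ is $\O$-free, so $\shom(M_a,L)=0$; yet $\shom(M\oplus\Omega^{-1}M,M)\supseteq\send(M)\neq0$. What is true, and all you need, is $R(M_a)\simeq M\oplus\Omega^{-1}M$ in $\slG$. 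The paper gets this in the module category via the pullback diagram~\eqref{bjdiag} with $j=a$: since $\exp(M)=\pi^a$, the map $\pi^aId_M$ factors through the projective cover $P\to M$, so the top row of~\eqref{bjdiag} splits, giving $\oo(M_a)\cong M\oplus\oo M$ and hence $R(M_a)=\Omega^{-1}\oo(M_a)\simeq M\oplus\Omega^{-1}M$. Equivalently, you may pass to $\ldk$, where every short exact sequence does yield a triangle, deduce $F(M_a)\cong FL(M)\oplus FL(\Omega^{-1}M)$ there, and transport back via $(FL)^{-1}$. With this repair your argument is complete.
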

\begin{proof}
We may clearly assume $M$ is not projective.  Since $R(M_a)=M\oplus \oo^{-1}M$, it follows that the summands of $M_a$ not in the kernel of $R$ number at most two.  If $M_a$ has a weakly injective summand, then so does $\overline{M}$, and such a summand would lift to a projective summand of $M$, a contradiction.  Finally, since every summand of $M_a$ is a quotient of the projective-free lattice $M$, no summand of $M_a$ is in the kernel of $R$ by Lemma \ref{kerlem}.  
\end{proof}

\begin{prop} \label{aindec}
Let $M$ be an $\og$-lattice of exponent $\pi^a$.  Then $R(M_b)$ is indecomposable provided $M_b$ is indecomposable and $b< a$.
\end{prop}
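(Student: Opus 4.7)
The plan is to proceed by contradiction. Suppose $R(M_b)=X\oplus Y$ is a nontrivial decomposition; since $R(M_b)$ is projective-free by convention, both $X$ and $Y$ are nonzero projective-free $\og$-lattices, and the associated projection defines a nontrivial idempotent $\bar e\in\send(R(M_b))$.

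First I would identify $R(M_b)$ with $\oo(M'_b)$ via the pullback diagram \eqref{bjdiag}, taking $M'=\Omega^{-1}M$; then $\exp(M')=\exp(M)=\pi^a$ and $M'_b\cong\Omega_b^{-1}M_b$ over $\O_bG$. Since $b\le a$, Lemma \ref{j-lad} yields $\exp(R(M_b))=\pi^b$ together with the natural splitting $R(M_b)_b\cong M_b\oplus\Omega_b^{-1}M_b$ arising from the $\tor_1^{\O}$ term in the discussion following that lemma. By Lemma \ref{breakdownsucka}, reduction modulo $\pi^b$ embeds $\send(R(M_b))$ into $\shom_{\sG}(R(M_b)_b,R(M_b)_b)$, and combining this with Lemma \ref{msplaced2} the composite
\[\shom_{\sG}(M_b,M_b)\xrightarrow{R}\send(R(M_b))\xrightarrow{(-)_b}\shom_{\sG}(R(M_b)_b,R(M_b)_b)\]
has image the block matrices $\begin{pmatrix}\varphi & \alpha_\varphi\\ \beta_\varphi & \Omega_b^{-1}\varphi\end{pmatrix}$, so $R$ is at least injective on stable endomorphism rings.

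The key step is to upgrade this to a ring isomorphism $R\colon\shom_{\sG}(M_b,M_b)\xrightarrow{\sim}\send(R(M_b))$. Granting surjectivity, the indecomposability of $M_b$ (together with its non-projectivity over $\O_bG$, which holds because otherwise $R(M_b)\simeq 0$) makes $\shom_{\sG}(M_b,M_b)=\send_{\O_bG}(M_b)$ a local ring whose only idempotents are $0$ and $Id$. Thus $\bar e\in\{0,Id\}$, which forces $X$ (in the first case) or $Y$ (in the second) to be projective, contradicting projective-freeness.

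The main obstacle is exactly this surjectivity of $R$, and it is here that the strict inequality $b<a$ enters: because $\exp(M')=\pi^a>\pi^b$, the endomorphism $\pi^b Id_{M'}$ is not projective, so the extension class of $0\to\oo M'\to R(M_b)\to M'\to 0$ is nonzero in $\send(M')$, making the sequence sufficiently rigid to rule out stable endomorphisms of $R(M_b)$ outside the image of $R$. Concretely I expect to argue via Hensel lifting inside the $\pi$-adically complete ring $\End_{\og}(R(M_b))$: any idempotent reduces modulo $\pi^b$ to the matrix form of Lemma \ref{msplaced2}, whose top-left entry is a stable idempotent in the local ring $\send_{\O_bG}(M_b)$ and hence equals $0$ or $Id$; lifting back forces the original idempotent to be $0$ or $Id$. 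Sharpness of the bound is confirmed by the $b=a$ case used in the proof of Proposition \ref{pker}, where $\pi^a Id_{M'}$ becomes projective, the sequence splits, and $R(M_a)\simeq M\oplus\Omega^{-1}M$ decomposes genuinely.
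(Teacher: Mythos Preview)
Your proposal has a genuine gap at precisely the point you flag as the ``key step''. The hoped-for ring isomorphism $R\colon\shom_{\sG}(M_b,M_b)\to\send(R(M_b))$ fails for dimension reasons: by the adjunction and the $-\otimes\O_b$ adjunction one has
\[
\send(R(M_b))\cong\shom_{\sG}(R(M_b),M_b)\cong\shom_{\sG}(R(M_b)_b,M_b)\cong\shom(M_b,M_b)\oplus\shom(\Omega_b^{-1}M_b,M_b),
\]
and the second summand is typically nonzero (compare Theorem~\ref{uphere}, where $\send(RM)$ is the trivial extension of $\send(M)$, hence twice as large). So surjectivity is simply false, and the condition $b<a$ cannot repair it.

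Your fallback Hensel argument also does not go through as stated. Lemma~\ref{msplaced2} describes only the reductions of endomorphisms \emph{in the image of $R$}; it says nothing about an arbitrary idempotent $e\in\End_{\og}(R(M_b))$. To conclude that $e_b=\begin{pmatrix}\alpha&\beta\\ \gamma&\delta\end{pmatrix}$ has $\delta\simeq\Omega_b^{-1}\alpha$ you would need exactly the structural result the paper conjectures (but does not prove) in the Remark after Theorem~\ref{uphere}. Even granting that, $e_b^2=e_b$ yields $\alpha^2+\beta\gamma=\alpha$, not $\alpha^2=\alpha$; getting $\alpha$ to be a stable idempotent requires knowing $\beta\gamma$ is projective, which is again the trivial-extension statement. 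So the argument is circular.

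The paper's proof takes a different route that sidesteps any analysis of the full ring $\send(R(M_b))$. Assuming $R(M_b)$ decomposes, one first pins down the decomposition as $N\oplus\oo^{-1}N$ with $N$ indecomposable of exponent $\pi^b$ and $N_b\cong M_b$. One then works only with the \emph{socle} of $\send(N\oplus\oo^{-1}N)$: this is two-dimensional (by symmetry of the stable endomorphism algebra) with an explicit diagonal basis. The contradiction comes from the counit: the canonical map $M\to M_b$ adjoints to $(f',f'')\colon M\to N\oplus\oo^{-1}N$, and composing with the socle element forces one of $f',f''$ to be an isomorphism, which is impossible since $\exp(M)=\pi^a>\pi^b=\exp(N)$. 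This is where $b<a$ is actually used.
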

\begin{proof}
Suppose $R(M_b)$ is decomposable.  Since $R(M_b)_b=M_b \oplus \oo^{-1}(M)_b$, it follows that $R(M_b)=N\oplus \oo^{-1}N$, where $N$ is an indecomposable lattice of exponent $\pi^b$ such that $N_b\cong M_b$.  By the proof of Lemma \ref{msplaced2}, the composition $\send(N)\r \send{N_b}\r\send{R(N_b)}$ is a monomorphism of algebras.  In particular, if $0\neq\varphi\in\soc(\send(N))$, then $R(\varphi_b)\in\send(N\oplus\oo^{-1}N)$ is a non-zero element of $\soc(\send(N\oplus\oo^{-1}N))$.  Now, since $\dim_k(\he(\send(N,\oo^{-1}N))=2$ and $\send(N\oplus\oo^{-1}N)$ is a symmetric algebra by \cite[Theorem 2.2]{thev2} over the Artinian ring $\O_n$, it follows that $\dim_k(\soc(\send(N\oplus\oo^{-1}N))=2$.  Thus it is clear that the morphisms
\[\psi:=\begin{pmatrix} \varphi & 0 \\ 0 & 0 \end{pmatrix} \ \ \mbox{and} \ \ \psi':=\begin{pmatrix} 0 & 0 \\ 0 & \oo^{-1}\varphi\end{pmatrix}\]
form a basis for the socle of $\send(N\oplus\oo^{-1}N)$.  Indeed, they are obviously linearly independent, and since $\varphi$ generates the socle of $\send(N)$, $\psi$ and $\psi'$ are annihilated by all elements of $\rad(\send(N\oplus\oo^{-1}N))$.

Now, we have a commutative diagram
\[
\xymatrix{
                        &                            & M\ar[d]^{(f',f'')}          &           \\
R(\oo(N)_b) \ar[r] \ar[d] & R(Y_b) \ar[r]\ar[d] & N\oplus\oo^{-1}(N) \ar[d]^{\ep_{M_b}} \ar[r]^(0.6){R(\varphi_b)} & R(N_b) \ar[d] \\
\oo(N)_b \ar[r] & Y_b \ar[r] & M_b \ar[r]^{\varphi_b} & M_b,
}
\]
where the maps between then triangles are the respective counits, and $(f',f''): N\oplus\oo^{-1}(N)$ is the image of the reduction map $M\r M_b$ under the adjunction isomorphism $\shom(M,M_b)\cong\shom(M,R(M_b))$.  Then $\varphi_b\circ \ep_{M_b}\circ(f',f'')$ is not projective, and hence $R(\varphi_b)\circ (f',f'')$ is also non-projective.  However, by the description of the socle of $\send(N\oplus\oo^{-1}N)$ given above, this can only happen if one of $f',f''$ is an isomorphism, but this contradicts the assumption that $\exp(M)=\pi^a>\exp(N)=\pi^b$.  The proposition follows. 
\end{proof}

\begin{thm} \label{mdterm}
Let $M$ be a Kn\"{o}rr lattice of exponent $\pi^a$.  Then the almost split sequence terminating in $M$ is indecomposable if and only if $M/\pi^{a-1}M$ is indecomposable.  Further, if $a=1$, then the middle term of the almost split sequence terminating in $M$ is indecomposable projective. 
\end{thm}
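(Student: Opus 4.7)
The strategy is to split into the cases $a=1$ and $a\geq 2$, and in the latter case to exploit the identification (from the discussion preceding the theorem) of the middle term of the almost split sequence with $B := \oo(M_{a-1}) \cong R(\Omega_{a-1}(M_{a-1}))$.

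For $a=1$, a Kn\"orr lattice of exponent $\pi$ has $\send(M)\cong k$, so Proposition \ref{arl} identifies the $\og$-projective cover $P(M)\to M$ as right almost split; hence the middle term is $P(M)$. The lattice $M$ has simple head (being a proper summand of some $RS$, as in the proof of Proposition \ref{sky}), so $P(M)$ is indecomposable projective. Note that the condition ``$M/\pi^{a-1}M$ indecomposable'' is vacuous in this case.

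For $a\geq 2$, Lemma \ref{j-lad} gives that $B$ is a projective-free lattice of exponent $\pi^{a-1}$ with $B_{a-1}\cong M_{a-1}\oplus (\oo M)_{a-1}$. Tensoring the short exact sequence $0\to \oo M\to P\to M\to 0$ from an $\og$-projective cover of $M$ with $\O_{a-1}$, and using $\O$-flatness of $M$ and $P$ to kill the Tor terms, yields $(\oo M)_{a-1}\cong \Omega_{a-1}(M_{a-1})$ as $\O_{a-1}G$-modules. For the ``if'' direction, assume $M_{a-1}$ is indecomposable. Since $\O_{a-1}$ is a zero-dimensional Gorenstein ring, $\O_{a-1}G$ is self-injective and $\Omega_{a-1}$ is a stable self-equivalence, so $(\oo M)_{a-1}$ is indecomposable. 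Because $\Omega$ is a stable self-equivalence of $\sG$, $\send(\oo M)\cong \send(M)$, so $\oo M$ is a Kn\"orr lattice of exponent $\pi^a$. Proposition \ref{aindec} applied to $\oo M$ with $b=a-1<a$ then gives that $B\cong R((\oo M)_{a-1})$ is indecomposable.

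For the ``only if'' direction, suppose $M_{a-1}=A_1\oplus A_2$ with both $A_i$ non-zero. A projective $\O_{a-1}G$-summand of $M_{a-1}$ would reduce mod $\pi$ to a projective summand of $\overline{M}$, which lifts by Thompson to a projective summand of $M$; since $M$ is projective-free, so is $M_{a-1}$, and each $A_i$ in turn is projective-free. Similarly $(\oo M)_{a-1}$ is non-zero and projective-free. Thus $B_{a-1}\cong A_1\oplus A_2\oplus (\oo M)_{a-1}$ splits into at least three projective-free summands. If $B$ were indecomposable, Proposition \ref{pker} (applied to $B$, of exponent $\pi^{a-1}$) would force $B_{a-1}$ to have at most two summands, a contradiction; so $B$ decomposes. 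The main delicate points of this plan are the bookkeeping around projective summands in the various reductions (so that Proposition \ref{pker} applies as advertised) and the verification that the Kn\"orr property and exponent transfer from $M$ to $\oo M$ so that Proposition \ref{aindec} is available.
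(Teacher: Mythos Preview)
Your argument is correct and follows the paper's line: Proposition~\ref{sky} handles $a=1$, and Proposition~\ref{aindec} gives the ``if'' direction for $a\geq 2$. You apply Proposition~\ref{aindec} to $\oo M$ (after checking $(\oo M)_{a-1}\cong\Omega_{a-1}(M_{a-1})$ is indecomposable), whereas one could equally apply it directly to $M$: then $R(M_{a-1})$ is indecomposable, and since $B=\oo(M_{a-1})\simeq\Omega R(M_{a-1})$ with $\Omega$ preserving indecomposability of lattices, $B$ is indecomposable. Either route is fine.

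For the ``only if'' direction your use of Proposition~\ref{pker} is valid but heavier than necessary. Since the middle term is literally $B=\oo(M_{a-1})$ (the kernel of the $\og$-projective cover $P(M)\to M_{a-1}$, and $P(M)=P(M_{a-1})$ as $M$ is projective-free), additivity of the Heller operator gives directly that a decomposition $M_{a-1}=A_1\oplus A_2$ yields $B=\oo(A_1)\oplus\oo(A_2)$; each $A_i$ is non-zero torsion, hence not $\og$-projective, so both $\oo(A_i)$ are non-zero lattices and $B$ decomposes. The paper's one-line proof is silent on this direction, presumably because this additivity observation is routine; your detour through Proposition~\ref{pker} recovers the same conclusion at the cost of the projective-freeness bookkeeping you flag.
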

\begin{proof}
The first part follows from Propositions \ref{aindec}.  The case when $a=1$ is implied by Proposition \ref{sky}.
\end{proof}

\section{Heller lattices in ramified extensions}

We turn to Heller lattices of $kG$-modules and their indecomposability. It has been observed by many authors (for instance, \cite{jones},\cite{kaw1}) that one may find indecomposable $kG$-modules $N$ such that $\oo N$ is decomposable; for an explicit example, one may show that $J(\og)$ is decomposable when $(|G|)=J(\O)$ (see \cite[Lemma 1.1]{kaw3}), and hence $\oo k$ is decomposable for when $G$ is a cyclic group of order $p$.  In fact, in this case, Heller and Reiner \cite{HR} have shown that there are only two indecomposable non-projective $\og$ lattices, the trivial lattice $\O$ and the augmentation ideal $\mathfrak{A}$.  It follows that if $M$ is any indecomposable $kG$-module, $\oo M= \O \oplus \mathfrak{A}$.  


 Further, Example \ref{gtytg} suggests it is not even obvious the Heller lattice of a projective-free $kG$-module need necessarily be projective-free. 

 Proposition \ref{aindec} shows that $\oo (M)$ is indecomposable whenever $M$ is an indecomposable liftable $kG$-module and $(|G|)\neq J(\O)$, independent of how $p$ (the characteristic of $k$) ramifies in $\O$.  We are able to do much better when we allow $p$ to ramify in $\O$.

\subsection{Ramified $\O$}

From now on, we consider $\O$ as a (finite) purely ramified extension $(\O',\pi')\subset (\O,\pi)$ of complete discrete valuation rings of ramification degree $e$. Recall that this means that $\O$ is a free $\O'$-module of finite rank such that $(\pi')=(\pi^e)$, where $\pi'$ and $\pi$ generate the maximal ideals of $\O'$ and $\O$ respectively.  Further, the residue fields $k=\O/\pi\O$ and $k'=\O'/\pi'\O'$ are isomorphic (and we identify them).  
Arbitrary finite extension of valuation rings may written as an unramified extension (giving an extension of residue fields) followed by a purely ramified extension.  We focus only on purely ramified extensions for ease of exposition; none of the results obtained are materially affected by this convention (provided all $kG$-modules considered are definable over the appropriate subfields of $k$).  Of course, the extension above is the result of an algebraic extension of complete discrete valuation fields $K/K'$, and $\O$ and $\O'$ are the appropriate rings of integers.

Feit has shown that when considering ramified extensions of complete local rings, one gains a degree of control over the $\pi$-modular reductions of lattices. To be precise, he shows

\begin{lem}\label{feity}
Let $A'$ be an $\O'$-algebra, and $M$ a finitely generated $A$-lattice.  Let $0=V_n\subseteq V_{n-1} \subseteq \cdots \subseteq V_1=\overline{M}$ be a filtration of the $\overline{A'}$-module $\overline{M}$, with $W_i=V_i/V_{i+1}$.  Let $\O$ be an extension of $\O'$ with ramification index $e\geq n$, and set $A=\O\otimes_{\O'}A$.  Then there is an $A$-module $N$ such that $K\otimes N\cong K\otimes M$ with $\overline{N}\cong \bigoplus_{i=1}^n W_i$.
\end{lem}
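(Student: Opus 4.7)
The plan is to exploit the extra $\pi$-adic room provided by the extension $\O'\subseteq \O$ to construct $N$ inside $K\otimes_{\O'}M$ by placing the filtration layers $W_i$ at distinct $\pi$-adic depths.  First I would lift the filtration to $M$: let $M_i$ denote the preimage of $V_i$ under the reduction $M\r\overline{M}$, so that $\pi' M=M_n\subseteq\cdots\subseteq M_1=M$ with $M_i/M_{i+1}\cong W_i$.  For each $i$ choose a lift $\{e_{i,\alpha}\}_\alpha\subseteq M_i$ of a $k$-basis of $W_i$, and let $F_i$ denote its $\O'$-span.  Nakayama then gives the $\O'$-decomposition $M=\bigoplus_i F_i$, refined by $M_i=\bigoplus_{j\geq i}F_j\oplus\bigoplus_{j<i}\pi' F_j$; the $F_i$ themselves need not be $A'$-stable, but this will not matter.

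I would then extend scalars, setting $\tilde{M}=M\otimes_{\O'}\O$ and $\tilde{F}_i=F_i\otimes_{\O'}\O$, and define
\[N\ :=\ \bigoplus_{i=1}^n \pi^{-(i-1)}\tilde{F}_i\ \subseteq\ K\otimes_{\O'}M.\]
By construction $N$ is an $\O$-lattice with $K\otimes_\O N\cong K\otimes_{\O'}M$, so the content of the lemma reduces to showing that $N$ is $A$-stable and computing $\overline{N}$.  For stability, given $a\in A'$ and $x\in \tilde{F}_i$ the element $a\cdot x$ lies in $\tilde{M}_i=\bigoplus_{j\geq i}\tilde{F}_j\oplus\bigoplus_{j<i}\pi^e\tilde{F}_j$, so each of its components, after being scaled by $\pi^{-(i-1)}$, lands back in the corresponding slot $\pi^{-(j-1)}\tilde{F}_j$ of $N$ precisely because the required inequalities $e-i+1\geq -(j-1)$ all hold under $e\geq n$.

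The main obstacle is this ramification bookkeeping, which is then re-used more sharply to compute $\overline{N}$.  The direct-sum description of $N$ immediately yields $\overline{N}\cong \bigoplus_i \tilde{F}_i/\pi\tilde{F}_i\cong \bigoplus_i W_i$ as $k$-vector spaces, so only the $\overline{A'}$-structure remains.  Revisiting the expansion of $a\cdot x$, every summand with $j\neq i$ picks up at least one extra factor of $\pi$ when placed in its natural slot in $N$ (this is where $e\geq n$ is genuinely needed, as the worst case $j=1$, $i=n$ requires the exponent $e-i+1\geq 2-j$ to hold) and hence vanishes modulo $\pi N$.  What remains is the $j=i$ term, under which $\bar{a}\cdot \overline{\pi^{-(i-1)}e_{i,\alpha}}$ identifies with $\bar{a}\cdot \bar{e}_{i,\alpha}\in W_i$.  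Thus the $\overline{A'}$-action on $\overline{N}$ is diagonal on $\bigoplus_i W_i$, as claimed.
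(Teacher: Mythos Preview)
Your argument is correct and is essentially the classical proof: lift the filtration to sub-$\O'$-lattices, choose compatible bases, and rescale the layers by successive powers of $\pi$ inside $K\otimes M$; the ramification hypothesis $e\geq n$ is exactly what makes the off-diagonal terms of the $A'$-action fall into $\pi N$.  The paper does not give its own proof at all---it simply cites \cite[Lemma~18.2]{Feit}---and what you have written is precisely Feit's construction, so there is no methodological difference to report.  One small remark: the statement as printed has an indexing slip ($V_n=0$ forces only $n-1$ nonzero quotients, so your ``worst case $i=n$'' should really be $i=n-1$, and correspondingly $e\geq n-1$ already suffices), but this is a typo in the lemma rather than a flaw in your reasoning.
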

\begin{proof}
This is \cite[Lemma 18.2]{Feit}.
\end{proof}

The key observation following this lemma is that given a $kG$-module $M$ we may construct an $\og$-module $N$ such that $\overline{N}\cong M\oplus \ok^{-1}M$: simply lift an injective hull of $M$ over $\O'$ to get an $\O'G$-lattice $P'$, and apply the lemma to the filtration $0\subset M \subset \overline{P'}$. Hence we observe:
\begin{lem}\label{cosurj}
Every projective-free $kG$-module is the quotient of a projective-free lattice.  In particular, the counit maps $RM\r M$ are surjective.
\end{lem}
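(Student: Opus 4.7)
The plan is to prove the two assertions in sequence: first, construct a projective-free $\og$-lattice $N$ surjecting onto a given projective-free $kG$-module $M$ using Lemma \ref{feity}; second, deduce surjectivity of the counit by transporting this surjection through the adjunction of Theorem \ref{kag} and reducing modulo $\pi$.

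For the first assertion, let $M$ be a projective-free $kG$-module and choose an injective hull $M \hookrightarrow I(M)$ in $\mod\md kG$. Since $kG$ is self-injective, $I(M)$ is $kG$-projective and so, by idempotent lifting, admits a lift to a projective $\O'G$-lattice $P'$ with $\overline{P'}\cong I(M)$. I would then apply Lemma \ref{feity} to $P'$ with the length-two filtration $0 \subset M \subset I(M) = \overline{P'}$, whose quotients are $M$ and $I(M)/M \cong \ok^{-1}M$. Since the standing assumption of this section gives ramification degree $e \geq 2$, the hypothesis of Feit's lemma is satisfied, yielding an $\og$-lattice $N$ with $\overline{N}\cong M \oplus \ok^{-1}M$. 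Both summands are projective-free $kG$-modules, so by Thompson's theorem $N$ is itself projective-free, and composing the reduction $N\twoheadrightarrow\overline{N}$ with the projection onto the $M$-summand gives the required surjection.

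For the second assertion, assume $M$ is projective-free (this is the case of interest, as $R$ sends projective summands to zero). Let $f\colon N\twoheadrightarrow M$ be the surjection constructed above, and let $\tilde f\colon N\to RM$ in $\slG$ be its image under the adjunction of Theorem \ref{kag}, so that $\ep_M\circ\tilde f\simeq f$ in $\sG$. In the module category, $f-\ep_M\tilde f$ factors through a weakly injective $\og$-module. Because $\pi M = 0$, the map $\ep_M$ factors through $\overline{RM}$, and the reduction mod $\pi$ of any weakly injective $\og$-module (being a summand of some $(L/\pi L)\otimes_k kG$) is a projective $kG$-module; thus reducing mod $\pi$ yields $\overline{\ep_M}\circ\overline{\tilde f}=\overline f$ in $\stab\md kG$. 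But $\overline f\colon \overline N\cong M \oplus \ok^{-1}M \to M$ is the projection onto the $M$-summand, which admits an obvious section, so $\overline{\ep_M}$ is split surjective in $\stab\md kG$. Consequently $\coker(\overline{\ep_M})$ is projective as a $kG$-module; since $kG$ is self-injective, any such quotient of $M$ splits off as a direct summand of $M$, which is projective-free, forcing $\coker(\overline{\ep_M})=0$. Hence $\overline{\ep_M}$ is surjective, and so is $\ep_M$, since $\pi M = 0$ gives $\im(\ep_M)=\im(\overline{\ep_M})$.

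The main obstacle is the compatibility of the adjunction with reduction mod $\pi$---specifically, that the stable relation $f \simeq \ep_M \tilde f$ in $\sG$ descends cleanly to $\overline f = \overline{\ep_M}\overline{\tilde f}$ in $\stab\md kG$. This boils down to the observation above that reductions of weakly injective $\og$-modules are projective $kG$-modules, but still deserves careful articulation in a full proof.
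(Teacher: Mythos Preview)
Your argument for the first assertion matches the paper's: apply Feit's lemma to a two-step filtration of the reduction of a projective lattice. You use the injective hull where the paper's proof uses the projective cover; either works, and indeed the remark following Lemma~\ref{feity} is precisely your version. Your setup for the second assertion also agrees with the paper's --- factor $f$ through the counit via the adjunction, reduce modulo $\pi$, and exploit projective-freeness --- and your handling of the passage from $\sG$ to $\stab\md kG$ (weakly injective $\og$-modules have projective reduction) is correct.

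The gap is the step ``$\overline{\ep_M}$ is split surjective in $\stab\md kG$, consequently $\coker(\overline{\ep_M})$ is projective''. This implication fails without the projective-freeness hypothesis on the target: for $G=C_3$ over $\mathbb{F}_3$, the map $g\colon V_2\oplus V_2 \to kG\oplus V_2$ sending $(a,b)$ to $(\iota(a),b)$, with $\iota\colon V_2\cong\rad(kG)\hookrightarrow kG$, has the projection onto the second $V_2$ as a stable section, yet $\coker(g)\cong k$ is not projective. So projective-freeness of $M$ must be used to \emph{obtain} the vanishing of the cokernel, not merely to rule out a projective summand afterwards. The repair --- which is the paper's argument, run there on $\overline{B}$ rather than on $M$ --- is direct: your stable section forces the quotient map $\pi\colon M\to C:=\coker(\overline{\ep_M})$ to be stably zero, hence to factor through a projective; if $C\neq 0$, pick a simple quotient $S$ of $C$, so that the resulting surjection $M\to S$ factors through the indecomposable projective $P(S)$; since every proper submodule of $P(S)$ lies in its radical, the map $M\to P(S)$ is surjective and therefore split (as $P(S)$ is injective), contradicting projective-freeness of $M$.
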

\begin{proof}
For a projective-free $kG$-module $M$, the first part follows from Feit's Lemma applied to the $\og$-projective cover $P(M)$ of $M$ with $\Omega_k(M)\subset \overline{P(M)}$. 
For the second assertion, let $f:B\r M$ be some surjective map, with $B$ projective-free.  Consider the induced diagram:
\[
\xymatrix{
B \ar[rr]^f \ar[dd] \ar[dr]^{\pi_B} &     &  M \\
                            & \overline{B} \ar[dr]^\beta \ar[ur]^\varphi &    \\
RM\oplus P \ar[rr] &  & \overline{RM} \oplus \overline{P}, \ar[uu]^\alpha
}
\] 
where $P$ is projective and the composition $RM\oplus P\r \overline{RM}\oplus \overline{P} \r M$ is stably the counit.  Suppose for contradiction the counit is not surjective.  Then we can take $P$ to be non-zero, and assume everything commutes in $\OG$, so $\varphi=\alpha\beta$, in particular.  Then, there is a simple module $S$ in the head of $M$ such that the composition $\theta:P\stackrel{\alpha|_P}{\r} M \r S$ (the second map being the canonical surjection) is non-zero.  If $P(S)$ is the indecomposable summand of $P$ not in the kernel of $\theta$, then $\beta$ induces the non-zero composition $\overline{B}\stackrel{\beta'}{\r} P(S) \stackrel{\theta|_{P(S)}}{\r} S$.  However, this is only non-zero if $\beta'$ is surjective and thus splits.  Since $B$ is projective-free so is $\overline{B}$, giving us a contradiction.    
\end{proof}

For a $kG$-module $M$, the syzygy $\ok(M)$ can always be chosen to be projective-free.  This property does not readily lift when we consider $M$ as an $\og$-module.  Indeed, if we take $M$ to be an arbitrary $\og$-module, then in can be that $\oo(M)$ \emph{cannot} be made to projective-free; simply take $M$ to be any non-zero object in the kernel of $R$.  However, the previous lemma implies that, provided $M$ is a $kG$-module, that this property does carry over.  

\begin{prop} \label{hirsuite}
Let $P(M)$ be an $\og$-projective cover of an indecomposable non-projective $kG$-module $M$.  Then the kernel of any surjection $P(M)\r M$ is projective-free.
\end{prop}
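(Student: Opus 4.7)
The plan is to reduce the claim to a $kG$-module computation, identify $\bar K := K/\pi K$ explicitly (where $K = \ker \phi$), and then apply Feit's and Jones's lemmas to obtain a clean decomposition. First, $K$ is an $\O$-submodule of the $\O$-free $P(M)$, hence $\O$-free and thus an $\og$-lattice. Since $\O$ is complete, $\End_\og(K)$ is $\pi$-adically complete, so idempotents lift from $\End_{kG}(\bar K)$ to $\End_\og(K)$. A nonzero projective $\og$-summand of $K$ reduces modulo $\pi$ to a nonzero projective $kG$-summand of $\bar K$, and conversely lifting idempotents produces projective $\og$-summands from projective $kG$-summands. It therefore suffices to show that $\bar K$ is $kG$-projective-free.

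To compute $\bar K$, I would tensor $0 \to K \to P(M) \to M \to 0$ with $k$ over $\O$. Since $P(M)$ is $\O$-free and $M$ is $\pi$-torsion, $\tor_1^\O(M,k) \cong M$, so the $\tor$ long exact sequence collapses to
\[
0 \to M \to \bar K \to \bar{P(M)} \to M \to 0,
\]
which splices through $\ok M = \ker(\bar{P(M)} \to M)$ into a short exact sequence $0 \to M \to \bar K \to \ok M \to 0$ of $kG$-modules.

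The main step is to show this extension splits, so that $\bar K \cong M \oplus \ok M$. For this I would produce an $\og$-lattice lift of $M$ and then invoke Jones's Lemma \ref{j-lad}. Starting with an $\O'G$-projective cover $P'(M)$, the filtration $0 \subset \ok M \subset \bar{P'(M)}$ has two nontrivial layers $M$ and $\ok M$; Feit's Lemma \ref{feity}, applied using the ramification of $\O$ over $\O'$, produces an $\og$-lattice $\tilde L$ with $\overline{\tilde L} \cong M \oplus \ok M$. Lifting the idempotent separating these two summands from $\End_{kG}(\overline{\tilde L})$ to $\End_\og(\tilde L)$ via $\pi$-adic completeness of $\End_\og(\tilde L)$ yields a decomposition $\tilde L = L \oplus L'$ of $\og$-lattices with $\bar L \cong M$ and $\overline{L'} \cong \ok M$. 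Since $\bar L = M$ is non-projective, so is $L$, and Lemma \ref{j-lad} applied to $L$ with $j = b = 1$ gives
\[
\bar K = \overline{\oo M} = \overline{\oo(L_1)} \cong L_1 \oplus \overline{\oo L} \cong M \oplus \ok M,
\]
where the final isomorphism follows from reducing the defining sequence $0 \to \oo L \to P(L) \to L \to 0$ modulo $\pi$ to identify $\overline{\oo L}$ with $\ok \bar L = \ok M$. Since $M$ and $\ok M$ are both indecomposable non-projective $kG$-modules, the direct sum is $kG$-projective-free, and by the first step so is $K$.

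The hard part will be the construction of the lattice lift $L$ of $M$ in the third paragraph: this is the technical heart of the argument, relies essentially on Feit's lemma for the two-layer filtration of $\bar{P'(M)}$, and is precisely where the ramification of $\O$ plays its decisive role. Once $L$ exists, the remainder is a direct application of Jones's splitting lemma combined with the $\tor$ computation of $\bar K$.
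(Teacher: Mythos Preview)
Your argument is correct and takes a genuinely different route from the paper. Both approaches rely on Feit's Lemma~\ref{feity} to exploit the ramification hypothesis, but they deploy it differently. The paper first uses Feit to establish Lemma~\ref{cosurj} (the counit $\ep_N:RN\to N$ is surjective for every projective-free $kG$-module $N$), and then runs a pullback argument: writing $\oo(M)=R(\ok M)\oplus\tilde{P'}$ with $\tilde{P'}$ the maximal projective summand, it identifies the map $\overline{R(\ok M)}\to\ok M$ with $\ep_{\ok M}\otimes k$, whose surjectivity (from Lemma~\ref{cosurj}) forces the complementary map $M\to\overline{\tilde{P'}}$ in the pullback square to be surjective as well, so $\tilde{P'}=0$. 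You instead use Feit directly to manufacture an $\og$-lattice $L$ with $\bar L\cong M$, and then invoke Jones's Lemma~\ref{j-lad} (with $j=b=1$) to compute $\overline{\oo M}\cong M\oplus\ok M$ outright; since both summands are indecomposable non-projective, the conclusion follows via Thompson's lifting criterion. Your route is more elementary---it avoids the counit/adjunction language entirely---and in fact proves more: the explicit splitting of $\overline{\oo M}$ is essentially the content of the later Proposition~\ref{rk} (applied with $\ok M$ in place of $M$), so you are collapsing two of the paper's results into one. The paper's route, on the other hand, has the virtue of isolating the surjectivity of the counit as a standalone lemma, which fits the adjunction theme developed in Section~\ref{adj} and is reused elsewhere.
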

\begin{proof}
By abuse of notation, let $\oo(M)$ be the kernel of some surjective map $P(M)\r  M$. 
Consider the following commutative diagram with exact rows:
\begin{equation} \label{square}
\xymatrix{
   0 \ar[r]       & 0 \ar[r]  \ar[d]& \oo(M) \ar[r] \ar@{=}[d]       &  Q\ar[r]  \ar[d]         & RM \ar[r]\ar[d]^{\ep_M} & 0 \\
   0 \ar[r]       & 0 \ar[r] \ar[d] & \oo(M) \ar[r]  \ar[d]          &  P(M) \ar[r]  \ar[d]         & M \ar[r]\ar@{=}[d]  & 0 \\
 0 \ar[r]  & M \ar[r] \ar[d] & \overline{\oo(M)} \ar[r]\ar@{->>}[d] &  \overline{P(M)}\ar[r] \ar@{=}[d] & M \ar[r] \ar@{=}[d]        & 0 \\
 0 \ar[r]         &    0\ar[r]      &  \ok(M) \ar[r]        &  \overline{P(M)}\ar[r] & M \ar[r]         &  0.          
          }
\end{equation}
Here, the top two rows give the defining diagram for the counit $\ep_M$, and the third row is obtained from the second by tensoring with $k$ (note that $\tor_1^{\O}(M,k)\cong M$ and $P(M)$ is flat as an $\O$-module).  The induced map between the top and bottom rows, namely 
\[
\xymatrix{
   0 \ar[r]  & \Omega(M) \ar[r] \ar[d]       &  Q\ar[r]  \ar[d]         & RM \ar[r]\ar[d]^{\ep_M} & 0 \\
  0\ar[r]      &  \ok(M) \ar[r]        &  \overline{P(M)}\ar[r] & M \ar[r]         &  0.          
          }
\]
shows that the map $\oo(M)\r \ok(M)$ is stably the counit $\ep_{\ok(M)}$ (the projective-free part of $\oo(M)$ is $R(\ok(M))$).  Write $\oo(M)=R(\ok(M))\oplus \tilde{P'}$, so that $\tilde{P'}$ is the largest projective summand of $\oo(M)$.  Then $\oo(M)\otimes k=M'\oplus P'$, where $P'$ and $M'$ lift to $\tilde{P'}$ and $R(\ok(M))$ respectively, by \cite[Lemma 1]{thomp}.  Now, the exact sequence
\[   
\xymatrix{
0\ar[r] & M \ar[r]  & M'\oplus P' \ar[r] & \ok(M) \ar[r] & 0
}
\] 
implied by the third and fourth rows of the diagram \eqref{square} is realised by the pullback diagram
\[
\xymatrix{
M\ar[r]^f \ar[d]^{f'} & M' \ar[d]^{g'} \\
P' \ar[r]^(0.4)g & \ok(M).
}
\]
It follows that $g'$ is simply $\ep_{\ok(M)}\otimes k$, and is thus surjective.  Hence $f'$ is also surjective, implying that $P'=0$, completing the proof. 
\end{proof}

\begin{prop} \label{rk}
Let $M$ be an indecomposable non-projective $kG$ module.  Then $RM\otimes k\cong M\oplus \ok^{-1}(M)$. 
\end{prop}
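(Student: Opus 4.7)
The plan is to construct a module-level $kG$-splitting of the reduced counit $\overline{\epsilon_M}: \overline{RM} \to M$ and then identify its kernel, using Feit's Lemma to produce the splitting and Thompson's theorem to rule out extraneous projective summands.

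First I would study the kernel $K$ of a surjective $\og$-linear lift of $\epsilon_M: RM \to M$, which exists by Lemma \ref{cosurj}. Because $L$ is fully faithful, $R\epsilon_M$ is an isomorphism, so applying $R$ to the triangle $K \to RM \to M \to$ forces $RK \simeq 0$. Since $K$ is a lattice and $\O$ has global dimension one, $K$ is in fact a projective $\og$-module. Tensoring $0 \to K \to RM \to M \to 0$ with $k$ and using $\tor_1^{\O}(M, k) \cong M$ then produces the four-term exact sequence
\[0 \to M \to \overline{K} \to \overline{RM} \to M \to 0,\]
in which $\overline{K}$ is a projective, and hence injective, $kG$-module. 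Embedding $M \hookrightarrow \overline{K}$ via its $kG$-injective hull $I(M)$ gives $\overline{K} \cong I(M) \oplus Q$ for some projective $kG$-module $Q$, and so $\ker(\overline{\epsilon_M}) = \overline{K}/M \cong \ok^{-1}M \oplus Q$.

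Second, I would use ramification to construct a splitting of $\overline{\epsilon_M}$. Lifting $I(M)$ to an $\og$-projective lattice $\hat{I}$ and applying Feit's Lemma (Lemma \ref{feity}) to the filtration $0 \subset M \subset \overline{\hat{I}}$---legitimate because the ramification index satisfies $e \geq 2$---produces an $\og$-lattice $N$ with $\overline{N} \cong M \oplus \ok^{-1}M$. The canonical surjection $N \twoheadrightarrow \overline{N} \twoheadrightarrow M$ corresponds under the adjunction $(L,R)$ to a morphism $\varphi: N \to RM$ in $\slG$. Choosing an $\og$-linear representative and reducing modulo $\pi$, the endomorphism $u := \overline{\epsilon_M} \circ \overline{\varphi}|_M$ of $M$ differs from $\mathrm{id}_M$ by a map factoring through a projective $kG$-module. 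Since $M$ is indecomposable and non-projective, $\End_{kG}(M)$ is local, and any factor-through-projective endomorphism is a non-unit and hence lies in the Jacobson radical. Therefore $u$ is an automorphism of $M$, and $\overline{\varphi}|_M \circ u^{-1}: M \to \overline{RM}$ is a genuine section of $\overline{\epsilon_M}$, giving $\overline{RM} \cong M \oplus C$ with $C = \ker(\overline{\epsilon_M})$.

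Finally, Thompson's theorem asserts that since $RM$ is projective-free as an $\og$-module, $\overline{RM}$ is projective-free as a $kG$-module, and hence so is its summand $C$. Combined with $C \cong \ok^{-1}M \oplus Q$ from the first step, this forces $Q = 0$ and $C \cong \ok^{-1}M$, establishing $\overline{RM} \cong M \oplus \ok^{-1}M$. The main obstacle will be the transition from the stable-level splitting guaranteed by the adjunction to a genuine module-level splitting: the adjunction determines $\varphi$ only up to maps factoring through projectives, so naive reduction modulo $\pi$ need not yield a section on the nose. The critical ingredient is the locality of $\End_{kG}(M)$, which ensures that $u - \mathrm{id}_M$ lies in the Jacobson radical so that $u$ is invertible.
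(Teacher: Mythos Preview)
Your proposal is correct and follows essentially the same route as the paper: construct $N$ via Feit's lemma, use the adjunction to factor the split epimorphism $\overline{N}\to M$ through $\overline{\ep_M}$, and deduce that $\overline{\ep_M}$ splits. Your argument is in fact more complete than the paper's, since you explicitly justify the two points the paper leaves implicit---that the stable splitting upgrades to a module-level one (via locality of $\End_{kG}(M)$), and that $\ker(\overline{\ep_M})\cong\ok^{-1}M$ (via your analysis of the projective lattice $K$ together with Thompson's theorem).
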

\begin{proof}
By Lemma \ref{feity} there is a lattice $N$ with $\overline{N}\cong M\oplus \ok^{-1}(M)$.  Let $f:\overline{N}\r M$ be a split epimorphism.  Then in $\sG$ we have a diagram
\[
\xymatrix{
N\ar[rr] \ar[dd] \ar[dr]^{-\otimes k} &                                          &  M \\
                                      & \overline{N} \ar[ur]^f \ar[dr]^{\varphi} &  \\
RM \ar[rr]^{-\otimes k}               &                     & \overline{RM}\ar[uu]_{\ep_M\otimes k}
}
\]
where everything commutes.  Since $f$ is the composition $\overline{N}\r \overline{RM}\r M$, it follows that $\ep_M\otimes k$ is split.  Since $\overline{RM}$ and $M$ are projective-free, we deduce that $\ep_M\otimes k$ is split in $\OG$.  Since the kernel of $\ep_M\otimes k$ is $\ok^{-1}(M)$, the first part of proposition follows.
\end{proof}

\begin{rmk}
This result does not hold when $\O$ is unramified.  To see this, let $p\geq 5$ be a prime, $\O=\Z_p$ the $p$-adic integers with residue field $k=\mathbb{F}_p$ and $G=C_p$ a cyclic group of order $p$.  Let $M$ be the indecomposable $kG$-module of length two (or any length other than one or $p-1$).  We have seen in this case that there are only two non-projective indecomposable $\og$-lattices, and it is not hard to see that $RM$ is isomorphic to their sum.  It follows that $\overline{RM}\cong k\oplus \ok^{-1} k$.
\end{rmk}

With respect to the decomposition $\overline{RM}=M\oplus \ok^{-1}M$, we write morphisms in $\send(\overline{RM})$ as matrices $\begin{pmatrix} \alpha & \beta \\ \gamma & \delta \end{pmatrix}$, where $\alpha\in \send(M)$, $\beta\in \shom(\ok^{-1}M,M)$, $\gamma \in \shom(M,\ok^{-1}M)$, and $\delta \in \send(\ok^{-1}M,\ok^{-1}M)$.  We also note that, our construction of $RM$ using Feit's lemma as described after Lemma \ref{feity} coincides with Kawata's construction of the Heller lattice of $\ok^{-1}M$ in \cite{kaw2}.  This justifies our use of \cite[Lemma 4.3]{kaw2} below.

\subsection{Endomorphism rings}

In this section, we determine the stable endomorphism rings of the modules $RM$ for $M$ a $kG$-module.  We deduce as corollaries the main results of \cite{kaw2} under the slightly more general hypothesis that $\O$ has ramification degree 2 over $\O'$.  First, an analogue of Lemma \ref{msplaced2}.

\begin{lem} \label{misplaced2}
Let $M,N$ be projective-free $kG$-modules.  Then the map on hom spaces induced by $R(\md)\otimes k$ is given by
\begin{eqnarray*}
\shom(M,N) & \r & \shom(\overline{RM},\overline{RN}) \\
\varphi & \mapsto & \begin{pmatrix} \varphi &\alpha_\varphi \\ \beta_\varphi & \ok^{-1} \varphi \end{pmatrix},
\end{eqnarray*}
where the maps $\beta_\varphi$ and $\alpha_\varphi$ only depend on the class of $\varphi$ in $\sG$.
\end{lem}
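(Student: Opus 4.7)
The plan is to follow the template of Lemma \ref{msplaced2}, now with the ramified splitting $\overline{RM}\cong M\oplus\ok^{-1}M$ of Proposition \ref{rk} in place of the $\tor$-splitting $(\oo M)_b\cong M_b\oplus\Omega_b M_b$. First I would fix direct-sum decompositions $\overline{RM}\cong M\oplus\ok^{-1}M$ and $\overline{RN}\cong N\oplus\ok^{-1}N$ as in Proposition \ref{rk}, with the first summand realised as the image of a chosen splitting of the reduced counit. As noted in the remarks preceding the lemma, in the ramified setting $RM$ may be identified with the Heller lattice $\oo(\ok^{-1}M)$, so that $R\varphi$ is realised concretely as the restriction to kernels of any chain lift of $\ok^{-1}\varphi$ across the projective covers $P(\ok^{-1}M)\r\ok^{-1}M$ and $P(\ok^{-1}N)\r\ok^{-1}N$.

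Reducing this realisation modulo $\pi$ and invoking the natural isomorphism $\tor_1^{\O}(X,k)\cong X$ for each $kG$-module $X$ produces a commutative diagram of $kG$-module short exact sequences
\[
\xymatrix{
0\ar[r] & \ok^{-1}M\ar[r]\ar[d]^{\ok^{-1}\varphi} & \overline{RM}\ar[r]\ar[d]^{\overline{R\varphi}} & \ok(\ok^{-1}M)\ar[r]\ar[d] & 0 \\
0\ar[r] & \ok^{-1}N\ar[r] & \overline{RN}\ar[r] & \ok(\ok^{-1}N)\ar[r] & 0,
}
\]
in which $\ok(\ok^{-1}M)\simeq M$ and $\ok(\ok^{-1}N)\simeq N$ in $\stab\md kG$. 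Reading off the matrix of $\overline{R\varphi}$ with respect to the fixed splittings shows that its restriction to the sub $\ok^{-1}M$ is $\ok^{-1}\varphi$ (pinning down the $(2,2)$-entry) and its induced map on the quotient is $\varphi$ (pinning down the $(1,1)$-entry in $\shom$). A secondary verification of the $(1,1)$-entry comes from the naturality equation $\varphi\circ\ep_M\simeq\ep_N\circ R\varphi$ of the counit from Theorem \ref{kag}: since $M$ and $N$ are killed by $\pi$, any $\og$-weakly injective map between $kG$-modules factors through its own reduction (a $kG$-projective), so this equivalence descends to $\shom_{kG}(\overline{RM},N)$ and, combined with the identification of $\overline{\ep_M}$ as the projection onto the first summand, forces the $(1,1)$-entry to equal $\varphi$.

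The remaining entries $\alpha_\varphi$ and $\beta_\varphi$ are simply the components of $\overline{R\varphi}$ not pinned down above. They depend only on the stable class of $\varphi$: any two chain-level lifts of $\ok^{-1}\varphi$ across a projective cover differ by a map factoring through an $\og$-projective module, whose reduction factors through a $kG$-projective and is therefore null in $\shom$. I expect the most delicate step to be the descent of the counit naturality from $\sG$ down to $\shom_{kG}(\overline{RM},N)$, which hinges on the $kG$-module observation above; once that bookkeeping is carried out, the rest is a diagram chase entirely parallel to Lemma \ref{msplaced2}.
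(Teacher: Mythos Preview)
Your proposal is correct and follows essentially the same route as the paper's proof, which simply says ``take $b=1$ in the proof of Lemma~\ref{msplaced2}, given the decompositions from Proposition~\ref{rk}.'' The one cosmetic difference is that you pre-shift---realising $R\varphi$ as $\oo(\ok^{-1}\varphi)$ and reading the diagonal $(\varphi,\ok^{-1}\varphi)$ off the resulting $\tor$ sequence directly---whereas the literal $b=1$ specialisation computes $\overline{\oo\varphi}$ with diagonal $(\varphi,\ok\varphi)$ and then applies $\ok^{-1}$ at the end; both are the same diagram chase, and your counit-naturality cross-check for the $(1,1)$-entry is a pleasant extra not present in the paper's terse version.
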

\begin{proof}
The proof is entirely analogous to that of Lemma \ref{msplaced2} (take $b=1$ in the proof), given the decompositions of $\overline{RM}$ and $\overline{RN}$ given by Proposition \ref{rk}.
\end{proof}
\begin{rmk}
We strongly suspect the morphisms $\alpha_\varphi$ and $\beta_\varphi$ appearing in the lemma are projective, but we have no proof of this.  It will turn out to not present a problem, however.
\end{rmk}

Let $M$ be a $kG$-module.  We have linear isomorphisms 
\[\send(RM)\cong\shom(RM,M)\cong\shom(M\oplus \ok^{-1}M,M),\]
and hence we may identify $\send(RM)$ with a summand of $\send(M\oplus \ok^{-1}M)$ in the obvious manner.

 Kawata \cite[Lemma 4.3]{kaw2} has shown that, when the ramification degree of $\O$ over $\O'$ is at least three, given a $kG$-homomorphism $\mu:\ok^{-1} M\r M$, there is an $\og$-endomorphism $\hat{\mu}$ of $RM$ such that 
\[\hat{\mu}\otimes k=\begin{pmatrix} 0 & \mu \\ 0 & 0 \end{pmatrix}\in \End_{kG}(M\oplus \ok^{-1}M).\]
The same proof works when the ramification degree is at least two by our Proposition \ref{rk}.  Further, since an endomorphism $f$ of $RM$ is projective if and only if $f\otimes k$ is projective, $\hat{\mu}$ is projective if and only if $\mu$ is projective.  Hence the morphisms $\hat{\mu}$ span a $\dim_k\shom(\ok^{-1}M,M)$-dimensional subspace of $\send(RM)$.  Lemma \ref{misplaced2}, on the other hand, determines a $\dim_k\send(M)$-dimensional subspace of $\send(RM)$.  These subspaces intersect trivially, and since $\dim_k\send(RM)=\dim_k\shom(M\oplus\ok^{-1}M,M)$, it follows that given $f\in\send(RM)$, we may write $f\otimes k\in\send(M\oplus\ok^{-1}M)$ uniquely as a sum $\begin{pmatrix}\varphi & 0 \\ \beta_\varphi & \ok^{-1}\varphi \end{pmatrix} + \begin{pmatrix} 0 & \mu \\ 0 & 0 \end{pmatrix}$ where $\beta\in\send(M)$ and $\mu\in\shom(\ok^{-1}M,M)$.  In particular, the component of $f$ mapping $M$ to $M$ and mapping $\ok^{-1}M$ to $\ok^{-1}M$ mutually determine one another (as maps in $\sG$).  

Before stating our next result, we recall a definition.  Let $\Lambda$ be a $k$-algebra, and $M$ a $(\Lambda\md\Lambda)$-bimodule.  The \emph{trivial extension of $\Lambda$ by $M$}, $T_M\Lambda$, is the $k$-algebra whose underlying vector space is simply the direct sum $\Lambda\oplus M$, and the multiplication in $T_M\Lambda$ is given by $(\lambda, m)(\lambda',m')=(\lambda\lambda',\lambda\cdot m'+m\cdot \lambda')$, where $\cdot$ denotes the appropriate actions of $\Lambda$ on $M$.  In the special case when $M$ is the $k$-linear dual of $\Lambda$, we simply write $T\Lambda$ instead of $T_M\Lambda$, and refer to this ring as the trivial extension algebra of $\Lambda$.

\begin{thm}\label{uphere}
Let $G$ be a group of order divisible by the rational prime $p$, let $\O'\subset\O$ be a ramified extension of complete discrete valuation rings, and let $M$ be a $kG$-module.  Then $\send(RM)\cong T\send(M)$.
\end{thm}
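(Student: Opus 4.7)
The plan is to construct an explicit algebra isomorphism $\Phi : T\send(M) \to \send(RM)$ starting from the vector-space identification $\send(RM) \cong \shom(\overline{RM}, M) = \send(M) \oplus \shom(\ok^{-1}M, M)$ recalled just before the theorem. First I would identify $\shom(\ok^{-1}M, M) \cong D\send(M)$ as an $(\send(M), \send(M))$-bimodule via Auslander-Reiten duality for the symmetric algebra $kG$: the standard formula $D\shom(X, Y) \cong \shom(Y, \ok X)$ applied with $X = Y = M$, followed by a suspension shift, produces the required isomorphism. The candidate map is $\Phi(\varphi, \xi) = R(\varphi) + \hat{\mu}_\xi$, where $\mu_\xi \in \shom(\ok^{-1}M, M)$ corresponds to $\xi \in D\send(M)$.

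The bulk of the work is verifying that $\Phi$ respects the algebra structures. For $f \leftrightarrow (\varphi_f, \mu_f)$ and $g \leftrightarrow (\varphi_g, \mu_g)$ in $\send(RM)$, the product $fg$ corresponds to matrix multiplication in $\End_{kG}(\overline{RM})$: by Lemma \ref{misplaced2} the reduction $\overline{g} = g \otimes k$ has the form $\begin{pmatrix}\varphi_g & \mu_g \\ \beta_g & \ok^{-1}\varphi_g\end{pmatrix}$, and a direct calculation yields $fg \leftrightarrow (\varphi_f\varphi_g + \mu_f\beta_g,\; \varphi_f\mu_g + \mu_f\ok^{-1}\varphi_g)$. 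Under the bimodule identification above, the second component is precisely $\varphi_f \cdot \xi_g + \xi_f \cdot \varphi_g$, which matches the trivial extension product. Moreover, $\hat\mu\hat\nu = 0$ follows since $(\hat\mu\hat\nu) \otimes k = 0$ by matrix multiplication, and the reduction $\send(RM) \to \send(\overline{RM})$ is injective by Lemma \ref{breakdownsucka} (as $\exp(RM) = \pi$ because $\overline{RM}$ is torsion of exponent $\pi$).

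The main obstacle is the unwanted summand $\mu_f\beta_g$ in the first component of $fg$, which must vanish stably for $\Phi$ to be an algebra map. This is precisely the issue flagged in the remark after Lemma \ref{misplaced2}: the $\beta_\varphi$ arising from the lift $R(\varphi)$ is suspected but unproven to be projective. The plan to handle this is to exploit the symmetry between the two summands of $\overline{RM}$ in Feit's construction (Lemma \ref{feity}) to build a ``transposed'' analogue of Kawata's $\hat\mu$: namely, for each $\delta \in \shom(M, \ok^{-1}M)$ an $\og$-endomorphism of $RM$ reducing to $\begin{pmatrix}0 & 0 \\ \delta & 0\end{pmatrix}$. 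Subtracting the appropriate such endomorphism from $R(\varphi)$ would then yield a stable representative with $\beta_\varphi \simeq 0$, after which $\Phi$ is a bijective algebra homomorphism realising $\send(RM) \cong T\send(M)$.
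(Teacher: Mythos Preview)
Your overall architecture matches the paper's: identify $\send(RM)$ with $\send(M)\oplus\shom(\ok^{-1}M,M)$ via the adjunction and Proposition~\ref{rk}, invoke Auslander--Reiten duality to recognise the second summand as $D\send(M)$, and then check multiplicativity by reducing mod $\pi$ and using Lemma~\ref{misplaced2}. The divergence is entirely in how you propose to kill the cross-term $\mu_f\beta_g$.

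Your plan for this step does not work. The ``transposed Kawata lifts'' you want --- endomorphisms $\check\delta\in\End_{\og}(RM)$ with $\check\delta\otimes k=\begin{pmatrix}0&0\\\delta&0\end{pmatrix}$ --- cannot exist for non-projective $\delta$. The discussion immediately before the theorem establishes that every element of the image of $\send(RM)\hookrightarrow\send(\overline{RM})$ is (stably) a sum $R(\varphi)\otimes k+\hat\mu\otimes k$; comparing the $(1,1)$-entries forces $\varphi\simeq 0$, hence $R(\varphi)\simeq 0$ and $\hat\mu\otimes k$ has zero $(2,1)$-entry, so $\delta\simeq 0$. Equivalently, by dimension count the image is already exhausted by the $R(\varphi)$'s and the $\hat\mu$'s, leaving no room for an independent lower-triangular family. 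So your proposed subtraction would prove that every $\beta_\varphi$ is projective --- exactly the statement the paper flags (in the remark after Lemma~\ref{misplaced2}) as suspected but unproven.

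The paper sidesteps this by proving only the weaker fact actually needed: that $\beta_\varphi\mu'$ and $\mu'\beta_\varphi$ are projective for all $\mu'$. Setting $f_1=\Phi^{-1}(0,\mu')$ (so $f_1\otimes k$ is strictly upper-triangular) and multiplying, one finds $(ff_1)\otimes k=\begin{pmatrix}0&\varphi\mu'\\0&\beta_\varphi\mu'\end{pmatrix}$. But $ff_1$ lies in $\send(RM)$, so by the same structural constraint its $(2,2)$-entry must be $\ok^{-1}$ of its $(1,1)$-entry; since the latter is zero, $\beta_\varphi\mu'\simeq 0$. The product $f_1f$ handles $\mu'\beta_\varphi$ symmetrically. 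This is the missing idea: use the diagonal rigidity of the image of $\send(RM)$ on the \emph{product}, rather than trying to straighten the individual factor $R(\varphi)$.
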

\begin{proof}
By the Auslander-Reiten duality for $kG$-modules, $\send(\ok^{-1}M,M)$ is naturally isomorphic to the dual of $\send(M)$ as an $(\send(M)\md\send(M))$-bimodule.  Here,  if $\varphi\in\send(M)$ and $\mu\in\send(\ok^{-1}M,M)$, then the left action of $\varphi$ on $\mu$ is just the composition $\varphi\mu$, whereas the right action is the composition $\mu\ok^{-1}\varphi$.  Hence $\send(RM)$ has the appropriate underlying vector space.  Let $\Phi:\send(RM)\r T\send(M)$ be the corresponding linear isomorphism, so that $\Phi(f)=(\varphi,\mu)$, where we have $f\otimes k=\begin{pmatrix} \varphi & \mu \\ \beta_\varphi & \ok^{-1}\varphi \end{pmatrix}$ (c.f. Lemma \ref{misplaced2}).  Let $f=\Phi^{-1}(\varphi,\mu)$ and $f_1=\Phi^{-1}(0,\mu')$.  Then 
\[(ff_1)\otimes k=\begin{pmatrix} 0 & \varphi\mu' \\ 0 & \beta_\varphi\mu' \end{pmatrix} \mbox{\ \ \ and \ \ \ } (f_1f)\otimes k=\begin{pmatrix} \mu'\beta_\varphi & \mu'\ok^{-1}\varphi \\ 0 & 0 \end{pmatrix}.\]
It follows in particular that $\beta_\varphi\mu'$ and $\mu'\beta_\varphi$ are projective (as $kG$-homomorphisms) for all $\mu'$ and all $\beta_\varphi$ by the preceding discussion.  Now let $f'=\Phi^{-1}(\varphi',\mu')$.  Then, as maps in $\sG$,
\begin{eqnarray*}
\Phi(ff') & = &(\varphi\varphi'+\beta_{\varphi'}\mu, \varphi \mu' +\mu\ok^{-1}(\varphi'))\\
          & = & (\varphi\varphi', \varphi \mu' +\mu\ok^{-1}(\varphi'))\\
          & = &(\varphi,\mu)(\varphi',\mu') \\
          & = & \Phi(f)\Phi(f'),
\end{eqnarray*}
and $\Phi$ is an algebra isomorphism.
\end{proof}
\begin{rmk}
We also believe the analogue of this result holds for the lattices $R(M_b)$ considered in Proposition \ref{aindec}.  The only additional result required for a proof is an analogue of Kawata's lemma \cite[Lemma 4.3]{kaw2}.  We suspect such an analogue can be deduced using just the adjunction $R$ (so we do not even need Kawata's proof for Theorem \ref{uphere}). 
\end{rmk}
We recover the main results of \cite{kaw2}.
\begin{cor} \label{werty}
Let $M$ be an indecomposable $kG$-module.  Then $\oo M$ is indecomposable.
\end{cor}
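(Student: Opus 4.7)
The proof should be essentially immediate from Theorem \ref{uphere} together with Proposition \ref{hirsuite}. My plan is to reduce the decomposability question for $\oo M$ to the locality of a stable endomorphism ring, which in turn will follow from the fact that a trivial extension of a local finite-dimensional $k$-algebra is local.

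First I would dispose of the projective case (where $\oo M = 0$) and assume $M$ is indecomposable and non-projective. Since $\Omega$ is a self-equivalence of both $\sG$ and $\slG$, and since (as noted after the definition of $R$) $RM \simeq \oo^{-1}(\oo M)$ in $\sG$, we have a ring isomorphism $\send(\oo M) \cong \send(RM)$. Applying Theorem \ref{uphere} then yields $\send(\oo M) \cong T\send(M)$.

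Next I would argue that $T\send(M)$ is a local $k$-algebra. Because $M$ is indecomposable, the finite-dimensional algebra $\End_{kG}(M)$ is local, and hence so is its quotient $\send(M)$ (which is non-zero because $M$ is not projective). For any finite-dimensional local $k$-algebra $\Lambda$, the trivial extension $T\Lambda = \Lambda \oplus D\Lambda$ has Jacobson radical $\rad(\Lambda) \oplus D\Lambda$ with residue $\Lambda/\rad(\Lambda)$, and so is itself local. Consequently $\send(\oo M)$ is a local ring, so its only idempotents are $0$ and $1$, which means $\oo M$ is indecomposable as an object of $\slG$.

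Finally I would lift this to indecomposability in $\latt\md\og$. By Proposition \ref{hirsuite} the lattice $\oo M$ is projective-free, so any hypothetical non-trivial direct sum decomposition $\oo M = A \oplus B$ with $A,B \neq 0$ in $\latt\md\og$ has both summands projective-free and hence non-zero in $\slG$, contradicting the indecomposability of $\oo M$ in the stable category. The main (and only real) obstacle is the verification that $T\send(M)$ is local, but this is a standard and short check on the radical of a trivial extension, so no serious difficulty is expected.
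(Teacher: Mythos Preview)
Your proposal is correct and follows essentially the same route as the paper: invoke Proposition~\ref{hirsuite} to see that $\oo M$ is projective-free, use $\send(\oo M)\cong\send(RM)$ together with Theorem~\ref{uphere} to identify the stable endomorphism ring with $T\send(M)$, and conclude locality. One minor slip: in the projective case $\oo M$ is not $0$ in the module category but rather $\pi P(M)\cong P(M)$, which is indecomposable; your disposal of that case should say this instead.
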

\begin{proof}
Recall that $\oo M$ is projective-free by Proposition \ref{hirsuite}.  Since trivial extension algebras of local algebras are themselves local, the result follows from Theorem \ref{uphere} since $\send(RM)\cong\send(\oo M)$.
\end{proof}
\begin{cor}
Let $\mathcal{A}(M):0\r \oo^2 M \r B \r \oo M\r0$ be the almost split sequence terminating in the Heller lattice $\oo M$ of an indecomposable non-projective $kG$-module $M$.  Then the reduced sequence $\mathcal{A}_k(M):0\r \oo^2 M\otimes k \r B\otimes k \r \oo M\otimes k\r 0$ is the sum of the almost split sequence terminating in $M$ and the split sequence $0\r \ok M\r \ok M\oplus \ok M \r \ok M\r 0$.
\end{cor}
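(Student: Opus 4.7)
The plan is to realize $\mathcal{A}(M)$ as a pullback, reduce modulo $\pi$, and split the resulting pullback diagram into the two claimed summands using the explicit form of $\overline{\phi}$ provided by the proof of Theorem \ref{uphere}.

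First I would recall that $\mathcal{A}(M)$ is the pullback of the projective cover $P(\oo M) \twoheadrightarrow \oo M$ along an almost projective $\phi \in \soc(\send(\oo M))$. Both rows of this pullback are $\O$-split (being sequences of lattices), so tensoring with $k$ preserves exactness and the pullback structure; the reduced top row is $\mathcal{A}_k(M)$. The reduced terms are identified via the adjunction: because $\oo M$ is projective-free (Proposition \ref{hirsuite}), $\oo M \simeq R(\ok M)$ and Proposition \ref{rk} gives $\overline{\oo M} \cong M \oplus \ok M$; the same argument applied to $\oo^2 M \simeq R(\ok^2 M)$ yields $\overline{\oo^2 M} \cong \ok^2 M \oplus \ok M$. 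Since projective covers commute with reduction modulo $\pi$ and respect direct sums of the head, $\overline{P(\oo M)} \cong P(M) \oplus P(\ok M)$ with the obvious cover map $p_M \oplus p_{\ok M}$.

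Next I would determine the form of $\overline{\phi}$. Under the isomorphism $\send(\oo M) \cong T\send(\ok M)$ from Theorem \ref{uphere}, the socle of the trivial extension is spanned by an element $(0,\mu_0)$, where $\mu_0$ generates the bimodule socle of $D\send(\ok M)$. By Auslander--Reiten duality for $kG$-modules, $D\send(\ok M) \cong \shom_{kG}(M, \ok M)$, and the socle generator $\mu_0$ is precisely the connecting map of the almost split sequence in $\mod kG$ terminating in $M$. From the form of the $\hat{\mu}$ elements introduced in the proof of Theorem \ref{uphere} (cf.\ Lemma \ref{misplaced2}), the reduction $\overline{\phi}: M \oplus \ok M \to M \oplus \ok M$ acts as $(y,x) \mapsto (0,\mu_0(y))$, that is, it factors as projection onto $M$ followed by $\mu_0$ followed by the inclusion of $\ok M$.

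This factorisation of $\overline{\phi}$ makes the pullback split into a direct sum of two simpler pullbacks. The pullback of $P(\ok M) \twoheadrightarrow \ok M$ along $\mu_0 : M \to \ok M$ is an extension $0 \to \ok^2 M \to Y_M \to M \to 0$ whose Ext class is $\mu_0$; by the identification above this is the almost split sequence in $\mod kG$ terminating in $M$. The pullback of $P(M) \twoheadrightarrow M$ along $0 : \ok M \to M$ degenerates to the split sequence $0 \to \ok M \to \ok M \oplus \ok M \to \ok M \to 0$. Assembling the two pullbacks recovers $\mathcal{A}_k(M)$ as the claimed direct sum.

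The main obstacle lies in the third paragraph: one must track the algebra isomorphism $\send(\oo M) \cong T\send(\ok M)$ carefully enough to extract the matrix form of $\overline{\phi}$, and then confirm that the resulting $\mu_0$ is (up to scalar) the AR-duality generator of $\shom_{kG}(M, \ok M)$. Once this identification is in hand, the splitting of the pullback and the recognition of each summand are routine.
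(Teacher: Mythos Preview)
Your proposal is correct and follows essentially the same route as the paper: identify the almost projective endomorphism of $\oo M$ as the socle element $(0,\mu)$ via Theorem~\ref{uphere}, reduce modulo $\pi$ to obtain the off-diagonal matrix $\begin{pmatrix}0&\mu\\0&0\end{pmatrix}$ on $\ok M\oplus M$, and then split the resulting pullback into the almost split sequence for $M$ and a split sequence. The only cosmetic difference is that you apply Theorem~\ref{uphere} to $\ok M$ (using $\oo M\simeq R(\ok M)$) whereas the paper uses $\send(\oo M)\cong\send(RM)$ directly; the resulting identifications coincide since $\send(\ok M)\cong\send(M)$, and your explicit decomposition of the pullback is exactly what the paper's ``and the result follows'' is hiding.
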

\begin{proof}
Using the notation from Theorem \ref{uphere} and its proof, it is clear that the socle of $\send(\oo M)$ is generated by $(0,\mu)$, where $\mu$ generates the simple $\send(M)$-socle of $\shom(M,\ok M)$.  Then $\mathcal{A}_k(M)$ is the pullback of $(0,\mu)\otimes k=\begin{pmatrix} 0 & \mu \\ 0 & 0 \end{pmatrix}\in \send(\ok M\oplus  M)$ along a projective cover $P\r \ok M \oplus M$, and the result follows.
\end{proof}

\large{University of Bristol, University Walk, Bristol, BS8 1TW, United Kingdom}

Email address: \tt{Andrew.Poulton@bristol.ac.uk}

\end{document}